\newcommand{\argmin}{\mathop{\rm argmin}\limits}
\newcommand{\dom}{\mathop{\rm dom}}
\newcommand{\interior}{\mathop{\rm int}}
\newcommand{\closure}{\mathop{\rm cl}}
\newcommand{\dist}{\mathrm{dist}}
\newcommand{\innerprod}[2]{\langle #1,#2 \rangle}
\theoremstyle{definition}
\newtheorem{theorem}{Theorem}[section]
\newtheorem{lemma}{Lemma}[section]
\newtheorem{definition}{Definition}[section]
\newtheorem{remark}{Remark}[section]
\newtheorem{example}{Example}[section]
\newtheorem{proposition}{Proposition}[section]
\newtheorem{assumption}{Assumption}[section]
\title{Proximal gradient-type method with generalized distance and convergence analysis without global descent lemma}
\author{Shotaro Yagishita\thanks{Risk Analysis Research Center, The Institute of Statistical Mathematics, Japan. E-mail: syagi@ism.ac.jp} \thanks{Center for Social Data Structuring, Joint Support-Center for Data Science Research, Japan}
\and
Masaru Ito\thanks{Department of Mathematics, College of Science and Technology, Nihon University, Japan. E-mail: ito.masaru@nihon-u.ac.jp}}
\date{\today}
\begin{document}

\maketitle

\begin{abstract}
We consider solving nonconvex composite optimization problems in which the sum of a smooth function and a nonsmooth function is minimized.
Many of convergence analyses of proximal gradient-type methods rely on global descent property between the smooth term and its proximal term.
On the other hand, the ability to efficiently solve the subproblem depends on the compatibility between the nonsmooth term and the proximal term.
Selecting an appropriate proximal term by considering both factors simultaneously is generally difficult.
We overcome this issue by providing convergence analyses for proximal gradient-type methods with general proximal terms, without requiring global descent property of the smooth term.
As a byproduct, new convergence results of the interior gradient methods for conic optimization are also provided.
\end{abstract}

\section{Introduction}\label{sec:intro}
We consider solving nonconvex composite optimization problems in which the sum of a smooth function $f$ and a nonsmooth function $g$ is minimized.
As many problems in machine learning, signal processing, and statistical inference can be formulated as them, the composite problems have garnered interest.

The proximal gradient method (PGM), which was originally introduced by \citet{fukushima1981generalized}, is a standard algorithm for such composite problems.
The PGM updates the sequence $\{x^k\}$ by
\begin{equation}
    x^{k+1}\in\argmin_x\left\{\innerprod{\nabla f(x^k)}{x}+\frac{L_k}{2}\|x-x^k\|^2+g(x)\right\}
\end{equation}
with an appropriate choice of $L_k>0$.
The squared norm plays a role of a proximal term, ensuring that the updated point remains close to the previous one.
Many convergence analyses of the PGM rely on the global descent lemma, which is implied by the Lipschitz continuity of $\nabla f$ (see, e.g., \citep{beck2017first}).
Namely, they assume the existence of $L>0$ such that
\begin{equation}
    f(x)\le f(y)+\innerprod{\nabla f(y)}{x-y}+\frac{L}{2}\|x-y\|^2
\end{equation}
holds for any $x$ and $y$.
As the establishment of the global descent lemma is a restrictive assumption, \citet{bolte2018first} have provided convergence analysis of the Bregman proximal gradient method (BPGM) for nonconvex composite problems without the global Lipschitz assumption.
The BPGM iterates
\begin{equation}
    x^{k+1}\in\argmin_x\left\{\innerprod{\nabla f(x^k)}{x}+L_kD_h(x,x^k)+g(x)\right\},
\end{equation}
where $D_h(x,y)\coloneqq h(x)-h(y)-\innerprod{\nabla h(y)}{x-y}$ is the Bregman divergence generated by a strictly convex function $h$.
That is, the BPGM exploits the Bregman divergence as a proximal term instead of the squared norm.
Their analysis requires the relative smoothness of $f$, which is an extension of the global descent lemma \citep{van2017forward,bauschke2017descent,lu2018relatively}.
The function $f$ is said to be $L$-smooth relative to $h$ if it holds that
\begin{equation}
    f(x)\le f(y)+\innerprod{\nabla f(y)}{x-y}+LD_h(x,y)
\end{equation}
for any $x$ and $y$.
Since the BPGM and the relative smoothness reduce to the PGM and the global descent lemma when $h(x)=\frac{1}{2}\|x\|^2$ is used as a kernel, the BPGM can be viewed as a generalization of the PGM.

On the other hand, for the BPGM to be applied efficiently, it is necessary that the solution of its subproblem can be computed easily.
Since $f$ is linearized in the subproblem, the compatibility between $g$ and the proximal term becomes important.
Thus, in order to ensure that $f$ satisfies the relative smoothness condition and that the subproblem can be solved easily, one must choose a proximal term that is well-suited to both $f$ and $g$.
Unfortunately, it is generally challenging to choose such a proximal term.

Recently, \citet{kanzow2022convergence} have established a convergence analysis of the PGM without relying on a global descent lemma, and several subsequent studies have followed \citep{de2022proximal,de2023proximal,jia2023convergence}.
Building on these developments, this paper makes the following contributions.

\begin{itemize}
    \item We develop a unified convergence framework for proximal gradient-type methods with proximal terms more general than the Bregman divergence, without assuming a global descent lemma.
    Under this framework, we establish subsequential convergence results (Theorems \ref{thm:global-convergence-F-stat} and \ref{thm:global-convergence-M-stat}) and a global convergence result under the Kurdyka--\L ojasiewicz property (Theorem \ref{thm:KL}).
    As a byproduct, our analysis also provides a convergence guarantee for the BPGM without requiring relative smoothness.
    \item Our analysis enables the proximal term to be designed solely based on its compatibility with $g$, without requiring compatibility with $f$, thereby substantially enlarging the class of admissible proximal structures.
    In Section \ref{sec:application}, we exploit this flexibility to develop proximal gradient-type methods beyond the BPGM framework and demonstrate that our theory directly yields convergence guarantees for such methods.
    These results are not merely illustrative; rather, they highlight classes of algorithms that could not be covered by existing analyses based on Bregman-type proximal terms or relative smoothness.
    As a particularly important consequence, we revisit interior gradient methods for conic optimization \citep{auslender2004interior,auslender2006interior} under the proposed convergence framework and establish convergence results in the nonconvex setting.
    To the best of our knowledge, this provides the first convergence analysis for interior gradient methods beyond convexity.
\end{itemize}

The rest of this paper is organized as follows.
The remainder of this section is devoted to related works, notation, and preliminary results.
In the next section, we introduce our proposed algorithm, the generalized variable distance proximal gradient method and show some convergence results, which includes an analysis in the presence of the Kurdyka--\L ojasiewicz property.
Section \ref{sec:application} is devoted to applications of our results.
Finally, Section \ref{sec:conclusion} concludes the paper with some remarks.

\subsection{Related works}
The convergence analysis of the proximal quasi-Newton-type method \citep{chen1997convergence,becker2012quasi,lee2014proximal}, which utilizes variable metrics at each iteration and is an extension of the PGM, without assuming the global descent lemma was provided by \citet{tseng2009coordinate} only in the case where $g$ is convex.
\citet{hua2016block} showed the convergence result of the BPGM using different Bregman divergences at each iteration.
\citet{bonettini2016variable} established the convergence analysis of the proximal gradient-type methods using generalized distances as the proximal term.
Although the results of \citet{hua2016block} and \citet{bonettini2016variable} also do not require any global descent assumption, do require the convexity of $g$.
The first analysis of the PGM that requires neither the convexity of $g$ nor the global descent lemma has been conducted by \citet{kanzow2022convergence}.
\citet{de2022proximal} and \citet{de2023proximal} have established such an analysis for variants of the PGM.
\citet{jia2023convergence} have provided convergence of whole sequence and rate of convergence for the PGM under the Kurdyka--\L ojasiewicz property and the local Lipschitz assumption.
Note that all of the above utilize the backtracking strategy to determine the stepsize.
Their proof techniques revisit classical arguments due to \citet{gafni1982convergence} and \citet{bertsekas1999nonlinear}, which we also follow.

\subsection{Notation and Preliminaries}
For a positive integer $n$, the set $[n]$ is defined by $[n]\coloneqq\{1,\ldots,n\}$.
Let $\mathbb{E}$ be a finite-dimensional inner product space endowed with an inner product $\innerprod{\cdot}{\cdot}$.
The induced norm is denoted by $\|\cdot\|$.
For a matrix $X\in\mathbb{R}^{m\times n}$, $\|X\|_1$ denotes the $\ell_1$ norm that is defined by $\|X\|_1\coloneqq\sum_{j=1}^m\sum_{j'=1}^n|X_{jj'}|$.
We denote the set of nonnegative real numbers and the set of positive numbers by $\mathbb{R}_+$ and $\mathbb{R}_{++}$, respectively.
For a subset $\mathcal{A}\subset\mathbb{E}$, its interior and its closure are denoted by $\interior\mathcal{A}$ and $\closure\mathcal{A}$, respectively.
We denote the closed ball with center $x\in\mathbb{E}$ and radius $\rho$ by $\mathcal{B}_\rho(x)$.
For $\mathcal{A}\subset\mathbb{E}$, $\delta_\mathcal{A}:\mathbb{E}\to\{0,\infty\}$ denotes the indicator function of $\mathcal{A}$.

Let $\phi:\mathbb{E}\to(-\infty,\infty]$ be a function.
The domain of $\phi$ is denoted by $\dom \phi\coloneqq\{x\in\mathbb{E}\mid\phi(x)<\infty\}$.
A function $\phi$ is said to be coercive if $\lim_{\|x\|\to\infty}\phi(x)=\infty$.
If $\lim_{\|x\|\to\infty}\phi(x)/\|x\|=\infty$, we say that $\phi$ is supercoercive.
For $x\in\dom\phi$,
\begin{equation}
    \widehat{\partial}\phi(x) \coloneqq \left\{g\in\mathbb{E}~\middle|~\liminf_{y\to x}\frac{\phi(y)-\phi(x)-\innerprod{g}{y-x}}{\|y-x\|}\ge0\right\}
\end{equation}
is called the Fr\'echet subdifferential of $\phi$ at $x$ and
\begin{equation}
    \partial\phi(x) \coloneqq \left\{g\in\mathbb{E}~\middle|~\exists\{x^k\},\{g^k\}~ \mbox{s.t.}~ x^k\to x,~ \phi(x^k)\to\phi(x),~ g^k\to g,~ g^k\in\widehat{\partial}\phi(x^k)\right\}
\end{equation}
is known as the Mordukhovich subdifferential of $\phi$ at $x$.
We call a point $x^*\in\dom\phi$ satisfying $0\in\widehat{\partial}\phi(x^*)$ (resp. $0\in\partial\phi(x^*)$) an F-stationary point (resp. M-stationary point) of $\min_{x\in\mathbb{E}}\phi(x)$.
If $\phi$ is of the form $\phi=\phi_1+\phi_2$ where $\phi_1$ is continuously differentiable, it holds that $\widehat{\partial}\phi(x)=\nabla\phi_1(x)+\widehat{\partial}\phi_2(x)$ and $\partial\phi(x)=\nabla\phi_1(x)+\partial\phi_2(x)$ for $x\in\dom\phi_2$ \citep[Exercise 8.8]{rockafellar2009variational}.
It is clear that $\widehat{\partial}\phi(x)\subset\partial\phi(x)$ holds, which implies that the F-stationarity is sharper than the M-stationarity.
However, since the M-stationarity is commonly used and popular, we consider the convergence for both.

The subderivative of $\phi$ at $x\in\dom\phi$ in direction $d$ is defined by
\begin{equation}\label{eq:subderivative}
    \phi'(x;d)\coloneqq\liminf_{\substack{\eta\searrow 0\\d'\to d}}\frac{\phi(x+\eta d')-\phi(x)}{\eta}=\liminf_{\substack{\eta\searrow 0\\d'\to d\\x+\eta d'\in\dom\phi}}\frac{\phi(x+\eta d')-\phi(x)}{\eta}.
\end{equation}
It immediately follows from \citep[Exercise 8.4]{rockafellar2009variational} that $x^*\in\dom\phi$ is an F-stationary point if and only if $\phi'(x^*;d)\ge0$ holds for all $d\in\mathcal{T}(x^*;\dom\phi)$, where $\mathcal{T}(x^*;\dom\phi)$ is the (Bouligand) tangent cone of $\dom\phi$ at $x^*$, that is,
\begin{align}
    \mathcal{T}(x^*;\dom\phi)\coloneqq
        \left\{d\in\mathbb{E}~\middle|~\exists\{d^k\},\{\eta_k\}~ \mbox{s.t.}~  x^*+\eta_k d^k\in\dom\phi,~ d^k\to d,~ \eta_k\searrow0\right\}.
\end{align}

The Kurdyka--\L ojasiewicz (KL) property is defined as follows.

\begin{definition}[\citep{attouch2010proximal,attouch2013convergence,bolte2014proximal}]\label{def:KL}
For a lower semicontinuous function $\Phi:\mathbb{E}\to(-\infty,\infty]$, we say that $\Phi$ has the KL property at $x^*\in\dom\partial \Phi$ if there exists a positive constant $\varpi$, a neighborhood $\mathcal{U}$ of $x^*$, and a continuous concave function $\chi:[0,\varpi)\to[0,\infty)$ that is continuously differentiable on $(0,\varpi)$ and satisfies $\chi(0)=0$ as well as $\chi'(t)>0$ on $(0,\varpi)$, such that
\begin{equation}
    \chi'(\Phi(x)-\Phi(x^*))~\dist(0,\partial \Phi(x))\ge1
\end{equation}
holds for all $x\in\mathcal{U}$ satisfying $\Phi(x^*)<\Phi(x)<\Phi(x^*)+\varpi$.
We refer to $\chi$ as the \emph{desingularization function}.
If the desingularization function is of the form $\chi(t)=ct^\theta$ where $c>0$ and $0<\theta\le1$, then we say that $\Phi$ has the KL property at $x^*$ with an exponent of $\theta$\footnote{While $1-\theta$ is more commonly referred to as the exponent, in this work we adopt $\theta$, following the convention of \citet{jia2023convergence}.}.
\end{definition}

The KL property is often used in the analysis of first-order methods to examine the convergence of the entire sequence and the convergence rate, and it is also used for this purpose in this paper.
Moreover, wide classes of functions admitting the KL property are known including semialgebraic or subanalytic ones (see, e.g., \cite{shiota1997,bolte2007KL,li2017KL} and references therein).
For instance, any subanalytic function $\Phi$ with a closed domain on which $\Phi$ is continuous satisfies the KL property at any $x^* \in \dom \partial\Phi$ for some exponent \cite[Theorem 3.1 and Remark 3.2]{bolte2007KL}.

We first introduce a distance-like function used as a proximal term.
The \emph{prox-grad distance} with respect to an open convex set $\mathcal{C}\subset\mathbb{E}$ is defined as follows.

\begin{definition}\label{def:prox-distance}
A nonnegative-valued function $D:\mathbb{E}\times\mathcal{C}\to[0,\infty]$ is called a prox-grad distance if the following conditions hold for any $y\in\mathcal{C}$:
\begin{enumerate}[(i)]
    \item $D(x,y)=0$ if and only if $x=y$;
    \item The function $D(\cdot,y)$ is a lower semicontinuous function with $\mathcal{C}\subset\dom D(\cdot,y)$;
    \item The function $D(\cdot,y)+\delta_{\closure\mathcal{C}}$ is supercoercive.
\end{enumerate}
\end{definition}

It is obvious that $D(x,y)=\frac{1}{2}\|x-y\|^2$ is a prox-grad distance with respect to an arbitrary open convex set $\mathcal{C}\subset\mathbb{E}$.
More generally, let us consider the Bregman divergence.
Let $h:\mathbb{E}\to(-\infty,\infty]$ be a lower semicontinuous strictly convex function being continuously differentiable on $\mathcal{C}$.
The Bregman divergence $D_h:\mathbb{E}\times\mathcal{C}\to[0,\infty]$ generated by $h$ is defined by
\begin{equation}\label{eq:Bregman-div}
    D_h(x,y)\coloneqq h(x)-h(y)-\innerprod{\nabla h(y)}{x-y}.
\end{equation}
If $h+\delta_{\closure\mathcal{C}}$ is supercoercive, the Bregman divergence \eqref{eq:Bregman-div} is a prox-grad distance with respect to $\mathcal{C}$.
The strong convexity of $h$, which is assumed in \citet{bolte2018first} for global convergence properties of the BPGM, implies the supercoerciveness of $h$.

The following notion is a generalization of the prox-boundedness (see, e.g., \citep[Definition 1.23]{rockafellar2009variational}) for the prox-grad distance.

\begin{definition}
We say that a function $\phi:\mathbb{E}\to(-\infty,\infty]$ is \emph{prox-bounded with respect to $D$} if there exists $\gamma>0$ such that $\phi+\gamma D(\cdot,y)$ is bounded from below on $\mathbb{E}$ for any $y\in\mathcal{C}$.
The infimum of the set of all such $\gamma$ is the threshold $\gamma_{\phi,D}$ of the prox-boundedness with respect to $D$ for $\phi$.
\end{definition}

The prox-boundedness with respect to $D(x,y)=\frac{1}{2}\|x-y\|^2$ coincides with the usual prox-boundedness.
The following proposition provides that a generalized proximal mapping is nonempty and compact.

\begin{proposition}\label{prop:nonemptyness-compactness}
Let $\phi:\mathbb{E}\to(-\infty,\infty]$ be lower semicontinuous and prox-bounded with respect to a prox-grad distance $D$ with threshold $\gamma_{\phi,D}$.
Suppose that $\dom\phi$ is included in $\closure\mathcal{C}$ and $\mathcal{C}\cap\dom\phi$ is nonempty.
Then, for any $y\in\mathcal{C},~ a\in\mathbb{E}$ and $\gamma>\gamma_{\phi,D}$,
\begin{equation}\label{eq:generalized-prox-map}
    \argmin_{x\in\mathbb{E}}\left\{\innerprod{a}{x}+\gamma D(x,y)+\phi(x)\right\}
\end{equation}
is nonempty and compact.
\end{proposition}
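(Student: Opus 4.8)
The plan is to show that the objective
\[
\psi(x)\coloneqq\innerprod{a}{x}+\gamma D(x,y)+\phi(x)
\]
is proper, lower semicontinuous, and coercive; the assertion then follows from the classical Weierstrass-type theorem, which guarantees that such a function attains its infimum and that the set of minimizers is compact.

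First I would verify properness and lower semicontinuity. Since $\mathcal{C}\subset\dom D(\cdot,y)$ by Definition \ref{def:prox-distance}(ii) and $\innerprod{a}{\cdot}$ is finite everywhere, we have $\dom\psi=\dom\phi\cap\dom D(\cdot,y)\supset\mathcal{C}\cap\dom\phi\ne\emptyset$, and $\psi>-\infty$ pointwise because $\phi>-\infty$ and $D\ge0$. Lower semicontinuity of $\psi$ is immediate: $\phi$ and $D(\cdot,y)$ are lower semicontinuous (the latter again by Definition \ref{def:prox-distance}(ii)), $\innerprod{a}{\cdot}$ is continuous, and a sum of lower semicontinuous functions none of which takes the value $-\infty$ is lower semicontinuous.

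The crux is coercivity. Fix $\gamma'$ with $\gamma_{\phi,D}<\gamma'<\gamma$; by prox-boundedness there is $M\in\mathbb{R}$ with $\phi(x)+\gamma'D(x,y)\ge-M$ for every $x$, hence
\[
\psi(x)\ge\innerprod{a}{x}+(\gamma-\gamma')D(x,y)-M
\]
for all $x$. Since $\dom\psi\subset\dom\phi\subset\closure\mathcal{C}$, it suffices to bound the right-hand side from below on $\closure\mathcal{C}$, where $D(x,y)=D(x,y)+\delta_{\closure\mathcal{C}}(x)$. Using $\innerprod{a}{x}\ge-\|a\|\,\|x\|$ and rewriting the bound for $x\ne0$ as
\[
\psi(x)\ge\|x\|\left((\gamma-\gamma')\frac{D(x,y)+\delta_{\closure\mathcal{C}}(x)}{\|x\|}-\|a\|\right)-M,
\]
the supercoercivity of $D(\cdot,y)+\delta_{\closure\mathcal{C}}$ from Definition \ref{def:prox-distance}(iii) forces the right-hand side to tend to $+\infty$ as $\|x\|\to\infty$. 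Thus $\psi$ is coercive, and in particular bounded below.

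Finally, combining properness, lower semicontinuity, and coercivity, $\psi$ attains its (finite) infimum, and every sublevel set of $\psi$ is closed and bounded, hence compact; consequently the set \eqref{eq:generalized-prox-map}, being the sublevel set of $\psi$ at its minimum value, is a nonempty closed subset of a compact set, hence nonempty and compact. The one point requiring genuine care is the interaction of the linear term $\innerprod{a}{x}$ with $D(\cdot,y)$ in the coercivity argument: this is precisely where condition (iii) of the prox-grad distance — supercoercivity rather than mere coercivity — is indispensable, since it is needed to absorb the linear term.
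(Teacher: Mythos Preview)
Your proof is correct and follows essentially the same approach as the paper: both split $\gamma D(\cdot,y)$ using an intermediate parameter in $(\gamma_{\phi,D},\gamma)$ (the paper uses the midpoint, you use an arbitrary $\gamma'$), invoke prox-boundedness to absorb $\phi$, and then use the supercoercivity condition in Definition~\ref{def:prox-distance}(iii) to dominate the linear term and conclude via the Weierstrass-type result. Your exposition is slightly more detailed on properness and lower semicontinuity, and your closing remark on why supercoercivity (rather than mere coercivity) is essential is a nice addition, but the argument is the same.
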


\begin{proof}
From the assumptions and Definition \ref{def:prox-distance}, the objective in the minimization problem \eqref{eq:generalized-prox-map} is proper and lower semicontinuous.
Let $\gamma^*\coloneqq(\gamma+\gamma_{\phi,D})/2 \in (\gamma_{\phi,D}, \gamma)$ and $l^*\in\mathbb{R}$ be a lower bound of $\phi+\gamma^*D(\cdot,y)$, then we have
\begin{align}
    \innerprod{a}{x}+\gamma D(x,y)+\phi(x) &\ge-\|a\|\|x\|+(\gamma-\gamma^*)D(x,y)+\delta_{\closure\mathcal{C}}(x)+l^*\\
    &=\|x\|\left\{\frac{(\gamma-\gamma^*)D(x,y)+\delta_{\closure\mathcal{C}}(x)}{\|x\|}-\|a\|\right\}+l^*,
\end{align}
and hence the objective is coercive by the supercoerciveness of $D(\cdot,y)+\delta_{\closure\mathcal{C}}$.
Thus, the set \eqref{eq:generalized-prox-map} is nonempty and compact (see, e.g., \citep[Theorem 1.9]{rockafellar2009variational}).
\end{proof}

\section{Generalized variable distance proximal gradient method}\label{sec:convergence}
We consider a proximal gradient-type algorithm for the following composite optimization problem
\begin{equation}\label{problem:general}
    \underset{x\in\mathbb{E}}{\mbox{minimize}} \quad F(x)\coloneqq f(x)+g(x),
\end{equation}
where $f:\mathbb{E}\to(-\infty,\infty]$ is continuously differentiable on $\interior\dom f$ and $g:\mathbb{E}\to(-\infty,\infty]$ is proper and lower semicontinuous.
Suppose that $F$ is bounded from below and lower semicontinuous.

Our proposed algorithm is a proximal gradient-type method using prox-grad distance with respect to an open convex set $\mathcal{C}$.
The algorithm is called \emph{generalized variable distance proximal gradient method} (for short GVDPGM) and summarized in Algorithm \ref{alg:GVDPGM}.
Our method allows the selection of different prox-grad distances at each iteration.
The stepsize at each iteration is determined by backtracking technique to satisfy average-type nonmonotone Armijo condition \citep{zhang2004nonmonotone}.

\begin{algorithm}[H]
\caption{GVDPGM for \eqref{problem:general}}
    \label{alg:GVDPGM}
    \begin{algorithmic}
    \STATE {\bfseries Input:} $x^0\in\mathcal{C}\cap\dom g,~ F_0=F(x^0),~ \beta>1,~ 0<\sigma<1,~ 0<p\le1$, and $k=0$.
    \REPEAT
    \STATE Choose a prox-grad distance $D_k$.
    \STATE Find the smallest $i\in\{0,1,2,\ldots\}$ s.t.
    \begin{equation}\label{eq:acceptance-criterion}
        F(x^{k,i})\le F_k-\sigma \beta^iD_k(x^{k,i},x^k)
    \end{equation}
    where
    \begin{equation}\label{eq:general-prox-map}
        x^{k,i}\in\argmin_{x\in\mathbb{E}}\left\{f(x^k)+\innerprod{\nabla f(x^k)}{x-x^k}+\beta^iD_k(x,x^k)+g(x)\right\}.
    \end{equation}
    \STATE Denote $i_k=i$ and choose $p_{k+1}\in[p,1]$.
    \STATE Set $x^{k+1}=x^{k,i_k}$, $F_{k+1}=p_{k+1}F(x^{k+1})+(1-p_{k+1})F_k$, and $k\leftarrow k+1$.
    \UNTIL Termination criterion is satisfied.
    \end{algorithmic}
\end{algorithm}

The GVDPGM reduces to the proximal quasi-Newton-type method when $D_k(x,y)=\frac{1}{2}\innerprod{x-y}{H_k(x-y)}$ with a symmetric positive definite operator $H_k:\mathbb{E}\to\mathbb{E}$.
If the Bregman divergence is used as a prox-grad distance, the GVDPGM coincides with the BPGM.

Below, we first provide the subsequential convergence of the GVDPGM and compare it with existing methods.
Then, we present a convergence analysis of the GVDPGM in the presence of the Kurdyka--\L ojasiewicz property.
The following assumptions are made throughout the paper.

\begin{assumption}\label{assume:well-definedness}
~
\begin{enumerate}[(i)]
    \item The function $f:\mathbb{E}\to(-\infty,\infty]$ is continuously differentiable on $\interior\dom f$, $g:\mathbb{E}\to(-\infty,\infty]$ is proper and lower semicontinuous, and $F$ is bounded from below and lower semicontinuous;
    \item It holds that $\mathcal{C}\subset\interior\dom f$, $\dom F\subset\interior\dom f$, $\dom g\subset\closure\mathcal{C}$, and $\mathcal{C}\cap\dom g\neq\emptyset$;
    \item The function $\nabla f$ is locally Lipschitz continuous on $\interior\dom f$, equivalently, $\nabla f$ is Lipschitz continuous on any compact subset of $\interior\dom f$;
    \item For all $k\ge0$, $g$ is prox-bounded with respect to $D_k$ with threshold $\gamma_{g,D_k}<1$;
    \item For all $k\ge0$ and $y\in\mathcal{C}$, there exist a positive number $\alpha$ and a neighborhood of $y$ such that $\alpha\|x-y\|^2\le D_k(x,y)$ holds for any $x$ in the neighborhood.
\end{enumerate}
\end{assumption}

The local Lipschitz condition in Assumption \ref{assume:well-definedness} is a mild assumption on $f$.
In fact, any twice continuously differentiable function has locally Lipschitz gradient.
Owing to the lower semicontinuity of $g$, nonemptyness of $\mathcal{C}\cap\dom g$, and Assumption \ref{assume:well-definedness} (iv), it follows from Proposition \ref{prop:nonemptyness-compactness} that
\begin{equation}
    \emptyset\neq\argmin_{x\in\mathbb{E}}\left\{f(x^k)+\innerprod{\nabla f(x^k)}{x-x^k}+\beta^iD_k(x,x^k)+g(x)\right\}\subset\dom g
\end{equation}
for all $k\ge0$ and $i\ge0$ whenever $x^k\in\mathcal{C}$.
Therefore, for the well definedness of the subproblem \eqref{eq:general-prox-map}, we assume the following.

\begin{assumption}\label{assume:inclusion}
For any $k\ge0$ and $i\ge0$,
\begin{equation}
    \argmin_{x\in\mathbb{E}}\left\{f(x^k)+\innerprod{\nabla f(x^k)}{x-x^k}+\beta^iD_k(x,x^k)+g(x)\right\}\subset\mathcal{C}.
\end{equation}
\end{assumption}

Assumption \ref{assume:inclusion} is automatically valid when $\mathcal{C}=\mathbb{E}$.
When $\mathcal{C}\subsetneq\mathbb{E}$, Assumption \ref{assume:inclusion} can be satisfied by appropriately choosing the prox-grad distance (see Section \ref{subsec:IGM}).
Under Assumptions \ref{assume:well-definedness} and \ref{assume:inclusion}, the well definedness of Algorithm \ref{alg:GVDPGM} is established as follows.

\begin{lemma}
Suppose that Assumptions \ref{assume:well-definedness} and \ref{assume:inclusion} hold.
Then, the number of inner loop in Algorithm \ref{alg:GVDPGM} is finite in each iteration $k$.
\end{lemma}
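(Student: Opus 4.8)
The plan is to argue by induction on $k$, the routine part of which maintains two invariants: $x^k\in\mathcal{C}\cap\dom g$ and $F_k\ge F(x^k)$. The first follows from the input condition $x^0\in\mathcal{C}\cap\dom g$ together with Proposition~\ref{prop:nonemptyness-compactness} (giving $x^{k,i_k}\in\dom g$) and Assumption~\ref{assume:inclusion} (giving $x^{k,i_k}\in\mathcal{C}$). For the second, once \eqref{eq:acceptance-criterion} holds at step $k$ one gets $F(x^{k+1})=F(x^{k,i_k})\le F_k$, and since $p_{k+1}\in[p,1]$ makes $F_{k+1}$ a convex combination of $F(x^{k+1})$ and $F_k$, we get $F_{k+1}\ge F(x^{k+1})$. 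Granting these invariants, it suffices to prove that, for a generic iteration with $x^k\in\mathcal{C}\cap\dom g$ and $F_k\ge F(x^k)$, the acceptance criterion \eqref{eq:acceptance-criterion} holds for all sufficiently large $i$; then the smallest such $i$ is finite.

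The core of the argument is to show $x^{k,i}\to x^k$ as $i\to\infty$. First I would substitute $x=x^k$ into the subproblem \eqref{eq:general-prox-map} to obtain the optimality inequality
\begin{equation}\label{eq:plan-opt}
\innerprod{\nabla f(x^k)}{x^{k,i}-x^k}+\beta^iD_k(x^{k,i},x^k)+g(x^{k,i})\le g(x^k).
\end{equation}
Since $g$ is prox-bounded with respect to $D_k$ with threshold strictly below $1$ (Assumption~\ref{assume:well-definedness}(iv)), the function $g+D_k(\cdot,x^k)$ is bounded below, so \eqref{eq:plan-opt} gives a bound of the form $(\beta^i-1)\,D_k(x^{k,i},x^k)\le C+\|\nabla f(x^k)\|\,\|x^{k,i}-x^k\|$ with $C$ independent of $i$. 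Because $x^{k,i}\in\dom g\subset\closure\mathcal{C}$ while $D_k(\cdot,x^k)+\delta_{\closure\mathcal{C}}$ is supercoercive (Definition~\ref{def:prox-distance}(iii)), dividing by $\|x^{k,i}\|$ and letting $\|x^{k,i}\|\to\infty$ would force the left side to infinity but keeps the right side bounded, a contradiction; hence $\{x^{k,i}\}_i$ is bounded. Feeding the resulting uniform bound on $\|x^{k,i}-x^k\|$ back in and using $\beta^i\to\infty$ yields $D_k(x^{k,i},x^k)\to 0$. Finally, for any limit point $\bar x$ of $\{x^{k,i}\}_i$, lower semicontinuity of $D_k(\cdot,x^k)$ (Definition~\ref{def:prox-distance}(ii)) and nonnegativity give $D_k(\bar x,x^k)=0$, so $\bar x=x^k$ by Definition~\ref{def:prox-distance}(i); boundedness then promotes this to $x^{k,i}\to x^k$.

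Once convergence is established the rest is routine. Since $x^k\in\mathcal{C}\subset\interior\dom f$, choose $\rho>0$ with $\mathcal{B}_\rho(x^k)\subset\interior\dom f$; by Assumption~\ref{assume:well-definedness}(iii), $\nabla f$ is $L$-Lipschitz on $\mathcal{B}_\rho(x^k)$ for some $L>0$, so the descent lemma gives $f(x)\le f(x^k)+\innerprod{\nabla f(x^k)}{x-x^k}+\tfrac{L}{2}\|x-x^k\|^2$ there. For $i$ large, $x^{k,i}$ lies in $\mathcal{B}_\rho(x^k)$ and in the neighborhood from Assumption~\ref{assume:well-definedness}(v), whence $\|x^{k,i}-x^k\|^2\le\tfrac1\alpha D_k(x^{k,i},x^k)$. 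Adding $g(x^{k,i})$ to the descent inequality, substituting \eqref{eq:plan-opt}, and using this quadratic minorant gives
\begin{equation}
F(x^{k,i})\le F(x^k)-\Bigl(\beta^i-\tfrac{L}{2\alpha}\Bigr)D_k(x^{k,i},x^k)\le F_k-\Bigl(\beta^i-\tfrac{L}{2\alpha}\Bigr)D_k(x^{k,i},x^k),
\end{equation}
and since $\beta>1$ we have $(1-\sigma)\beta^i\ge\tfrac{L}{2\alpha}$ for all large $i$, which yields \eqref{eq:acceptance-criterion}; the degenerate case $D_k(x^{k,i},x^k)=0$, i.e.\ $x^{k,i}=x^k$, is immediate from $F_k\ge F(x^k)$. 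The main obstacle is the convergence claim $x^{k,i}\to x^k$: because the prox-grad distance is assumed only lower semicontinuous, positive definite, and supercoercive-modulo-indicator, with no continuity and only a \emph{local} quadratic minorant (Assumption~\ref{assume:well-definedness}(v)), one cannot argue directly and must chain prox-boundedness, supercoercivity, and lower semicontinuity — first to bound $\{x^{k,i}\}_i$ and then to drive the whole sequence to $x^k$.
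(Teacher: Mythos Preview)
Your proposal is correct and follows essentially the same approach as the paper: the same optimality inequality, the same use of prox-boundedness plus supercoercivity to force $x^{k,i}\to x^k$, and the same local descent lemma combined with Assumption~\ref{assume:well-definedness}(v) to verify \eqref{eq:acceptance-criterion}. The only differences are organizational—you make the induction and the invariant $F_k\ge F(x^k)$ explicit up front, and you establish convergence via ``boundedness $\Rightarrow D_k\to 0 \Rightarrow$ limit identification'' rather than arguing directly on subsequences of $\|x^{k,i}-x^k\|$ as the paper does.
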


\begin{proof}
Let $\{x^{k,i}\}$ be a sequence generated by \eqref{eq:general-prox-map}.
From the optimality of $x^{k,i}$, we have
\begin{equation}\label{eq:optimality-inner}
    \innerprod{\nabla f(x^k)}{x^{k,i}-x^k}+\beta^iD_k(x^{k,i},x^k)+g(x^{k,i})\le g(x^k).
\end{equation}
Using Assumption \ref{assume:well-definedness} (iv), we obtain
\begin{equation}\label{eq:linear-bound-inner}
    -\|\nabla f(x^k)\|\|x^{k,i}-x^k\|+(\beta^i-\gamma^*)D_k(x^{k,i},x^k)\le g(x^k)-l^*,
\end{equation}
where $\gamma^*\coloneqq(1+\gamma_{g,D_k})/2 \in (\gamma_{g,D_k}, 1)$ and $l^*\in\mathbb{R}$ is a lower bound of $g+\gamma^*D_k(\cdot,x^k)$.
Suppose that there exists a subsequence of $\{\|x^{k,i}-x^k\|\}$ converging to $c\in\mathbb{R}_{++}$, that is, $\|x^{k,i}-x^k\|\to_{I}c$, then $\{x^{k,i}\}_I$ is bounded.
Thus, since there is an accumulation point of $\{x^{k,i}\}_I$, which we denote by $x^*$, we see from the lower semicontinuity that $\liminf_{i\to_{I'}\infty}D_k(x^{k,i},x^k)\ge D_k(x^*,x^k)>0$ for some infinite set $I'\subset I$.
This and \eqref{eq:linear-bound-inner} imply
\begin{equation}
    -\|\nabla f(x^k)\|c+\infty\le g(x^k)-l^*,
\end{equation}
which is a contradiction.
On the other hand, if $\|x^{k,i}-x^k\|\to_{I}\infty$ for some infinite index set $I$, then a contradiction is derived from \eqref{eq:linear-bound-inner} and the supercoerciveness of $D_k(\cdot,x^k)+\delta_{\closure\mathcal{C}}$.
Consequently, $\{\|x^{k,i}-x^k\|\}$ converges to $0$.
From \eqref{eq:optimality-inner} and Assumptions \ref{assume:well-definedness} (i), (iii) and (v), it holds that
\begin{align}
    F(x^{k,i}) &=f(x^{k,i})+g(x^{k,i})\\
    &\le f(x^k)+\innerprod{\nabla f(x^k)}{x^{k,i}-x^k}+\frac{L_f}{2}\|x^{k,i}-x^k\|^2+g(x^{k,i})\\
    &\le f(x^k)+g(x^k)-\beta^iD_k(x^{k,i},x^k)+\frac{L_f}{2}\|x^{k,i}-x^k\|^2\\
    &\le F(x^k)-\left(1-\frac{L_f}{2\alpha\beta^i}\right)\beta^iD_k(x^{k,i},x^k)
\end{align}
for all sufficiently large $i$, where $L_f$ is the Lipschitz modulus of $\nabla f$ on a neighborhood of $x^k$.
Since $F_0=F(x^0)$ and it follows from the acceptance criterion \eqref{eq:acceptance-criterion} at the previous iteration that
\begin{equation}\label{eq:lower-bound}
    F_k\ge p_kF(x^k)+(1-p_k)\{F(x^k)+\sigma\beta^{i_{k-1}}D_{k-1}(x^k,x^{k-1})\}\ge F(x^k)
\end{equation}
for $k\ge1$, the acceptance criterion \eqref{eq:acceptance-criterion} holds for all sufficiently large $i$.
\end{proof}

The following properties hold for the GVDPGM.

\begin{proposition}\label{prop:property-GVDPGM}
Let $\{x^k\}$ be a sequence generated by Algorithm \ref{alg:GVDPGM} and suppose that Assumptions \ref{assume:well-definedness} and \ref{assume:inclusion} hold.
Then the following assertions hold:
\begin{enumerate}[(i)]
    \item The sequence $\{F_k\}$ is monotonically nonincreasing and bounded from below by $\inf_{x\in\mathbb{E}}F(x)$.
    In particular, it holds that
    \begin{equation}
        F(x^{k+1})\le F_{k+1}\le F_k-p\sigma\beta^{i_k}D_k(x^{k+1},x^k)
    \end{equation}
    for all $k\ge0$;
    \item The sequences $\{F_k\}$ and $\{F(x^k)\}$ converge to a same finite value;
    \item The sequence $\{x^k\}$ is included in the lower level set $\{x\in\mathbb{E}\mid F(x)\le F(x^0)\}\subset\dom F$;
    \item It holds that $\sum_{k=0}^\infty D_k(x^{k+1},x^k)\le\sum_{k=0}^\infty\beta^{i_k}D_k(x^{k+1},x^k)<\infty$, and hence $D_k(x^{k+1},x^k)\to0$ and $\beta^{i_k}D_k(x^{k+1},x^k)\to0$ hold.
\end{enumerate}
\end{proposition}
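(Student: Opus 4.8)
The plan is to derive all four assertions from the acceptance criterion \eqref{eq:acceptance-criterion} together with the averaging rule $F_{k+1}=p_{k+1}F(x^{k+1})+(1-p_{k+1})F_k$, essentially by telescoping. First I would establish (i). Rearranging the acceptance criterion at iteration $k$ gives $F(x^{k+1})\le F_k-\sigma\beta^{i_k}D_k(x^{k+1},x^k)\le F_k$ since $D_k\ge0$; substituting this into the definition of $F_{k+1}$ and using $p_{k+1}\in[p,1]$ and $D_k\ge0$ yields
\begin{equation}
F_{k+1}=p_{k+1}F(x^{k+1})+(1-p_{k+1})F_k\le F_k-p_{k+1}\sigma\beta^{i_k}D_k(x^{k+1},x^k)\le F_k-p\sigma\beta^{i_k}D_k(x^{k+1},x^k),
\end{equation}
so $\{F_k\}$ is nonincreasing and the ``in particular'' inequality holds. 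The remaining estimate $F(x^{k+1})\le F_{k+1}$ is exactly \eqref{eq:lower-bound} from the proof of the previous lemma (for $k\ge1$, and trivially for $k=0$ because $F_0=F(x^0)$); together with the assumed lower boundedness of $F$ this gives $F_k\ge F(x^k)\ge\inf_{x\in\mathbb{E}}F(x)$ for all $k$.

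For (ii), since $\{F_k\}$ is nonincreasing and bounded from below by (i), it converges to some finite value $F^\ast$, and in particular $F_k-F_{k-1}\to0$. Solving the averaging identity for $F(x^k)$ gives $F(x^k)=F_{k-1}+(F_k-F_{k-1})/p_k$; using $p_k\ge p>0$ we obtain $|(F_k-F_{k-1})/p_k|\le|F_k-F_{k-1}|/p\to0$, whence $F(x^k)\to F^\ast$ as well, so $\{F_k\}$ and $\{F(x^k)\}$ share the finite limit $F^\ast$.

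Assertion (iii) is then immediate: iterating the monotonicity of $\{F_k\}$ and using $F(x^k)\le F_k$ gives $F(x^k)\le F_0=F(x^0)$ for all $k$, so $\{x^k\}$ lies in the level set $\{x\mid F(x)\le F(x^0)\}$, which is contained in $\dom F$ because any point with $F(x)\le F(x^0)<\infty$ belongs to $\dom F$. Finally, for (iv) I would telescope the ``in particular'' inequality of (i): for every $N$,
\begin{equation}
p\sigma\sum_{k=0}^{N}\beta^{i_k}D_k(x^{k+1},x^k)\le\sum_{k=0}^{N}\bigl(F_k-F_{k+1}\bigr)=F_0-F_{N+1}\le F_0-\inf_{x\in\mathbb{E}}F(x)<\infty.
\end{equation}
Letting $N\to\infty$ shows $\sum_{k=0}^\infty\beta^{i_k}D_k(x^{k+1},x^k)<\infty$; since $\beta>1$ gives $\beta^{i_k}\ge1$ and $D_k\ge0$, also $\sum_{k=0}^\infty D_k(x^{k+1},x^k)\le\sum_{k=0}^\infty\beta^{i_k}D_k(x^{k+1},x^k)<\infty$, and convergence of these series forces $D_k(x^{k+1},x^k)\to0$ and $\beta^{i_k}D_k(x^{k+1},x^k)\to0$.

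The argument is largely bookkeeping, and I do not anticipate a serious obstacle; the only point that needs a little care is (ii), where the uniform lower bound $p_k\ge p>0$ on the averaging weights is essential to transfer convergence of $\{F_k\}$ to $\{F(x^k)\}$ — without it the nonmonotone averaging could in principle prevent $F(x^k)$ from settling down even though $\{F_k\}$ converges.
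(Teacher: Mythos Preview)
Your proposal is correct and follows essentially the same route as the paper's own proof: both derive the key inequality in (i) by substituting the acceptance criterion into the averaging rule and invoking $p_{k+1}\ge p$, appeal to \eqref{eq:lower-bound} for $F(x^k)\le F_k$, solve the averaging identity for $F(x^k)$ to handle (ii), and telescope for (iv). The only cosmetic difference is that the paper phrases (ii) as a sandwich $F_k\ge F(x^k)\ge F_{k-1}+(F_k-F_{k-1})/p$ rather than your absolute-value bound, but the content is identical.
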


\begin{proof}
Using the acceptance criterion \eqref{eq:acceptance-criterion}, we have
\begin{equation}\label{eq:monotonicity-GVDPGM}
    F_{k+1}\le p_{k+1}\{F_k-\sigma\beta^{i_k}D_k(x^{k+1},x^k)\}+(1-p_{k+1})F_k\le F_k-p\sigma\beta^{i_k}D_k(x^{k+1},x^k).
\end{equation}
The lower bound is obtained as in \eqref{eq:lower-bound}.
Since the sequence $\{F_k\}$ is monotonically nonincreasing and bounded from below, $\{F_k\}$ converges to a finite value.
On the other hand, it follows from the definition and monotonicity of $F_k$ that
\begin{equation}
    F_k\ge F(x^k)=F_{k-1}+\frac{F_k-F_{k-1}}{p_k}\ge F_{k-1}+\frac{F_k-F_{k-1}}{p},
\end{equation}
which implies that $\{F(x^k)\}$ converges to the same limit as $\{F_k\}$.
As $\{F_k\}$ is monotonically nonincreasing, one has $F(x^k)\le F_k\le F_0=F(x^0)$.
Summing \eqref{eq:monotonicity-GVDPGM} from $k=0$ to $k'$ yields
\begin{equation}
    p\sigma\sum_{k=0}^{k'} D_k(x^{k+1},x^k)\le p\sigma\sum_{k=0}^{k'}\beta^{i_k}D_k(x^{k+1},x^k)\le F_0-F_{k'+1}\le F_0-\inf_{x\in\mathbb{E}}F(x)<\infty,
\end{equation}
which implies the last assertion.
\end{proof}

To prove subsequential convergence, we additionally make the following assumption.

\begin{assumption}\label{assume:local-error-bound}
For all $z\in\closure\mathcal{C}$, there exist a positive numbers $\alpha',~ \nu\le1$, and a neighborhood $\mathcal{N}_z$ of $z$ such that $\alpha'\|x-y\|^{1+\nu}\le D_k(x,y)$ holds for any $x\in\mathcal{C}$, $y\in\mathcal{N}_z\cap\mathcal{C}$, and $k\ge0$.
\end{assumption}

We note that Assumption \ref{assume:local-error-bound} implies Assumption \ref{assume:well-definedness} (v).
Lemma \ref{lem:boundedness-GVDPGM} plays a central role in our analysis.

\begin{lemma}\label{lem:boundedness-GVDPGM}
Let $\{x^k\}$ be a sequence generated by Algorithm \ref{alg:GVDPGM}.
Suppose that Assumptions \ref{assume:well-definedness} to \ref{assume:local-error-bound} hold.
Let $\{x^k\}_K$ be a subsequence of $\{x^k\}$ converging to some point $x^*$.
Then $\{\beta^{i_k}\}_K$ is bounded and $\|x^{k+1}-x^k\|\to_K0$.
\end{lemma}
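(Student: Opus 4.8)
The plan is to argue by contradiction: suppose $\{\beta^{i_k}\}_K$ is unbounded, so along a further subsequence (still denoted $K$) we have $\beta^{i_k}\to\infty$, hence $i_k\ge 1$ for large $k\in K$. The point of taking $i_k\ge 1$ is that the backtracking rule then guarantees that the \emph{previous} trial point $x^{k,i_k-1}$ \emph{failed} the acceptance criterion \eqref{eq:acceptance-criterion}, i.e.
\begin{equation}
F(x^{k,i_k-1})> F_k-\sigma\beta^{i_k-1}D_k(x^{k,i_k-1},x^k)\ge F(x^k)-\sigma\beta^{i_k-1}D_k(x^{k,i_k-1},x^k),
\end{equation}
using $F_k\ge F(x^k)$ from \eqref{eq:lower-bound}. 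I would write $\widetilde{x}^k\coloneqq x^{k,i_k-1}$ and $\widetilde{\beta}_k\coloneqq\beta^{i_k-1}$, so this reads $F(\widetilde{x}^k)-F(x^k)>-\sigma\widetilde{\beta}_k D_k(\widetilde{x}^k,x^k)$. On the other hand the optimality \eqref{eq:optimality-inner} of $\widetilde{x}^k$ for the subproblem with coefficient $\widetilde{\beta}_k$ gives
\begin{equation}
\innerprod{\nabla f(x^k)}{\widetilde{x}^k-x^k}+\widetilde{\beta}_k D_k(\widetilde{x}^k,x^k)+g(\widetilde{x}^k)\le g(x^k).
\end{equation}

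Next I would establish $\|\widetilde{x}^k-x^k\|\to_K 0$. Since $x^k\to_K x^*$ and $x^*\in\dom F\subset\interior\dom f$ (by Proposition \ref{prop:property-GVDPGM}(iii) and Assumption \ref{assume:well-definedness}(ii)), all $x^k$ for large $k\in K$ lie in a fixed compact neighborhood of $x^*$ inside $\interior\dom f$, on which $\nabla f$ is bounded, say by $M$. Combining the optimality inequality with Assumption \ref{assume:local-error-bound} applied at $z=x^*$ (valid since $x^k\in\mathcal{C}$ eventually, and $\widetilde{x}^k\in\mathcal{C}$ by Assumption \ref{assume:inclusion}) and with the lower bound $l^*$ of $g+\gamma^* D_k(\cdot,x^k)$ from Assumption \ref{assume:well-definedness}(iv), exactly as in \eqref{eq:linear-bound-inner}, yields
\begin{equation}
-M\|\widetilde{x}^k-x^k\|+(\widetilde{\beta}_k-\gamma^*)\alpha'\|\widetilde{x}^k-x^k\|^{1+\nu}\le g(x^k)-l^*.
\end{equation}
Here $g(x^k)\le F(x^k)-f(x^k)$ is bounded above on the compact set, so the right-hand side is bounded; since $\widetilde{\beta}_k\to\infty$ while the $\|\cdot\|$ and $\|\cdot\|^{1+\nu}$ terms grow at most polynomially, the coefficient $(\widetilde{\beta}_k-\gamma^*)\alpha'$ forces $\|\widetilde{x}^k-x^k\|\to_K 0$ (if it did not, either a positive-limit subsequence or an unbounded subsequence of $\|\widetilde{x}^k-x^k\|$ would blow up the left-hand side, a contradiction).

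Now I would derive the contradiction by a descent-lemma estimate along $\widetilde{x}^k$. With $L_f$ the Lipschitz modulus of $\nabla f$ on the fixed compact neighborhood, and using that $\widetilde{x}^k$ stays in that neighborhood eventually,
\begin{align}
F(\widetilde{x}^k)
&\le f(x^k)+\innerprod{\nabla f(x^k)}{\widetilde{x}^k-x^k}+\frac{L_f}{2}\|\widetilde{x}^k-x^k\|^2+g(\widetilde{x}^k)\\
&\le f(x^k)+g(x^k)-\widetilde{\beta}_k D_k(\widetilde{x}^k,x^k)+\frac{L_f}{2}\|\widetilde{x}^k-x^k\|^2\\
&\le F(x^k)-\Bigl(\widetilde{\beta}_k-\frac{L_f}{2\alpha'}\|\widetilde{x}^k-x^k\|^{1-\nu}\Bigr)D_k(\widetilde{x}^k,x^k),
\end{align}
where the last step uses Assumption \ref{assume:local-error-bound} in the form $\|\widetilde{x}^k-x^k\|^{1+\nu}\le D_k(\widetilde{x}^k,x^k)/\alpha'$, hence $\|\widetilde{x}^k-x^k\|^2\le \alpha'^{-1}\|\widetilde{x}^k-x^k\|^{1-\nu}D_k(\widetilde{x}^k,x^k)$. (When $\nu=1$ the factor $\|\widetilde{x}^k-x^k\|^{1-\nu}$ is just $1$; when $\nu<1$ it tends to $0$.) Since $\widetilde{\beta}_k\to\infty$ and $\|\widetilde{x}^k-x^k\|^{1-\nu}$ is bounded, for large $k\in K$ we get $\widetilde{\beta}_k-\frac{L_f}{2\alpha'}\|\widetilde{x}^k-x^k\|^{1-\nu}\ge\sigma\widetilde{\beta}_k$, so $F(\widetilde{x}^k)\le F(x^k)-\sigma\widetilde{\beta}_k D_k(\widetilde{x}^k,x^k)$. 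This contradicts the failure inequality $F(\widetilde{x}^k)>F(x^k)-\sigma\widetilde{\beta}_k D_k(\widetilde{x}^k,x^k)$ above. Hence $\{\beta^{i_k}\}_K$ is bounded. Finally, boundedness of $\{\beta^{i_k}\}_K$ together with Proposition \ref{prop:property-GVDPGM}(iv) (which gives $\beta^{i_k}D_k(x^{k+1},x^k)\to 0$, so $D_k(x^{k+1},x^k)\to 0$) and Assumption \ref{assume:local-error-bound} at $z=x^*$ (giving $\alpha'\|x^{k+1}-x^k\|^{1+\nu}\le D_k(x^{k+1},x^k)$ for $k\in K$ large, since $x^k\in\mathcal{N}_{x^*}\cap\mathcal{C}$) yields $\|x^{k+1}-x^k\|\to_K 0$.

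The main obstacle is handling the interaction between the unknown exponent $\nu\le 1$ and the descent estimate: one must convert the quadratic remainder $\frac{L_f}{2}\|\widetilde{x}^k-x^k\|^2$ from the local descent lemma into something dominated by $\widetilde{\beta}_k D_k(\widetilde{x}^k,x^k)$ using only the lower bound $D_k\ge\alpha'\|\cdot\|^{1+\nu}$, which forces the $\|\widetilde{x}^k-x^k\|^{1-\nu}$ factor and hence makes the prior step $\|\widetilde{x}^k-x^k\|\to_K 0$ essential (so that this factor stays bounded, indeed $\le 1$ eventually when $\nu<1$). A secondary subtlety is justifying that $x^k$, $\widetilde{x}^k$, and $x^{k+1}$ all lie in one fixed compact neighborhood of $x^*$ contained in $\interior\dom f\cap\mathcal{C}$ so that the single Lipschitz constant $L_f$ and gradient bound $M$ apply uniformly along $K$; this uses $x^k\to_K x^*$, the inclusion $\dom F\subset\interior\dom f$, openness of $\mathcal{C}$ and $\interior\dom f$, Assumption \ref{assume:inclusion}, and the two smallness facts $\|\widetilde{x}^k-x^k\|\to_K 0$ and $\|x^{k+1}-x^k\|\to_K 0$ (the latter following a posteriori once $\{\beta^{i_k}\}_K$ is bounded).
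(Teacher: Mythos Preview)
Your proposal is correct and follows essentially the same route as the paper's proof: argue by contradiction, use the failed backtracking step $\hat{x}^k=x^{k,i_k-1}$, first establish $\|\hat{x}^k-x^k\|\to_K 0$ via the subproblem optimality combined with Assumption \ref{assume:local-error-bound}, and then combine the local descent estimate with Assumption \ref{assume:local-error-bound} to contradict the failure of \eqref{eq:acceptance-criterion}. The only cosmetic difference is in the final algebra: you bound $\|\hat{x}^k-x^k\|^2\le \alpha'^{-1}\|\hat{x}^k-x^k\|^{1-\nu}D_k(\hat{x}^k,x^k)$ and contradict the Armijo failure directly, whereas the paper uses $\|\hat{x}^k-x^k\|^2\le\|\hat{x}^k-x^k\|^{1+\nu}$ (valid once $\|\hat{x}^k-x^k\|\le 1$) and cancels $\|\hat{x}^k-x^k\|^2\neq 0$ to obtain the bound $\alpha'(1-\sigma)\beta^{i_k-1}\le L_f/2$.
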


\begin{proof}
It follows from Proposition \ref{prop:property-GVDPGM} (iii) and the lower semicontinuity of $F$ that $x^*\in\dom F\subset\closure\mathcal{C}$.
To derive a contradiction, we suppose that $\{\beta^{i_k}\}_K$ is unbounded.
Without loss of generality, we may assume that $\beta^{i_k}\to_K\infty$ and that $i_k\ge1$ holds for all $k\in K$, which implies that
\begin{equation}\label{eq:violation}
    F(\hat{x}^k)>F_k-\sigma\beta^{i_k-1}D_k(\hat{x}^k,x^k)\ge F(x^k)-\sigma\beta^{i_k-1}D_k(\hat{x}^k,x^k)
\end{equation}
where $\hat{x}^k\coloneqq x^{k,i_k-1}$ and the last inequality follows from Proposition \ref{prop:property-GVDPGM} (i).
Note that $\hat{x}^k\neq x^k$.
Using Proposition \ref{prop:property-GVDPGM} (iii) and \eqref{eq:linear-bound-inner} with $i=i_k-1$ yields
\begin{align}
    -\|\nabla f(x^k)\|\|\hat{x}^k-x^k\|+(\beta^{i_k-1}-\gamma^*)D_k(\hat{x}^k,x^k) &\le g(x^k)-l^*\\
    &= F(x^k)-f(x^k)-l^*\\
    &\le F(x^0)-f(x^k)-l^*.
\end{align}
As $x^k\to_Kx^*\in\closure\mathcal{C}$, we obtain from Assumption \ref{assume:local-error-bound} that
\begin{equation}\label{eq:linear-bound}
    -\|\nabla f(x^k)\|\|\hat{x}^k-x^k\|+\alpha'(\beta^{i_k-1}-\gamma^*)\|\hat{x}^k-x^k\|^{1+\nu}\le F(x^0)-f(x^k)-l^*
\end{equation}
for all sufficiently large $k\in K$.
If there exists an infinite set $K'\subset K$ such that $\|\hat{x}^k-x^k\|\to_{K'}c\in\mathbb{R}_{++}$, then we see from \eqref{eq:linear-bound} that
\begin{equation}
    -\|\nabla f(x^*)\|c+\infty\le F(x^0)-f(x^*)-l^*.
\end{equation}
On the other hand, if $\|\hat{x}^k-x^k\|\to_{K'}\infty$ for some infinite index set $K'\subset K$, then we see from \eqref{eq:linear-bound} that
\begin{equation}
    -\|\nabla f(x^*)\|+\infty\le 0.
\end{equation}
Consequently, we have $\|\hat{x}^k-x^k\|\to_K0$.
We note that $\hat{x}^k\to_Kx^*$.
From \eqref{eq:violation}, \eqref{eq:optimality-inner} with $i=i_k-1$, and Assumption \ref{assume:well-definedness} (iii), we obtain
\begin{align}
    \beta^{i_k-1}D_k(\hat{x}^k,x^k) &\le g(x^k)-g(\hat{x}^k)-\innerprod{\nabla f(x^k)}{\hat{x}^k-x^k}\\
    &=F(x^k)-F(\hat{x}^k)+f(\hat{x}^k)-f(x^k)-\innerprod{\nabla f(x^k)}{\hat{x}^k-x^k}\\
    &\le\sigma\beta^{i_k-1}D_k(\hat{x}^k,x^k)+\frac{L_f}{2}\|\hat{x}^k-x^k\|^2
\end{align}
for all sufficiently large $k\in K$, where $L_f$ is the Lipschitz modulus of $\nabla f$ on a neighborhood of $x^*$.
Combining this with Assumption \ref{assume:local-error-bound} yields
\begin{align}
    \alpha'(1-\sigma)\beta^{i_k-1}\|\hat{x}^k-x^k\|^{2} &\le\alpha'(1-\sigma)\beta^{i_k-1}\|\hat{x}^k-x^k\|^{1+\nu}\\
    &\le(1-\sigma)\beta^{i_k-1}D_k(\hat{x}^k,x^k)\\
    &\le\frac{L_f}{2}\|\hat{x}^k-x^k\|^2
\end{align}
for all sufficiently large $k\in K$, which contradicts to $\beta^{i_k-1}\to_K\infty$ because $\hat{x}^k\neq x^k$.
From Assumption \ref{assume:local-error-bound} and Proposition \ref{prop:property-GVDPGM} (iv), it holds that $\alpha'\|x^{k+1}-x^k\|^{1+\nu}\le D_k(x^{k+1},x^k)$ for all sufficiently large $k\in K$, and hence $\|x^{k+1}-x^k\|\to_K0$.
\end{proof}

For subsequential convergence to an F-stationary point, the following assumption is made.

\begin{assumption}\label{assume:inverse-local-error-bound}
For all $z\in\mathcal{C}$, there exist a positive numbers $L',~ \nu'$, and a neighborhood $\mathcal{N}_z'\subset\mathcal{C}$ of $z$ such that $D_k(x,y)\le L'\|x-y\|^{1+\nu'}$ holds for any $x,y\in\mathcal{N}_z'$ and $k\ge0$.
\end{assumption}

The subsequential convergence result for F-stationarity is obtained as follows.

\begin{theorem}\label{thm:global-convergence-F-stat}
Let $\{x^k\}$ be a sequence generated by Algorithm \ref{alg:GVDPGM}.
Suppose that Assumptions \ref{assume:well-definedness} to \ref{assume:inverse-local-error-bound} hold.
Then any accumulation point of $\{x^k\}$ contained in $\mathcal{C}$ is an F-stationary point of \eqref{problem:general}.
\end{theorem}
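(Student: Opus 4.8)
The plan is to let $x^*$ be an accumulation point of $\{x^k\}$ with $x^* \in \mathcal{C}$, and use the characterization of F-stationarity via the subderivative: by the remark following \eqref{eq:subderivative}, it suffices to show $F'(x^*;d) \ge 0$ for all $d \in \mathcal{T}(x^*;\dom F)$. Since $\mathcal{C}$ is open and $x^* \in \mathcal{C} \cap \dom F$, and $\dom F \subset \interior\dom f$ with $f$ smooth there, we will actually only need to control $g'(x^*;d)$: because $f$ is differentiable at $x^*$, $F'(x^*;d) = \innerprod{\nabla f(x^*)}{d} + g'(x^*;d)$, so the goal reduces to showing $g'(x^*;d) \ge -\innerprod{\nabla f(x^*)}{d}$ for every $d \in \mathcal{T}(x^*;\dom g)$ (note $\dom F = \dom g$ near $x^*$).

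The key input is the optimality of $x^{k+1}$ in the subproblem \eqref{eq:general-prox-map}. Fix $d \in \mathcal{T}(x^*;\dom g)$, so there are $d^k \to d$ and $\eta_k \searrow 0$ with $x^* + \eta_k d^k \in \dom g$. Along the subsequence $K$ with $x^k \to_K x^*$, I would use the minimality of $x^{k+1}$ tested against the point $x^* + \eta_k d^k$ (or, more carefully, against $x^k + \eta_k d^k$, adjusting by $x^k - x^* \to_K 0$). This yields
\begin{equation}
    \innerprod{\nabla f(x^k)}{x^{k+1} - x^k} + \beta^{i_k} D_k(x^{k+1}, x^k) + g(x^{k+1}) \le \innerprod{\nabla f(x^k)}{x^* + \eta_k d^k - x^k} + \beta^{i_k} D_k(x^* + \eta_k d^k, x^k) + g(x^* + \eta_k d^k).
\end{equation}
By Lemma \ref{lem:boundedness-GVDPGM}, $\beta^{i_k}$ is bounded along $K$ and $\|x^{k+1} - x^k\| \to_K 0$, so $x^{k+1} \to_K x^*$. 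On the left, $\beta^{i_k} D_k(x^{k+1},x^k) \ge 0$ and $\innerprod{\nabla f(x^k)}{x^{k+1}-x^k} \to_K 0$; on the right, Assumption \ref{assume:inverse-local-error-bound} gives $D_k(x^* + \eta_k d^k, x^k) \le L'\|x^* + \eta_k d^k - x^k\|^{1+\nu'}$, a quantity that (after absorbing $x^k - x^* \to_K 0$) is controlled by $(\eta_k + o(1))^{1+\nu'}$. Rearranging and dividing by $\eta_k$, then taking $\liminf$ along $K$, the $D_k$ term on the right vanishes because $\eta_k^{\nu'} \to 0$, the gradient terms converge to $\innerprod{\nabla f(x^*)}{d}$ on both sides, and the lower semicontinuity of $g$ together with $\liminf_K [g(x^* + \eta_k d^k) - g(x^{k+1})]/\eta_k$ produces a bound of the form $g'(x^*;d) \le \innerprod{\nabla f(x^*)}{d} - \innerprod{\nabla f(x^*)}{d}$...

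The delicate point — and the main obstacle — is the handling of the $g$ terms: on the left we have $g(x^{k+1})$ with $x^{k+1} \to_K x^*$ but no a priori control of how $g(x^{k+1})$ compares to $g(x^*)$ from the right direction, while on the right we have $g(x^* + \eta_k d^k)$. The clean way is to rewrite the inequality to isolate $g(x^* + \eta_k d^k) - g(x^k)$ on one side and $g(x^{k+1}) - g(x^k)$ on the other, divide by $\eta_k$, and note that the difference quotient $[g(x^{k+1}) - g(x^k)]/\eta_k$ need not be nonnegative — so instead I would choose the test point more cleverly, namely plug $x = x^k + \eta_k d^k$ into the subproblem and subtract the (trivial) lower bound coming from plugging $x = x^k$, to get
\begin{equation}
    \innerprod{\nabla f(x^k)}{x^{k+1}-x^k} + \beta^{i_k}D_k(x^{k+1},x^k) + g(x^{k+1}) - g(x^k) \le \innerprod{\nabla f(x^k)}{\eta_k d^k} + \beta^{i_k}D_k(x^k + \eta_k d^k, x^k) + g(x^k + \eta_k d^k) - g(x^k),
\end{equation}
and then combine with the fact (from \eqref{eq:optimality-inner}) that the left side is bounded below by $g(x^{k+1}) - g(x^k)$ minus a term going to $0$, while using $F(x^{k+1}) \le F(x^k)$ from Proposition \ref{prop:property-GVDPGM} to relate $g(x^{k+1})$ to $g(x^k)$ via the smooth part. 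Dividing by $\eta_k$, using Assumption \ref{assume:inverse-local-error-bound} to kill $\beta^{i_k}D_k(x^k+\eta_k d^k,x^k)/\eta_k \le L' \beta^{i_k}\eta_k^{\nu'}\|d^k\|^{1+\nu'} \to_K 0$, and taking $\liminf$ over $K$ and then over the defining sequences of the subderivative, one arrives at $g'(x^*;d) + \innerprod{\nabla f(x^*)}{d} \ge 0$, i.e. $F'(x^*;d) \ge 0$. Since $d \in \mathcal{T}(x^*;\dom F)$ was arbitrary, $x^*$ is F-stationary.
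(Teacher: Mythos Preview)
Your proposal has a genuine structural gap: you tie the tangent-cone parameters $(\eta_k,d^k)$ to the iteration index $k$ and then try to divide by $\eta_k$ before passing to the limit. This forces you to control quantities like $[g(x^{k+1})-g(x^*)]/\eta_k$ and $\|x^*-x^k\|^{1+\nu'}/\eta_k$, neither of which is bounded because the rate at which $x^k\to_K x^*$ has nothing to do with the rate $\eta_k\searrow 0$. Your ``obstacle'' (no control of $g(x^{k+1})$ versus $g(x^*)$) is actually not an obstacle at all: lower semicontinuity of $g$ gives $\liminf_{k\to_K\infty} g(x^{k+1})\ge g(x^*)$, and since $g(x^{k+1})$ sits on the \emph{left} of a $\le$ inequality, that is precisely the direction you need.

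The fix is to \emph{decouple} the two limits, which is what the paper does. Fix an arbitrary pair $(\eta,d')$ with $\eta>0$ and $x^*+\eta d'\in\dom F$, test the subproblem optimality of $x^{k+1}$ against this \emph{fixed} point, drop the nonnegative term $\beta^{i_k}D_k(x^{k+1},x^k)$, bound $\beta^{i_k}D_k(x^*+\eta d',x^k)\le \overline{\beta}L'\|x^*+\eta d'-x^k\|^{1+\nu'}$ via Assumption~\ref{assume:inverse-local-error-bound}, and send $k\to_K\infty$. Lower semicontinuity replaces $g(x^{k+1})$ by $g(x^*)$ on the left, and you obtain
\[
\eta\innerprod{\nabla f(x^*)}{d'}+g(x^*+\eta d')-g(x^*)+\overline{\beta}L'\eta^{1+\nu'}\|d'\|^{1+\nu'}\ge 0.
\]
Only \emph{now} divide by $\eta$ and take the lower limit as $(d',\eta)\to(d,0)$; the higher-order term vanishes and you get $F'(x^*;d)\ge 0$. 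Your second approach (testing against $x^k+\eta_k d^k$) is additionally problematic because there is no reason for $x^k+\eta_k d^k$ to lie in $\dom g$ --- you only know $x^*+\eta_k d^k\in\dom g$ --- so $g(x^k+\eta_k d^k)$ could be $+\infty$ and the inequality becomes vacuous.
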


\begin{proof}
Let $\{x^k\}_K$ be a subsequence of $\{x^k\}$ converging to some point $x^*\in\mathcal{C}$.
We see from Lemma \ref{lem:boundedness-GVDPGM} that $\{\beta^{i_k}\}_K$ is bounded and $\{x^{k+1}\}_K$ also converges to $x^*$.
Note that $x^*\in\dom F$.
Let $d\in\mathcal{T}(x^*;\dom F)$ be fixed.
For any $(d',\eta)$ satisfying $\eta>0$ and $x^*+\eta d'\in\dom F$, from the optimality of $x^{k+1}$, we have
\begin{equation}
    \innerprod{\nabla f(x^k)}{x^{k+1}-x^*-\eta d'}+\beta^{i_k}D_k(x^{k+1},x^k)-\beta^{i_k}D_k(x^*+\eta d',x^k)+g(x^{k+1})\le g(x^*+\eta d').
\end{equation}
From this and Assumption \ref{assume:inverse-local-error-bound}, it holds that
\begin{equation}
    \innerprod{\nabla f(x^k)}{x^{k+1}-x^*-\eta d'}-\overline{\beta}L'\|x^*+\eta d'-x^k\|^{1+\nu'}+g(x^{k+1})\le g(x^*+\eta d')
\end{equation}
for all sufficiently large $k\in K$, sufficiently small $\eta>0$, and $d'$ sufficiently close to $d$, where $\overline{\beta}\coloneqq\sup_{k\in K}\beta^{i_k}<\infty$.
Combining this with the lower semicontinuity of $g$ and continuity of $\nabla f$ yields
\begin{equation}
    \eta\innerprod{\nabla f(x^*)}{d'}+g(x^*+\eta d')-g(x^*)+\eta^{1+\nu'}\overline{\beta}L'\|d'\|^{1+\nu'}\ge0.
\end{equation}
Dividing both sides by $\eta$ and taking the lower limit $(d',\eta)\to(d,0)$ give
\begin{align}
    F'(x^*;d)=\innerprod{\nabla f(x^*)}{d}+g'(x^*;d)\ge0,
\end{align}
which implies that $x^*$ is an F-stationary point.
\end{proof}

We make the following assumptions for subsequential convergence to an M-stationary point.

\begin{assumption}\label{assume:continuity-distance}
For any $x\in\mathcal{C}$, if $\{y^k\}\subset\mathcal{C}$ and $\{z^k\}\subset\mathcal{C}$ converge to $x$, then $D_k(y^k,z^k)\to0$.
\end{assumption}

\begin{assumption}\label{assume:differentiability-distance}
For all $k\ge0$ and $y\in\mathcal{C}$, the function $D_k(\cdot,y)$ is continuously differentiable on $\mathcal{C}$, denoted by $\nabla D_k(x,y)$ for its gradient at $x\in\mathcal{C}$.
\end{assumption}

\begin{assumption}\label{assume:continuity-gradient-distance}
For any $x\in\mathcal{C}$, if $\{y^k\}\subset\mathcal{C}$ and $\{z^k\}\subset\mathcal{C}$ converge to $x$, then $\nabla D_k(y^k,z^k)\to0$.
\end{assumption}

Note that Assumption \ref{assume:inverse-local-error-bound} implies Assumption \ref{assume:continuity-distance}.
From Assumption \ref{assume:continuity-distance}, given an infinite index set $K$, it is easy to see that the sequence $\{D_k(y^k,z^k)\}_K$ converges to $0$ for any subsequences $\{y^k\}_K$ and $\{z^k\}_K$ converging to $x\in\mathcal{C}$.
The same holds for Assumption \ref{assume:continuity-gradient-distance}.
Before we prove the subsequential convergence result for the M-stationarity, the convergence of the objective value is showed.

\begin{proposition}\label{prop:convergence-function-value}
Let $\{x^k\}$ be a sequence generated by Algorithm \ref{alg:GVDPGM}.
Suppose that Assumptions \ref{assume:well-definedness} to \ref{assume:local-error-bound} and \ref{assume:continuity-distance} hold.
Let $\{x^k\}_K$ be a subsequence of $\{x^k\}$ converging to some point $x^*\in\mathcal{C}$.
Then $\{F_k\}$ and $\{F(x^k)\}$ converge to $F(x^*)$.
\end{proposition}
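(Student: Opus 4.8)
The plan is to identify the common limit of $\{F_k\}$ and $\{F(x^k)\}$ --- which exists and is finite by Proposition~\ref{prop:property-GVDPGM}~(ii); call it $F^\infty$ --- with $F(x^*)$. One direction is free: lower semicontinuity of $F$ together with $x^k\to_K x^*$ gives $F(x^*)\le\liminf_{k\to_K\infty}F(x^k)=F^\infty<\infty$, so in particular $x^*\in\dom F\subset\interior\dom f$, hence $g(x^*)<\infty$ and $\nabla f$ is continuous at $x^*$. The real work is the reverse inequality $F^\infty\le F(x^*)$.

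For that I would exploit the minimality of $x^{k+1}$ in the subproblem \eqref{eq:general-prox-map}, tested against the point $x^*$. Lemma~\ref{lem:boundedness-GVDPGM} supplies the two facts that make this work: $\{\beta^{i_k}\}_K$ is bounded --- say by $\overline\beta\coloneqq\sup_{k\in K}\beta^{i_k}<\infty$ --- and $\|x^{k+1}-x^k\|\to_K 0$, so that $x^{k+1}\to_K x^*$ as well. Writing the optimality inequality with test point $x^*$, cancelling $f(x^k)$, and discarding the nonnegative term $\beta^{i_k}D_k(x^{k+1},x^k)$ yields a bound of the form
\begin{equation}
    g(x^{k+1})\le g(x^*)+\innerprod{\nabla f(x^k)}{x^*-x^{k+1}}+\overline{\beta}\,D_k(x^*,x^k).
\end{equation}
Then I would let $k\to_K\infty$: the inner-product term vanishes by continuity of $\nabla f$ at $x^*$ and $x^{k+1}\to_K x^*$, while $D_k(x^*,x^k)\to_K 0$ by Assumption~\ref{assume:continuity-distance} applied to the constant sequence $x^*$ and the sequence $\{x^k\}_K$, both lying in $\mathcal{C}$ and converging to $x^*\in\mathcal{C}$ (here one uses that every iterate $x^k$ belongs to $\mathcal{C}$, which follows from $x^0\in\mathcal{C}$ and Assumption~\ref{assume:inclusion}). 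This gives $\limsup_{k\to_K\infty}g(x^{k+1})\le g(x^*)$, and lower semicontinuity of $g$ forces $g(x^{k+1})\to_K g(x^*)$; adding $f(x^{k+1})\to_K f(x^*)$ (continuity of $f$ on $\interior\dom f$) gives $F(x^{k+1})\to_K F(x^*)$. Since $\{F(x^{k+1})\}_{k\in K}$ is a subsequence of the convergent sequence $\{F(x^k)\}$, it must converge to $F^\infty$, so $F^\infty=F(x^*)$, and Proposition~\ref{prop:property-GVDPGM}~(ii) transfers the conclusion to $\{F_k\}$.

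I expect the only obstacle to be the routine bookkeeping: checking that $x^*$ is an admissible test point ($x^*\in\dom g\cap\mathcal{C}$) and that the iterates all stay in $\mathcal{C}$ so that Assumption~\ref{assume:continuity-distance} is applicable to $D_k(x^*,x^k)$ --- both immediate from the standing assumptions and from $x^*\in\mathcal{C}$ being given. No step is genuinely hard once Lemma~\ref{lem:boundedness-GVDPGM} has delivered the boundedness of $\{\beta^{i_k}\}_K$ and the convergence $x^{k+1}\to_K x^*$; the argument is essentially the same \enquote{test the proximal optimality against the limit point} trick used in the proof of Theorem~\ref{thm:global-convergence-F-stat}.
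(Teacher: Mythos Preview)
Your proposal is correct and follows essentially the same route as the paper: invoke Lemma~\ref{lem:boundedness-GVDPGM} for the boundedness of $\{\beta^{i_k}\}_K$ and $x^{k+1}\to_K x^*$, test the optimality of $x^{k+1}$ against $x^*$, use Assumption~\ref{assume:continuity-distance} to kill the distance term, and conclude $\limsup_{k\to_K\infty}g(x^{k+1})\le g(x^*)$ before combining with lower semicontinuity of $g$, continuity of $f$, and Proposition~\ref{prop:property-GVDPGM}~(ii). The only cosmetic difference is that you discard the nonnegative term $\beta^{i_k}D_k(x^{k+1},x^k)$ outright, whereas the paper sends it to zero via Assumption~\ref{assume:continuity-distance} as well---either works.
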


\begin{proof}
We see from Lemma \ref{lem:boundedness-GVDPGM} that $\{\beta^{i_k}\}_K$ is bounded and $\{x^{k+1}\}_K$ also converges to $x^*$.
Since $x^{k+1}$ is optimal to the $k$th subproblem, we have
\begin{equation}
    \innerprod{\nabla f(x^k)}{x^{k+1}-x^k}+\beta^{i_k}D_k(x^{k+1},x^k)+g(x^{k+1})\le\innerprod{\nabla f(x^k)}{x^*-x^k}+\beta^{i_k}D_k(x^*,x^k)+g(x^*).
\end{equation}
From Assumption \ref{assume:continuity-distance} and the boundedness of $\{\beta^{i_k}\}_K$, $\beta^{i_k}D_k(x^{k+1},x^k)\to_K0$ and $\beta^{i_k}D_k(x^*,x^k)\to_K0$ hold, and hence taking the upper limit $k\to_K\infty$ gives
\begin{equation}
    \limsup_{k\to_K\infty}g(x^{k+1})\le g(x^*).
\end{equation}
Combining this with the lower semicontinuity of $g$ and continuity of $f$ on $\interior\dom f$ yields $F(x^{k+1})\to_KF(x^*)$.
In view of Proposition \ref{prop:property-GVDPGM} (ii), we have $\lim_{k\to\infty}F_k=\lim_{k\to\infty}F(x^k)=F(x^*)$.
\end{proof}

Using Proposition \ref{prop:convergence-function-value}, we have the following.

\begin{theorem}\label{thm:global-convergence-M-stat}
Let $\{x^k\}$ be a sequence generated by Algorithm \ref{alg:GVDPGM}.
Suppose that Assumptions \ref{assume:well-definedness} to \ref{assume:local-error-bound}, and \ref{assume:continuity-distance} to \ref{assume:continuity-gradient-distance} hold.
Then any accumulation point of $\{x^k\}$ contained in $\mathcal{C}$ is an M-stationary point of \eqref{problem:general}.
\end{theorem}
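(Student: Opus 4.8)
The plan is to read off a first-order optimality condition from the subproblem defining $x^{k+1}$, recast it as an element of $\widehat{\partial}F(x^{k+1})$, and then pass to the limit along the convergent subsequence using the very definition of the Mordukhovich subdifferential; all the analytic work has in effect already been done in Lemma \ref{lem:boundedness-GVDPGM} and Proposition \ref{prop:convergence-function-value}.

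First I would fix a subsequence $\{x^k\}_K$ converging to some $x^*\in\mathcal{C}$. Since $x^0\in\mathcal{C}$ and every point produced by \eqref{eq:general-prox-map} lies in $\mathcal{C}$ by Assumption \ref{assume:inclusion}, the whole sequence satisfies $x^k\in\mathcal{C}$; moreover Lemma \ref{lem:boundedness-GVDPGM} gives that $\{\beta^{i_k}\}_K$ is bounded and $\|x^{k+1}-x^k\|\to_K0$, so $\{x^{k+1}\}_K$ also converges to $x^*$, and $x^{k+1}\in\dom g\subset\dom F$ by Proposition \ref{prop:nonemptyness-compactness}. Because $x^{k+1}\in\mathcal{C}\subset\interior\dom f$ and $D_k(\cdot,x^k)$ is continuously differentiable on $\mathcal{C}$ (Assumption \ref{assume:differentiability-distance}), the map $x\mapsto f(x^k)+\innerprod{\nabla f(x^k)}{x-x^k}+\beta^{i_k}D_k(x,x^k)$ is $C^1$ on a neighborhood of $x^{k+1}$, so applying Fermat's rule to \eqref{eq:general-prox-map} together with the sum rule $\widehat{\partial}(\phi_1+\phi_2)=\nabla\phi_1+\widehat{\partial}\phi_2$ for smooth $\phi_1$ \citep[Exercise 8.8]{rockafellar2009variational} yields
\begin{equation}
    0\in\nabla f(x^k)+\beta^{i_k}\nabla D_k(x^{k+1},x^k)+\widehat{\partial}g(x^{k+1}).
\end{equation}
Adding $\nabla f(x^{k+1})$ and invoking the same sum rule for $F=f+g$ at $x^{k+1}$, I obtain
\begin{equation}
    w^k\coloneqq\nabla f(x^{k+1})-\nabla f(x^k)-\beta^{i_k}\nabla D_k(x^{k+1},x^k)\in\nabla f(x^{k+1})+\widehat{\partial}g(x^{k+1})=\widehat{\partial}F(x^{k+1}).
\end{equation}

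Next I would verify $w^k\to_K0$. The difference $\nabla f(x^{k+1})-\nabla f(x^k)$ tends to $0$ along $K$ by continuity of $\nabla f$ on $\interior\dom f$, since both $x^k$ and $x^{k+1}$ converge to $x^*\in\mathcal{C}\subset\interior\dom f$. For the last term, $\{\beta^{i_k}\}_K$ is bounded by Lemma \ref{lem:boundedness-GVDPGM}, while $\nabla D_k(x^{k+1},x^k)\to_K0$ follows from Assumption \ref{assume:continuity-gradient-distance} applied to the two sequences $\{x^{k+1}\}_K$ and $\{x^k\}_K$ in $\mathcal{C}$ converging to $x^*\in\mathcal{C}$; hence $\beta^{i_k}\nabla D_k(x^{k+1},x^k)\to_K0$, and therefore $w^k\to_K0$.

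Finally, Proposition \ref{prop:convergence-function-value} (whose hypotheses are met, since Assumption \ref{assume:inverse-local-error-bound} is not needed here but Assumption \ref{assume:continuity-distance} holds) gives $F(x^{k+1})\to_KF(x^*)$. Collecting everything: along $K$ we have $x^{k+1}\to x^*$, $F(x^{k+1})\to F(x^*)$, $w^k\in\widehat{\partial}F(x^{k+1})$, and $w^k\to0$, so the definition of the Mordukhovich subdifferential yields $0\in\partial F(x^*)$, i.e.\ $x^*$ is an M-stationary point of \eqref{problem:general}. The only points requiring care are bookkeeping ones — ensuring all iterates remain in the open set $\mathcal{C}$ so that $\nabla D_k$ is defined and Assumptions \ref{assume:differentiability-distance} and \ref{assume:continuity-gradient-distance} apply, and checking that the smooth-plus-nonsmooth sum rule is legitimately invoked at $x^{k+1}$ — rather than any substantive estimate, which is why I expect no real obstacle beyond assembling the previously established facts.
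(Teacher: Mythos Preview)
Your proposal is correct and follows essentially the same approach as the paper's proof: extract the first-order optimality condition from the subproblem, rewrite it as $w^k\in\widehat{\partial}F(x^{k+1})$, use Lemma \ref{lem:boundedness-GVDPGM} and Assumption \ref{assume:continuity-gradient-distance} to get $w^k\to_K0$, and invoke Proposition \ref{prop:convergence-function-value} for the function-value convergence before appealing to the definition of $\partial F$. The only differences are cosmetic (your $w^k$ is the paper's $\xi^k$, and you spell out the bookkeeping about iterates lying in $\mathcal{C}$ a bit more explicitly).
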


\begin{proof}
Let $\{x^k\}_K$ be a subsequence of $\{x^k\}$ converging to some point $x^*\in\mathcal{C}$.
We see from Lemma \ref{lem:boundedness-GVDPGM} that $\{\beta^{i_k}\}_K$ is bounded and $\{x^{k+1}\}_K$ also converges to $x^*$.
From the optimality of $x^{k+1}$, we have
\begin{equation}
    0\in\nabla f(x^k)+\beta^{i_k}\nabla D_k(x^{k+1},x^k)+\widehat{\partial}g(x^{k+1}),
\end{equation}
which implies
\begin{equation}
    \xi^k \coloneqq \nabla f(x^{k+1})-\nabla f(x^k)-\beta^{i_k}\nabla D_k(x^{k+1},x^k) \in \nabla f(x^{k+1}) + \widehat{\partial} g(x^{k+1}) = \widehat{\partial} F(x^{k+1}).
\end{equation}
We can see that $\xi^k\to_K0$ holds from the continuity of $\nabla f$ on $\interior\dom f$, Assumption \ref{assume:continuity-gradient-distance} and the boundedness of $\{\beta^{i_k}\}_K$.
Furthermore, Proposition \ref{prop:convergence-function-value} implies that $\lim_{k\to\infty}F(x^k)=F(x^*)$.
Thus, we have the desired result.
\end{proof}

Both Theorems \ref{thm:global-convergence-F-stat} and \ref{thm:global-convergence-M-stat} cannot account for accumulation points on the boundary of $\mathcal{C}$.
This issue will be addressed in Section \ref{subsec:IGM}.

\begin{remark}\label{remark:prox-Newton}
If $\{H_k\}$ is uniformly positive definite and bounded, which is a standard assumption for the global convergence of the proximal quasi-Newton-type method (see, e.g., \citep[Assumption 1]{tseng2009coordinate}), $D_k(x,y)=\frac{1}{2}\innerprod{x-y}{H_k(x-y)}$ satisfies Assumptions \ref{assume:local-error-bound} to \ref{assume:continuity-gradient-distance}.
Thus, the subsequential convergence result of the proximal quasi-Newton-type method for the fully nonconvex problem \eqref{problem:general} without global Lipschitz assumption is obtained as a corollary of our results.
Even when limited to the PGM, such convergence result for the F-stationarity is the first of its kind.
\end{remark}

\begin{remark}\label{remark:Bregman-PGM}
Let us consider $D_k(x,y)=D_h(x,y)=h(x)-h(y)-\innerprod{\nabla h(y)}{x-y}$ where $h:\mathbb{E}\to(-\infty,\infty]$ is a lower semicontinuous strictly convex function being continuously differentiable on $\mathcal{C}$.
\citet{bolte2018first} assume the strong convexity of $h$, the locally Lipschitz continuity of $\nabla f$ and $\nabla h$, and $\mathcal{C}=\mathbb{E}$ for the global convergence results.
Under such assumptions, it is easy to see that Assumptions \ref{assume:local-error-bound} to \ref{assume:continuity-gradient-distance} hold.
Accordingly, our results also provide a subsequential convergence result for the BPGM without the relative smoothness.
\end{remark}

\subsection{Global convergence and rate of convergence under KL assumption}
In the following, convergence of whole sequence and rate of convergence are established for the monotone case $p=1$, namely, $F_k=F(x^k)$ holds for all $k$.
To establish convergence results under the KL assumption, we first show the following lemma.

\begin{lemma}\label{lem:boundedness-on-ball}
Let $\{x^k\}$ be a sequence generated by Algorithm \ref{alg:GVDPGM}.
Suppose that Assumptions \ref{assume:well-definedness} to \ref{assume:local-error-bound} hold.
Then, for any $x^*\in\mathcal{C}$ and $\rho>0$ satisfying $\mathcal{B}_\rho(x^*)\subset\mathcal{C}$, there exists $\overline{\beta}_\rho>0$ such that $\beta^{i_k}\le\overline{\beta}_\rho$ holds for all $k$ with $x^k \in \mathcal{B}_\rho(x^*)$.
\end{lemma}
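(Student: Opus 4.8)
The statement is a uniform (over $k$) version of the boundedness of $\{\beta^{i_k}\}_K$ proved in Lemma~\ref{lem:boundedness-GVDPGM}: instead of tracking a single subsequence converging to $x^*$, I want a single bound $\overline{\beta}_\rho$ valid for every iterate lying in the ball $\mathcal{B}_\rho(x^*)$. The natural strategy is to mimic the proof of Lemma~\ref{lem:boundedness-GVDPGM} but to replace all the pointwise estimates at $x^*$ by estimates that hold uniformly on the compact ball $\closure{\mathcal{B}_\rho(x^*)}$. Concretely: suppose for contradiction that no such $\overline{\beta}_\rho$ exists, so there is an infinite index set $K$ with $x^k\in\mathcal{B}_\rho(x^*)$ for all $k\in K$ and $\beta^{i_k}\to_K\infty$. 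Passing to a further subsequence (the ball is compact and sits inside $\mathcal{C}\subset\interior\dom f$), we may assume $x^k\to_K\bar x\in\closure{\mathcal{B}_\rho(x^*)}\subset\mathcal{C}$, and then apply Lemma~\ref{lem:boundedness-GVDPGM} directly at $\bar x$ to get the contradiction.

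**Key steps.** First I would assume $p=1$ is \emph{not} needed here (the lemma is stated before the KL subsection's monotone restriction and only cites Assumptions~\ref{assume:well-definedness}--\ref{assume:local-error-bound}), so the proof should use only those. Second, negate the conclusion: for every $m\in\mathbb{N}$ there is an index $k_m$ with $x^{k_m}\in\mathcal{B}_\rho(x^*)$ and $\beta^{i_{k_m}}>m$; since the $k_m$ need not be distinct a priori, note that $\beta^{i_k}$ is finite for each fixed $k$ by the well-definedness lemma, so the $k_m$ must be unbounded and we extract an infinite set $K$ with distinct indices, $x^k\in\mathcal{B}_\rho(x^*)$ for $k\in K$, and $\beta^{i_k}\to_K\infty$. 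Third, use compactness of $\closure{\mathcal{B}_\rho(x^*)}$ to pass to a subsequence $x^k\to_K\bar x$ for some $\bar x\in\closure{\mathcal{B}_\rho(x^*)}\subset\mathcal{C}$. Fourth, observe that $\{x^k\}_K$ is then a subsequence of the GVDPGM iterates converging to a point $\bar x\in\mathcal{C}$, so Lemma~\ref{lem:boundedness-GVDPGM} applies and yields that $\{\beta^{i_k}\}_K$ is bounded — contradicting $\beta^{i_k}\to_K\infty$. Hence $\overline{\beta}_\rho\coloneqq\sup\{\beta^{i_k}:x^k\in\mathcal{B}_\rho(x^*)\}$ is finite, which is the claim.

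**Main obstacle.** The only real subtlety is the reduction in the third step: one must ensure the candidate limit $\bar x$ lies in the open set $\mathcal{C}$, not merely in $\closure\mathcal{C}$, so that Lemma~\ref{lem:boundedness-GVDPGM} is applicable (that lemma's conclusion on boundedness of $\beta^{i_k}$ genuinely uses $x^*\in\mathcal{C}$ via Assumption~\ref{assume:local-error-bound} at interior points). This is exactly why the hypothesis is phrased as $\mathcal{B}_\rho(x^*)\subset\mathcal{C}$ with a \emph{closed} ball $\mathcal{B}_\rho(x^*)$ (per the paper's notation $\mathcal{B}_\rho(x^*)$ is closed): then $\bar x\in\mathcal{B}_\rho(x^*)\subset\mathcal{C}$ automatically, and no boundary issue arises. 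A secondary point is making the contradiction hypothesis precise — "no such $\overline{\beta}_\rho$" literally means $\sup\{\beta^{i_k}:k\ge0,\ x^k\in\mathcal{B}_\rho(x^*)\}=\infty$, and one should remark that for each individual $k$ this quantity $\beta^{i_k}$ is finite by the preceding lemma, so the supremum being infinite forces infinitely many distinct indices $k$ with arbitrarily large $\beta^{i_k}$. Once these two bookkeeping points are handled, the proof is a direct invocation of Lemma~\ref{lem:boundedness-GVDPGM} and requires no new estimates.
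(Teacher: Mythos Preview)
Your proposal is correct and follows essentially the same route as the paper: assume no uniform bound exists, extract a subsequence $K$ with $x^k\in\mathcal{B}_\rho(x^*)$ and $\beta^{i_k}\to_K\infty$, pass to a convergent subsequence using compactness of the closed ball $\mathcal{B}_\rho(x^*)\subset\mathcal{C}$, and invoke Lemma~\ref{lem:boundedness-GVDPGM} at the limit point to contradict unboundedness of $\{\beta^{i_k}\}_K$. Your additional bookkeeping (arguing that the indices $k_m$ must be distinct because each $\beta^{i_k}$ is finite) is a nice touch that the paper leaves implicit; one minor remark is that Lemma~\ref{lem:boundedness-GVDPGM} actually applies to any accumulation point (Assumption~\ref{assume:local-error-bound} is stated for $z\in\closure\mathcal{C}$), so your worry about ensuring $\bar x\in\mathcal{C}$ is not strictly necessary there, though it does no harm.
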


\begin{proof}
To derive contradiction, we suppose that such an upper bound $\overline{\beta}_\rho$ does not exist.
Let $K'$ be a subset of $\{k\ge0\mid x^k\in\mathcal{B}_\rho(x^*)\}$ satisfying $\beta^{i_k}\to_{K'}\infty$.
As the subsequence $\{x^k\}_{K'}$ is bounded because it is included in $\mathcal{B}_\rho(x^*)$, without loss of generality, we may assume that $\{x^k\}_{K'}$ converges to $\hat{x}\in\mathcal{B}_\rho(x^*)\subset\mathcal{C}$.
Lemma \ref{lem:boundedness-GVDPGM} implies the boundedness of $\{\beta^{i_k}\}_{K'}$, which is a contradiction.
\end{proof}

To provide the convergence analysis, we make an additional assumption.

\begin{assumption}
\label{assume:for-entire-convergence}
For all $z\in\mathcal{C}$, there exist a positive number $L''$ and a neighborhood $\mathcal{N}_z''\subset\mathcal{C}$ of $z$ such that $\|\nabla D_k(x,y)\|\le L''\|x-y\|$ holds for any $x,y\in\mathcal{N}_z''$, and $k\ge0$.
\end{assumption}

Note that Assumption \ref{assume:for-entire-convergence} implies Assumption \ref{assume:continuity-gradient-distance}.
For the proximal quasi-Newton-type method and the BPGM, under the same assumptions in Remarks \ref{remark:prox-Newton} and \ref{remark:Bregman-PGM}, Assumption \ref{assume:for-entire-convergence} is satisfied.
The following can be considered as a generalization of the main theorem by \citet{jia2023convergence}.

\begin{theorem}\label{thm:KL}
Let $\{x^k\}$ be a sequence generated by Algorithm \ref{alg:GVDPGM} with $p=1$.
Suppose that Assumptions \ref{assume:well-definedness} to \ref{assume:local-error-bound}, \ref{assume:continuity-distance}, \ref{assume:differentiability-distance}, and \ref{assume:for-entire-convergence} hold.
Let $\{x^k\}_K$ be a subsequence of $\{x^k\}$ converging to some point $x^*\in\mathcal{C}$ at which $F$ has the KL property\footnote{Note that $x^*\in\dom\partial F$ holds because we can apply Theorem \ref{thm:global-convergence-M-stat} to see $0\in\partial F(x^*)$.}.
Then $F(x^k)\to F(x^*)$ and $\sum_{k=0}^\infty\|x^{k+1}-x^k\|<\infty$ hold, particularly, $\{x^k\}$ also converges to $x^*$.
Moreover, if the corresponding desingularization function $\chi$ is of the form $\chi(t)=ct^\theta$ where $c>0$ and $0<\theta\le1$, then the following assertions hold:
\begin{enumerate}[(i)]
    \item If $\theta=1$, then $\{x^k\}$ converges in a finite number of steps;
    \item If $1/2<\theta<1$, then $\{F(x^k)\}$ and $\{x^k\}$ converges Q-superlinearly and R-superlinearly of order $\frac{1}{2(1-\theta)}$, respectively;
    \item If $\theta=1/2$, then $\{F(x^k)\}$ and $\{x^k\}$ converges Q-linearly and R-linearly, respectively;
    \item If $0<\theta<1/2$, then there exist $c_1, c_2>0$ such that
    \begin{align}
        F(x^k)-F(x^*)\le c_1k^{-\frac{1}{1-2\theta}},\\
        \|x^k-x^*\|\le c_2k^{-\frac{\theta}{1-2\theta}}.
    \end{align}
\end{enumerate}
\end{theorem}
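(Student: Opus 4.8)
The plan is to follow the now-standard KL machinery (à la \citet{attouch2013convergence,bolte2014proximal}), adapted to the nonmonotone-free setting $p=1$ and to general prox-grad distances, closely mirroring \citet{jia2023convergence}. The three ingredients we need are: (a) a \emph{sufficient decrease} inequality of the form $F(x^{k})-F(x^{k+1})\ge a\|x^{k+1}-x^{k}\|^{2}$ valid once $x^{k}$ is close enough to $x^{*}$; (b) a \emph{relative error} (subgradient) bound $\dist(0,\partial F(x^{k+1}))\le b\|x^{k+1}-x^{k}\|$ near $x^{*}$; and (c) a \emph{continuity} condition $F(x^{k})\to F(x^{*})$ along a subsequence, which is already supplied by Proposition \ref{prop:convergence-function-value}. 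For (a): from Proposition \ref{prop:property-GVDPGM}(i) with $p=1$ we have $F(x^{k})-F(x^{k+1})\ge\sigma\beta^{i_{k}}D_{k}(x^{k+1},x^{k})$; combining this with Assumption \ref{assume:local-error-bound} ($D_{k}(x^{k+1},x^{k})\ge\alpha'\|x^{k+1}-x^{k}\|^{1+\nu}\ge\alpha'\|x^{k+1}-x^{k}\|^{2}$ for $x^{k}$ near $x^{*}$, once $\|x^{k+1}-x^{k}\|\le1$) and $\beta^{i_{k}}\ge1$ gives (a) with $a=\sigma\alpha'$. For (b): from the optimality of $x^{k+1}$ (exactly the $\xi^{k}$ computation in the proof of Theorem \ref{thm:global-convergence-M-stat}), $\xi^{k}=\nabla f(x^{k+1})-\nabla f(x^{k})-\beta^{i_{k}}\nabla D_{k}(x^{k+1},x^{k})\in\partial F(x^{k+1})$, so by local Lipschitzness of $\nabla f$ (Assumption \ref{assume:well-definedness}(iii)), Assumption \ref{assume:for-entire-convergence} ($\|\nabla D_{k}(x^{k+1},x^{k})\|\le L''\|x^{k+1}-x^{k}\|$), and the local boundedness $\beta^{i_{k}}\le\overline{\beta}_{\rho}$ from Lemma \ref{lem:boundedness-on-ball}, we get $\|\xi^{k}\|\le(L_{f}+\overline{\beta}_{\rho}L'')\|x^{k+1}-x^{k}\|$, i.e.\ (b).

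With (a), (b), (c) in hand, I would run the standard ``uniformization of KL + telescoping'' argument. First fix $\rho>0$ with $\mathcal{B}_{\rho}(x^{*})\subset\mathcal{C}$ and also inside all the relevant neighborhoods from Assumptions \ref{assume:local-error-bound} and \ref{assume:for-entire-convergence}; shrink the KL neighborhood $\mathcal{U}$ to lie in $\mathcal{B}_{\rho/2}(x^{*})$. Since $F$ is nonincreasing (Proposition \ref{prop:property-GVDPGM}(i)) and, along $K$, $F(x^{k})\to F(x^{*})$ (Proposition \ref{prop:convergence-function-value}), in fact $F(x^{k})\downarrow F(x^{*})$ for the whole sequence; if $F(x^{k_{0}})=F(x^{*})$ for some $k_{0}$ then (a) forces $x^{k}$ to be eventually constant and we are done, so assume $F(x^{k})>F(x^{*})$ for all $k$. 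Pick an index $k_{0}\in K$ large enough that $x^{k_{0}}\in\mathcal{U}$ and
\begin{equation}
\|x^{k_{0}}-x^{*}\|+\tfrac{b}{a}\,\chi(F(x^{k_{0}})-F(x^{*}))+\sqrt{\tfrac{1}{a}(F(x^{k_{0}})-F(x^{*}))}<\tfrac{\rho}{2},
\end{equation}
which is possible because all three terms $\to0$ along $K$. Then induct: assuming $x^{k},\dots,x^{k_{0}}\in\mathcal{B}_{\rho/2}(x^{*})$, the concavity of $\chi$ gives
\begin{equation}
\chi(F(x^{k})-F(x^{*}))-\chi(F(x^{k+1})-F(x^{*}))\ge\chi'(F(x^{k})-F(x^{*}))\,(F(x^{k})-F(x^{k+1}))\ge\frac{a\|x^{k+1}-x^{k}\|^{2}}{\dist(0,\partial F(x^{k}))},
\end{equation}
and the KL inequality bounds $\dist(0,\partial F(x^{k}))$ from below by $1/\chi'(\cdot)$, which combined with (b) at index $k-1$ yields the key recursion
\begin{equation}
2\|x^{k+1}-x^{k}\|\le\|x^{k}-x^{k-1}\|+\tfrac{b}{a}\bigl(\chi(F(x^{k})-F(x^{*}))-\chi(F(x^{k+1})-F(x^{*}))\bigr)
\end{equation}
(using $2\sqrt{uv}\le u+v$). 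Summing this telescoping inequality gives $\sum_{k}\|x^{k+1}-x^{k}\|<\infty$, hence $\{x^{k}\}$ is Cauchy and converges; a triangle-inequality estimate of the partial sums keeps every iterate within $\mathcal{B}_{\rho/2}(x^{*})$, closing the induction. The limit must be $x^{*}$ because $x^{*}$ is an accumulation point.

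For the rate statements (i)--(iv) with $\chi(t)=ct^{\theta}$, I would argue as in \citet{attouch2009convergence,bolte2014proximal,jia2023convergence}. Set $\Delta_{k}\coloneqq\sum_{j\ge k}\|x^{j+1}-x^{j}\|\ge\|x^{k}-x^{*}\|$. The telescoped recursion gives $\Delta_{k}\le\Delta_{k-1}-\Delta_{k}+C\bigl(F(x^{k})-F(x^{*})\bigr)^{\theta}$ after absorbing constants; combining the sufficient-decrease bound $F(x^{k})-F(x^{k+1})\ge a\|x^{k+1}-x^{k}\|^{2}$, the subgradient bound (b), and the KL inequality $c\theta(F(x^{k})-F(x^{*}))^{\theta-1}\dist(0,\partial F(x^{k}))\ge1$ produces a one-step inequality of the form $F(x^{k+1})-F(x^{*})\le\kappa\bigl(F(x^{k})-F(x^{*})\bigr)^{2(1-\theta)}$, together with $\|x^{k}-x^{*}\|\le\Delta_{k}\le C'(F(x^{k})-F(x^{*}))^{\theta}$. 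When $\theta=1$ the KL inequality says $\dist(0,\partial F(x))\ge1/c$ on a neighborhood while (b) and $x^{k}\to x^{*}$ force $\dist(0,\partial F(x^{k}))\to0$, so after finitely many steps $F(x^{k})=F(x^{*})$ and (a) gives finite convergence (this is (i)). For $1/2<\theta<1$ the exponent $2(1-\theta)\in(0,1)$ yields Q-superlinear convergence of $\{F(x^{k})\}$ of order $\tfrac{1}{2(1-\theta)}$ and, via $\|x^{k}-x^{*}\|\lesssim(F(x^{k})-F(x^{*}))^{\theta}$, R-superlinear convergence of $\{x^{k}\}$ of that same order; $\theta=1/2$ gives the linear case; and for $0<\theta<1/2$ a standard recursion lemma on sequences satisfying $s_{k+1}\le\kappa s_{k}^{2(1-\theta)}$ with $2(1-\theta)>1$ (e.g.\ \citet[Theorem 2]{attouch2009convergence}) gives $F(x^{k})-F(x^{*})=O(k^{-1/(1-2\theta)})$ and hence $\|x^{k}-x^{*}\|=O(k^{-\theta/(1-2\theta)})$.

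The main obstacle is not any single estimate but keeping the \emph{locality} honest throughout: every one of (a), (b), and the distance-comparison inequalities holds only on neighborhoods that depend on $z=x^{*}$, and we additionally need $\beta^{i_{k}}\le\overline{\beta}_{\rho}$, which is only guaranteed \emph{while $x^{k}$ stays in $\mathcal{B}_{\rho}(x^{*})$} (Lemma \ref{lem:boundedness-on-ball}). So the induction must simultaneously (i) propagate $x^{k}\in\mathcal{B}_{\rho/2}(x^{*})$ — needed for the constants to apply — and (ii) use those constants to bound the step lengths whose sum keeps $x^{k}$ in the ball. Choosing $k_{0}$ via the displayed ``$<\rho/2$'' condition is exactly what decouples this circularity; verifying that the three terms genuinely tend to $0$ along $K$ (the first by $x^{k}\to_{K}x^{*}$, the latter two by Proposition \ref{prop:convergence-function-value} together with $\chi(0)=0$ and continuity of $\chi$) is the technical heart. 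A secondary nuisance is the case distinction $\|x^{k+1}-x^{k}\|\le1$ needed to pass from the exponent $1+\nu\le2$ in Assumption \ref{assume:local-error-bound} to a clean square; this is harmless since $\|x^{k+1}-x^{k}\|\to0$ by Proposition \ref{prop:property-GVDPGM}(iv) (or Lemma \ref{lem:boundedness-GVDPGM}).
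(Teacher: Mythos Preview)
Your plan for the core convergence argument is correct and essentially identical to the paper's: same three ingredients (a)--(c), same choice of $\rho$ and $k_0$, same induction keeping $x^k\in\mathcal{B}_\rho(x^*)$, same telescoping recursion $2\|x^{k+1}-x^k\|\le\|x^k-x^{k-1}\|+c'\bigl(\chi(R_k)-\chi(R_{k+1})\bigr)$. (One phrasing glitch: you want $\mathcal{B}_\rho(x^*)\subset\mathcal{U}$, not the reverse, since the induction only guarantees membership in the ball and you need the KL inequality to apply there.)

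There is, however, a genuine error in your rate analysis. The one-step inequality derivable from (a), (b), and KL is
\[
R_k - R_{k+1} \;\ge\; c''\, R_{k+1}^{\,2(1-\theta)},\qquad R_k \coloneqq F(x^k)-F(x^*),
\]
not $R_{k+1}\le\kappa R_k^{2(1-\theta)}$ as you wrote. (Apply KL at $x^{k+1}$ together with (b) to get $\|x^{k+1}-x^k\|\ge\tfrac{1}{bc\theta}R_{k+1}^{1-\theta}$, then insert into (a).) For $\theta>1/2$ the correct inequality gives $R_{k+1}\le(R_k/c'')^{1/(2(1-\theta))}$, which is Q-superlinear of the stated order; your version with exponent $2(1-\theta)<1$ would not yield that. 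For $\theta<1/2$, your stated recursion $s_{k+1}\le\kappa s_k^{2(1-\theta)}$ with exponent $>1$ would in fact force \emph{superlinear} (doubly-exponential) decay, not $O(k^{-1/(1-2\theta)})$; the polynomial rate must come from the difference inequality above via an integral-comparison argument, which is exactly what the paper does. Finally, the telescoped bound only gives $\|x^k-x^*\|\lesssim R_k^{1/2}+R_k^\theta$, so for $\theta>1/2$ the dominant exponent is $1/2$, not $\theta$; this still delivers the claimed R-superlinear order, but your asserted bound $\|x^k-x^*\|\le C'R_k^\theta$ is too strong in that range.
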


\begin{proof}
Propositions \ref{prop:property-GVDPGM} (i) and \ref{prop:convergence-function-value} imply that $\{F(x^k)\}$ is monotonically nonincreasing and converging to $F(x^*)$.
Suppose that $F(x^k)=F(x^*)$ holds for some $k\ge0$.
Since it follows from the monotonicity that $F(x^{k+1})=F(x^*)$, by the acceptance criterion \eqref{eq:acceptance-criterion}, we have
\begin{equation}
    \sigma D_k(x^{k+1},x^k)\le F(x^k)-F(x^{k+1})=0
\end{equation}
and hence $x^{k+1}=x^k$.
Thus, $x^k=x^*$ holds for all sufficiently large $k$, which implies that all statements are valid.
Accordingly, for the remainder of the proof, we suppose that $F(x^k)>F(x^*)$ holds for all $k\ge0$.

We first define $\rho>0$ small enough to satisfy the following condition:
\begin{enumerate}[(a)]
    \item \label{proof:KL-ball} $\mathcal{B}_\rho(x^*)\subset\mathcal{U}\cap\mathcal{N}_{x^*}\cap\mathcal{N}_{x^*}''$;
\end{enumerate}
where $\mathcal{U},~ \mathcal{N}_{x^*}$, and $\mathcal{N}_{x^*}''$ are in Definition \ref{def:KL}, Assumption \ref{assume:local-error-bound}, and Assumption \ref{assume:for-entire-convergence}, respectively.
Note that $\mathcal{B}_\rho(x^*)\subset\mathcal{N}_{x^*}''\subset\mathcal{C}\subset\interior\dom f$.
In view of Proposition \ref{prop:property-GVDPGM} (iv) and Lemma \ref{lem:boundedness-GVDPGM},
we can take $k_0\in K$ large enough to satisfy the following:
\begin{enumerate}[(a)]
\setcounter{enumi}{1}
    \item \label{proof:KL-F} $F(x^*)<F(x^k)<F(x^*)+\varpi$ holds for all $k\ge k_0$;
    \item \label{proof:KL-D} $D_k(x^{k+1},x^k)\leq \alpha'$ for all $k \geq k_0$;
    \item \label{proof:KL-sum0} $\|x^{k_0}-x^*\|+2\|x^{k_0+1}-x^{k_0}\|+\frac{L_f+\overline{\beta}_\rho L''}{\sigma\alpha'}\chi(F(x^{k_0+1})-F(x^*))\le\rho$,
\end{enumerate}
where $\varpi,~ \alpha',~ L''$, and $\overline{\beta}_\rho$ are in Definition \ref{def:KL}, Assumption \ref{assume:local-error-bound}, Assumption \ref{assume:for-entire-convergence}, and Lemma \ref{lem:boundedness-on-ball}, $L_f$ is the Lipschitz modulus of $\nabla f$ on $\mathcal{B}_\rho(x^*)$, and $\chi$ is the desingularization function.
We now prove inductively that the following statements hold for all $k\ge k_0$:
\begin{enumerate}[(I)]
    \item \label{proof:KL-I} $x^k\in\mathcal{B}_\rho(x^*)$;
    \item \label{proof:KL-II} $\|x^{k_0}-x^*\|+\sum_{l=k_0}^k\|x^{l+1}-x^l\|\le\rho$.
\end{enumerate}
For $k=k_0$, it is obvious from condition (\ref{proof:KL-sum0}).
Assume that statements (\ref{proof:KL-I}) and (\ref{proof:KL-II}) hold for $k=k_0,\ldots,k'$.
By statement (\ref{proof:KL-II}) for $k=k'$, we obtain
\begin{equation}
    \|x^{k'+1}-x^*\|\le\|x^{k_0}-x^*\|+\sum_{l=k_0}^{k'}\|x^{l+1}-x^l\|\le\rho,
\end{equation}
namely, $x^{k'+1}\in\mathcal{B}_\rho(x^*)$ so that the statement (\ref{proof:KL-I}) holds for $k=k'+1$.
Note that combining (\ref{proof:KL-I}) with the condition (\ref{proof:KL-ball}) implies
\begin{equation}\label{eq:KL-xk-neigh}
x^k \in \mathcal{B}_\rho(x^*)\subset \mathcal{U} \cap \mathcal{N}_{x^*} \cap \mathcal{N}''_{x^*},\quad k=k_0,\ldots,k'+1.
\end{equation}
Owing to conditions (\ref{proof:KL-F}) and $x^k\in\mathcal{U}$, for $k=k_0+1,\ldots,k'+1$, we see from the KL property of $F$ that 
\begin{equation}\label{eq:KL-inequality}
    \chi'(F(x^k)-F(x^*))~\dist(0,\partial F(x^k))\ge1.
\end{equation}
From the optimality of $x^k$, it holds that
\begin{equation}
    0\in \nabla f(x^{k-1})+\beta^{i_{k-1}}\nabla D_{k-1}(x^k,x^{k-1})+\partial g(x^k),
\end{equation}
which is equivalent to
\begin{equation}
    \nabla f(x^k)-\nabla f(x^{k-1})-\beta^{i_{k-1}}\nabla D_{k-1}(x^k,x^{k-1})\in\partial F(x^k).
\end{equation}
Accordingly, by \eqref{eq:KL-xk-neigh}, Lemma \ref{lem:boundedness-on-ball}, Assumption \ref{assume:for-entire-convergence}, and the locally Lipschitz continuity of $\nabla f$, we have
\begin{align}
    \dist(0,\partial F(x^k)) &\le\|\nabla f(x^k)-\nabla f(x^{k-1})-\beta^{i_{k-1}}\nabla D_{k-1}(x^k,x^{k-1})\|\\
    &\le\|\nabla f(x^k)-\nabla f(x^{k-1})\|+\beta^{i_{k-1}}\|\nabla D_{k-1}(x^k,x^{k-1})\|\\
    &\le(L_f+\overline{\beta}_\rho L'')\|x^k-x^{k-1}\|
\end{align}
for $k=k_0+1,\ldots,k'+1$.
Combining this with \eqref{eq:KL-inequality} yields
\begin{equation}\label{eq:principal}
    \chi'(F(x^k)-F(x^*))\ge\frac{1}{(L_f+\overline{\beta}_\rho L'')\|x^k-x^{k-1}\|}
\end{equation}
for $k=k_0+1,\ldots,k'+1$.
Let $\Delta_{k,l}\coloneqq\chi(F(x^k)-F(x^*))-\chi(F(x^l)-F(x^*))$.
On the other hand, as we have \eqref{eq:KL-xk-neigh}, Assumption \ref{assume:local-error-bound} and the condition (\ref{proof:KL-D}),  it follows $\alpha'\|x^{k+1}-x^k\|^{1+\nu} \leq D_k(x^{k+1},x^k) \leq \alpha'$ and thus $\|x^{k+1}-x^k\| \leq 1$.
Hence, we obtain
\begin{equation}\label{eq:sufficient-decreasing}
    \sigma\alpha'\|x^{k+1}-x^k\|^2\le\sigma\alpha'\|x^{k+1}-x^k\|^{1+\nu}\le\sigma D_k(x^{k+1},x^k)\le F(x^k)-F(x^{k+1})
\end{equation}
for $k=k_0,\ldots,k'+1$, where the last inequality follows from the acceptance criterion \eqref{eq:acceptance-criterion}.
The concavity of $\chi$, \eqref{eq:principal}, and \eqref{eq:sufficient-decreasing} imply that
\begin{equation}
    \Delta_{k,k+1}\ge\chi'(F(x^k)-F(x^*))(F(x^k)-F(x^{k+1}))\ge\frac{\sigma\alpha'\|x^{k+1}-x^k\|^2}{(L_f+\overline{\beta}_\rho L'')\|x^k-x^{k-1}\|}
\end{equation}
for $k=k_0+1,\ldots,k'+1$.
Using relation $a+b\ge2\sqrt{ab}$, we obtain
\begin{equation}\label{eq:for-sum}
    c'\Delta_{k,k+1}+\|x^k-x^{k-1}\|\ge2\sqrt{\|x^{k+1}-x^k\|^2}=2\|x^{k+1}-x^k\|
\end{equation}
for $k=k_0+1,\ldots,k'+1$, where $c'=\frac{L_f+\overline{\beta}_\rho L''}{\sigma\alpha'}$.
By summing up, we have
\begin{align}
    2\sum_{k=k_0+1}^{k'+1}\|x^{k+1}-x^k\| &\le\sum_{k=k_0+1}^{k'+1}\|x^k-x^{k-1}\|+c'\Delta_{k_0+1,k'+2}\\
    &\le\sum_{k=k_0+1}^{k'+1}\|x^{k+1}-x^k\|+\|x^{k_0+1}-x^{k_0}\|+c'\chi(F(x^{k_0+1})-F(x^*)),
\end{align}
and hence it follows from the condition (\ref{proof:KL-sum0}) that
\begin{equation}
    \|x^{k_0}-x^*\|+\sum_{k=k_0}^{k'+1}\|x^{k+1}-x^k\|\le\|x^{k_0}-x^*\|+2\|x^{k_0+1}-x^{k_0}\|+c'\chi(F(x^{k_0+1})-F(x^*))\le\rho.
\end{equation}
This implies that statement (\ref{proof:KL-II}) holds for $k=k'+1$.
Consequently, statements (\ref{proof:KL-I}) and (\ref{proof:KL-II}) hold for all $k\ge k_0$.
From this, we immediately obtain that $\sum_{k=0}^\infty\|x^{k+1}-x^k\|<\infty$ and $x^k\to x^*$.

In the remaining proof, we suppose that $\chi(t)=ct^{\theta}$ for $c>0$  and $\theta \in (0,1]$.
If $\theta=1$, the inequality \eqref{eq:principal} implies
\begin{equation}
    \|x^{k+1}-x^k\|\ge\frac{1}{c(L_f+\overline{\beta}_\rho L'')}
\end{equation}
for all $k\ge k_0$, however, which contradicts to $\|x^{k+1}-x^k\|\to0$.
Thus, $\{x^k\}$ must converge to $x^*$ in a finite number of steps.

For $0<\theta<1$, inequalities \eqref{eq:principal} and \eqref{eq:sufficient-decreasing} yield
\begin{equation}\label{eq:difference-equation}
    R_k-R_{k+1}\ge\sigma\alpha'\|x^{k+1}-x^k\|^2\ge c''R_{k+1}^{2(1-\theta)}
\end{equation}
for all $k\ge k_0$, where $c''\coloneqq\frac{\sigma\alpha'}{(L_f+\overline{\beta}_\rho L'')^2c^2\theta^2}>0$ and $R_k\coloneqq F(x^k)-F(x^*)$.
On the other hand, from \eqref{eq:for-sum}, by summing from $k+1$ to $k'+1$, we obtain
\begin{equation}
    \sum_{l=k}^{k'+1}\|x^{l+1}-x^l\|\le2\|x^{k+1}-x^{k}\|+c'\chi(F(x^{k+1})-F(x^*))
\end{equation}
for any $k'\ge k\ge k_0$.
Since the left hand side is bounded from below by $\|x^k-x^{k'+2}\|$ which converges to $\|x^k-x^*\|$ as $k'\to \infty$,
combining this with \eqref{eq:sufficient-decreasing} and the monotonicity of $\{F(x^k)\}$ implies that
\begin{equation}\label{eq:sequence-bound-function}
    \|x^k-x^*\|\le\frac{2}{\sqrt{\sigma\alpha'}}(F(x^k)-F(x^{k+1}))^\frac{1}{2}+c'cR_k^\theta\le\frac{2}{\sqrt{\sigma\alpha'}}R_k^\frac{1}{2}+c'cR_k^\theta
\end{equation}
for all $k\ge k_0$.

If $1/2<\theta<1$, namely, $\frac{1}{2(1-\theta)}>1$, then we see from \eqref{eq:difference-equation} that
\begin{equation}
    c''R_{k+1}^{2(1-\theta)}\le R_k-R_{k+1}\le R_k,
\end{equation}
which is equivalent to
\begin{equation}
    R_{k+1}\le\left(\frac{1}{c''}R_k\right)^\frac{1}{2(1-\theta)}
\end{equation}
for all $k\ge k_0$, which implies Q-superlinear convergence of order $\frac{1}{2(1-\theta)}$ of $\{F(x^k)\}$.
By \eqref{eq:sequence-bound-function} and the fact that $R_k\to0$, there exists $c'''>0$ such that it holds that
\begin{equation}
    \|x^k-x^*\|\le c'''R_k^\frac{1}{2}
\end{equation}
for all sufficiently large $k$.
Since $\{R_k^\frac{1}{2}\}$ also converges Q-superlinearly of order $\frac{1}{2(1-\theta)}$, we obtain R-superlinear convergence of order $\frac{1}{2(1-\theta)}$ of $\{x^k\}$.

If $\theta=1/2$, the inequality \eqref{eq:difference-equation} yields
\begin{equation}
    R_{k+1}\le\frac{1}{1+c''}R_k
\end{equation}
for all $k\ge k_0$, which implies Q-linear convergence of $\{F(x^k)\}$.
By combining this with \eqref{eq:sequence-bound-function}, we have
\begin{equation}
    \|x^k-x^*\|\le\left(\frac{2}{\sqrt{\sigma\alpha'}}+c'c\right)R_k\le\left(\frac{2}{\sqrt{\sigma\alpha'}}+c'c\right)R_{k_0}\left(\frac{1}{1+c''}\right)^{k-k_0}
\end{equation}
for all $k\ge k_0$.
Thus, $\{x^k\}$ converges R-linearly.

If $0<\theta<1/2$, namely, $2(\theta-1)<-1$, then we see from \eqref{eq:difference-equation} that
\begin{equation}\label{eq:integral-bound}
    c''\left(\frac{R_k}{R_{k+1}}\right)^{2(\theta-1)}\le R_k^{2(\theta-1)}(R_k-R_{k+1})\le\int_{R_{k+1}}^{R_k}r^{2(\theta-1)}dr=\frac{R_k^{2\theta-1}}{2\theta-1}-\frac{R_{k+1}^{2\theta-1}}{2\theta-1}
\end{equation}
for all $k\ge k_0$.
On the other hand, it follows from Proposition \ref{prop:property-GVDPGM} (i) and $2\theta-1<0$ that
\begin{equation}\label{eq:monotone-bound}
    \frac{R_k^{2\theta-1}}{2\theta-1}-\frac{R_{k+1}^{2\theta-1}}{2\theta-1}=\frac{R_k^{2\theta-1}}{1-2\theta}\left\{\left(\frac{R_k}{R_{k+1}}\right)^{1-2\theta}-1\right\}\ge\frac{R_0^{2\theta-1}}{1-2\theta}\left\{\left(\frac{R_k}{R_{k+1}}\right)^{1-2\theta}-1\right\}
\end{equation}
for all $k\ge0$.
Combining \eqref{eq:integral-bound} and \eqref{eq:monotone-bound} yields
\begin{align}
    \frac{R_k^{2\theta-1}}{2\theta-1}-\frac{R_{k+1}^{2\theta-1}}{2\theta-1} &\ge\max\left\{c''\left(\frac{R_k}{R_{k+1}}\right)^{2(\theta-1)},c''''\left\{\left(\frac{R_k}{R_{k+1}}\right)^{1-2\theta}-1\right\}\right\}\\
    &\ge\min_{t\ge1}\max\{c''t^{2(\theta-1)},c''''(t^{1-2\theta}-1)\}\eqqcolon c_{\min}>0
\end{align}
for all $k\ge k_0$, where $c''''=R_0^{2\theta-1}/(2\theta-1)$.
By summing from $k_0$ to $k-1$, we have
\begin{equation}
    R_k^{2\theta-1}-R_{k_0}^{2\theta-1}\ge(1-2\theta)c_{\min}(k-k_0),
\end{equation}
which is equivalent to
\begin{equation}
    R_k\le\left\{R_{k_0}^{2\theta-1}+(1-2\theta)c_{\min}(k-k_0)\right\}^{-\frac{1}{1-2\theta}},
\end{equation}
and hence there exists $c_1>0$ such that
\begin{equation}
    R_k\le c_1k^{-\frac{1}{1-2\theta}}.
\end{equation}
In view of \eqref{eq:sequence-bound-function}, there is $c_2>0$ such that
\begin{equation}
    \|x^k-x^*\|\le c_2k^{-\frac{\theta}{1-2\theta}}.
\end{equation}
This completes the proof.
\end{proof}

Note that when $\frac{1}{2}<\theta<1$, the rate of convergence is often summarized as linear convergence similar to $\theta=\frac{1}{2}$, but in fact, it exhibits superlinear convergence.

\section{Applications}\label{sec:application}
In this section, two applications are presented to show the benefits of our results.
The first one is to robust logistic regression, where the smooth term is not Lipschitz continuous, and a prox-grad distance other than the Bregman divergence is used.
The second one is new convergence results for interior gradient methods in conic optimization, which, to the best of our knowledge, are the first rigorous results in the nonconvex setting.
In what follows, the inner product $\innerprod{\cdot}{\cdot}$ denotes the dot product for $\mathbb{R}^n$ and $\mathbb{R}^{m\times n}$.

\subsection{Exponential prox-grad distance for trimmed logistic regression}\label{subsec:TLR}
For a dataset $\{(b_j,a_j)\}_{j=1}^m\subset\{-1,1\}\times\mathbb{R}^p$, a robust estimation on binary regression problem is considered.
Let $\phi(a_j;x)$ represent a (generally nonlinear) regression model where $x\in\mathbb{R}^n$ denotes the model parameters.
Motivated by the least trimmed squares estimation by \citet{rousseeuw1984least}, we consider the following problem:
\begin{equation}\label{problem:TLR}
    \underset{x\in\mathbb{R}^n}{\mbox{minimize}} \quad T^\mathrm{logis}_K(b\circ\Phi(x))+R(x),
\end{equation}
where $b=(b_1,\ldots,b_m)^\top,~ \Phi(x)=(\phi(a_1;x),\ldots,\phi(a_m;x))^\top$, $R:\mathbb{R}^n\to(-\infty,\infty]$,
\begin{equation}\label{eq:trimmed-logistic}
    T^\mathrm{logis}_K(z)\coloneqq\min_{\substack{\Lambda\subset[m]\\|\Lambda|=m-K}}\sum_{j\in\Lambda}\log(1+e^{-z_j}),
\end{equation}
and $b\circ\Phi(x)$ denotes the Hadamard product of $b$ and $\Phi(x)$.
The first term of \eqref{problem:TLR} is the sum of the logistic loss functions for the remaining samples after removing the $K$ samples with the poorest fit under the given model.
On the other hand, $R(x)$ is a regularization term that induces prior knowledge into the model, such as the $\ell_1$ norm.
The case where $\phi(a_j;x)=\innerprod{a_j}{x}$ and $R(x)=\lambda\|x\|_1$ with $\lambda>0$ is considered by \citet{sun2020trimmed}.
In this case, \eqref{problem:TLR} is equivalent to
\begin{equation}
    \underset{x\in\mathbb{R}^n,|\Lambda|=m-K}{\mbox{minimize}} \quad \sum_{j\in\Lambda}\log(1+e^{-b_j\innerprod{a_j}{x}})+\lambda\|x\|_1,
\end{equation}
for which \citet{sun2020trimmed} proposed an alternating minimization algorithm with respect to $x$ and $\Lambda$.
Alternating minimization involving discrete variables such as $\Lambda$ tends to produce poor solutions empirically.

Here, we propose rewriting \eqref{problem:TLR} as an optimization problem that can be efficiently solved using the GVDPGM with a new prox-grad distance.
Define $l_\mathrm{LINEX}(\xi)\coloneqq e^\xi-\xi-1$ and 
\begin{equation}\label{eq:trimmed-exp}
    T^{\exp}_K(z)\coloneqq\min_{\substack{\Lambda\subset[m]\\|\Lambda|=m-K}}\sum_{j\in\Lambda}e^{-z_j}.
\end{equation}
The univariate function $l_\mathrm{LINEX}$ is referred to as the linear-exponential (LINEX) loss function \citep{varian1975bayesian,zellner1986bayesian}.
Then, the problem \eqref{problem:TLR} is equivalent to
\begin{equation}\label{problem:reformulated-TLR}
    \underset{x\in\mathbb{R}^n, z\in\mathbb{R}^m}{\mbox{minimize}} \quad \sum_{j=1}^ml_\mathrm{LINEX}(b_j\phi(a_j;x)-z_j)+T^{\exp}_K(z)+R(x).
\end{equation}

\begin{theorem}
If $(x^*,z^*)$ is a minimizer of \eqref{problem:reformulated-TLR}, then $x^*$ is optimal to \eqref{problem:TLR}.
Conversely, if $x^*$ is a minimizer of \eqref{problem:TLR}, then there exists $z^*\in\mathbb{R}^m$ such that $(x^*,z^*)$ is optimal to \eqref{problem:reformulated-TLR}.
\end{theorem}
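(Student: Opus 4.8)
The plan is to eliminate the auxiliary variable $z$ from \eqref{problem:reformulated-TLR} by partial minimization and to verify that the resulting reduced objective coincides with that of \eqref{problem:TLR}. Fix $x\in\mathbb{R}^n$ and write $w_j\coloneqq b_j\phi(a_j;x)$. Using $T^{\exp}_K(z)=\min_{|\Lambda|=m-K}\sum_{j\in\Lambda}e^{-z_j}$ and exchanging the minimization over the finite family of admissible index sets $\Lambda$ with the infimum over $z\in\mathbb{R}^m$, the inner problem separates across coordinates:
\begin{equation}
\inf_{z\in\mathbb{R}^m}\left\{\sum_{j=1}^m l_\mathrm{LINEX}(w_j-z_j)+T^{\exp}_K(z)\right\}
=\min_{\substack{\Lambda\subset[m]\\|\Lambda|=m-K}}\ \sum_{j=1}^m\ \inf_{z_j\in\mathbb{R}}\psi_j^\Lambda(z_j),
\end{equation}
where $\psi_j^\Lambda(z_j)\coloneqq l_\mathrm{LINEX}(w_j-z_j)$ for $j\notin\Lambda$ and $\psi_j^\Lambda(z_j)\coloneqq l_\mathrm{LINEX}(w_j-z_j)+e^{-z_j}$ for $j\in\Lambda$.

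Next I would carry out the one-dimensional minimizations explicitly. For $j\notin\Lambda$, $\psi_j^\Lambda$ is strictly convex and coercive with unique minimizer $z_j=w_j$ and minimum value $0$. For $j\in\Lambda$, setting the derivative $-e^{w_j-z_j}+1-e^{-z_j}$ to zero gives $e^{-z_j}=1/(1+e^{w_j})$, i.e. $z_j=\log(1+e^{w_j})$; substituting back and using $e^{w_j-z_j}+e^{-z_j}=1$ yields the minimum value $z_j-w_j=\log(1+e^{w_j})-w_j=\log(1+e^{-w_j})$. Hence, for each feasible $\Lambda$, the separated inner minimum equals $\sum_{j\in\Lambda}\log(1+e^{-w_j})$, and taking the minimum over $\Lambda$ shows that the displayed infimum equals $T^\mathrm{logis}_K(b\circ\Phi(x))$. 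Defining $v(x)$ to be the infimum of the objective of \eqref{problem:reformulated-TLR} over $z$ for fixed $x$, we thus obtain $v(x)=T^\mathrm{logis}_K(b\circ\Phi(x))+R(x)$, and moreover this infimum is attained at the explicit point $z^*(x)$ just constructed (each $\psi_j^\Lambda$ is coercive and the family of admissible $\Lambda$ is finite, so the infimum over $z$ is a minimum).

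Finally I would deduce the two assertions from the identity $\inf_{x,z}\{\text{objective of }\eqref{problem:reformulated-TLR}\}=\inf_x v(x)=\inf_x\{T^\mathrm{logis}_K(b\circ\Phi(x))+R(x)\}$. If $(x^*,z^*)$ minimizes \eqref{problem:reformulated-TLR}, then its objective value equals this common infimum and is at least $v(x^*)=T^\mathrm{logis}_K(b\circ\Phi(x^*))+R(x^*)\ge\inf_x\{T^\mathrm{logis}_K(b\circ\Phi(x))+R(x)\}$, forcing equality, so $x^*$ is optimal to \eqref{problem:TLR}. Conversely, if $x^*$ minimizes \eqref{problem:TLR}, choose $z^*=z^*(x^*)$ from the attainment statement; then the objective of \eqref{problem:reformulated-TLR} at $(x^*,z^*)$ equals $v(x^*)=T^\mathrm{logis}_K(b\circ\Phi(x^*))+R(x^*)=\inf_{x,z}\{\text{objective of }\eqref{problem:reformulated-TLR}\}$, so $(x^*,z^*)$ is optimal. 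There is no genuinely hard step here; the only places requiring a little care are the justification of exchanging $\min_\Lambda$ with $\inf_z$ and the attainment claim, both of which are routine because $\Lambda$ ranges over a finite set and each $\psi_j^\Lambda$ is a coercive convex univariate function, and the explicit computation of the minimizing $z_j$ for $j\in\Lambda$, which produces exactly the logistic term.
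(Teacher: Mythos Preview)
Your proposal is correct and follows essentially the same approach as the paper: both eliminate $z$ by exchanging the finite minimum over $\Lambda$ with the minimum over $z$, separate into one-dimensional problems, and verify that the coordinatewise minima produce exactly $\log(1+e^{-w_j})$ for $j\in\Lambda$ and $0$ otherwise. You are in fact slightly more explicit than the paper about the attainment of the inner minimum and about deducing the two optimality statements from the identity $v(x)=T^\mathrm{logis}_K(b\circ\Phi(x))+R(x)$.
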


\begin{proof}
Let $w=b\circ\Phi(x)$.
Then, we obtain
\begin{align}
    &\min_{z\in\mathbb{R}^m}\bigg\{\sum_{j=1}^m\left\{e^{w_j-z_j}-(w_j-z_j)-1\right\}+\min_{\substack{\Lambda\subset[m]\\|\Lambda|=m-K}}\sum_{j\in\Lambda}e^{-z_j}\bigg\}\\
    &=\min_{\substack{\Lambda\subset[m]\\|\Lambda|=m-K}}\min_{z\in\mathbb{R}^m}\bigg\{\sum_{j=1}^m\left\{e^{w_j-z_j}-(w_j-z_j)-1\right\}+\sum_{i\in\Lambda}e^{-z_j}\bigg\}\\
    &=\min_{\substack{\Lambda\subset[m]\\|\Lambda|=m-K}}\bigg\{\sum_{j\in\Lambda}\min_{z_j\in\mathbb{R}}\left\{e^{w_j-z_j}-(w_j-z_j)-1+e^{-z_j}\right\}+\sum_{j\notin\Lambda}\min_{z_j\in\mathbb{R}}\left\{e^{w_j-z_j}-(w_j-z_j)-1\right\}\bigg\}\\
    &=\min_{\substack{\Lambda\subset[m]\\|\Lambda|=m-K}}\bigg\{\sum_{j\in\Lambda}\log(1+e^{-w_j})\bigg\}=T^\mathrm{logis}_K(w)=T^\mathrm{logis}_K(b\circ\Phi(x)),
\end{align}
which completes the proof.
\end{proof}

This is a variant of the reformulation technique for the least trimmed squares proposed by \citet{yagishita2024fast}.
To construct the GVDPGM for \eqref{problem:reformulated-TLR}, we define the following exponential prox-grad distance:
\begin{equation}
    D_{\exp}(z,w) \coloneqq \sum_{j=1}^m\psi(z_j-w_j),
\end{equation}
where $\psi(\xi)\coloneqq(e^{\xi}+e^{-\xi})/2-1=\cosh(\xi)-1$.
It is easy to see that $D_{\exp}$ satisfies the conditions of Definition \ref{def:prox-distance}.
If the proximal mapping of $R$ has a closed form, then from the following proposition, the subproblem of the GVDPGM employing $D_{e,2}^{\gamma_1,\gamma_2}((x,z),(y,w))\coloneqq\gamma_1 D_{\exp}(z,w)+\gamma_2/2\|x-y\|^2$ with $\gamma_1,\gamma_2>0$ for \eqref{problem:reformulated-TLR} is explicitly computable.

\begin{proposition}
Let $a,w\in\mathbb{R}^m$, and $\gamma>0$.
We define $\mathcal{I}$ as the set consisting of index sets $I\subset[m]$ of size $m-K$ such that $\Psi(w_j,a_j)\le\Psi(w_{j'},a_{j'})$ for any $j\in I,~ j'\notin I$, where
\begin{equation}
    \Psi(w_j,a_j) \coloneqq a_j\log\frac{\sqrt{a_j^2+\gamma^2+2\gamma e^{-w_j}}-a_j}{\sqrt{a_j^2+\gamma^2}-a_j}+\sqrt{a_j^2+\gamma^2+2\gamma e^{-w_j}}-\sqrt{a_j^2+\gamma^2}.
\end{equation}
The set
\begin{equation}\label{eq:generalized-prox-map-trimmed-exp}
    \argmin_{z\in\mathbb{R}^m}\left\{\innerprod{a}{z}+\gamma D_{\exp}(z,w)+T^{\exp}_K(z)\right\}
\end{equation}
consists of a vector $z^*$ such that
\begin{align}
    z^*_j=
    \begin{cases}
        w_j-\log\gamma+\log(\sqrt{a_j^2+\gamma^2+2\gamma e^{-w_j}}-a_j), &j\in I,\\
        w_j-\log\gamma+\log(\sqrt{a_j^2+\gamma^2}-a_j), &j\notin I
    \end{cases}
\end{align}
for some $I\in\mathcal{I}$.
\end{proposition}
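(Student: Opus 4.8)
The plan is to expose the two nested minimizations. Since $T_K^{\exp}(z)=\min_{\Lambda\subset[m],\,|\Lambda|=m-K}\sum_{j\in\Lambda}e^{-z_j}$, the minimization in \eqref{eq:generalized-prox-map-trimmed-exp} equals $\min_\Lambda\min_z\varphi_\Lambda(z)$ with
\[
\varphi_\Lambda(z)\coloneqq\innerprod{a}{z}+\gamma\sum_{j=1}^m\psi(z_j-w_j)+\sum_{j\in\Lambda}e^{-z_j},
\]
and the outer set is nonempty and compact by Proposition \ref{prop:nonemptyness-compactness} (here $T_K^{\exp}\ge0$ is prox-bounded with threshold $0<1$, $\innerprod{a}{\cdot}$ is linear, and $D_{\exp}(\cdot,w)$ is supercoercive). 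For fixed $\Lambda$ the function $\varphi_\Lambda$ is separable: the $j$-th coordinate function is $a_jz_j+\gamma\psi(z_j-w_j)$ if $j\notin\Lambda$ and has the extra summand $e^{-z_j}$ if $j\in\Lambda$. In both cases the second derivative ($\gamma\cosh(z_j-w_j)$, resp.\ $\gamma\cosh(z_j-w_j)+e^{-z_j}$) is positive and the function is coercive, so there is a unique minimizer determined by the first-order condition.

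Next I would solve these conditions. Writing $t_j\coloneqq e^{z_j-w_j}>0$ and using $\psi'=\sinh$, multiplication of the stationarity equation by $t_j$ turns it into the quadratic
\[
\tfrac{\gamma}{2}t_j^2+a_jt_j-c_j=0,\qquad c_j=\begin{cases}\tfrac{\gamma}{2}, & j\notin\Lambda,\\ \tfrac{\gamma}{2}+e^{-w_j}, & j\in\Lambda,\end{cases}
\]
whose unique positive root is $t_j=\gamma^{-1}\bigl(\sqrt{a_j^2+2\gamma c_j}-a_j\bigr)$ (the other root is negative since $\sqrt{a_j^2+2\gamma c_j}>|a_j|\ge a_j$, which also makes the logarithm below well defined). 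Hence $z_j=w_j-\log\gamma+\log\bigl(\sqrt{a_j^2+2\gamma c_j}-a_j\bigr)$, and substituting the two values of $c_j$ recovers exactly the two branches of the claimed formula for $z^*$.

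It remains to pick the optimal $\Lambda$. Let $S_{0,j}\coloneqq\sqrt{a_j^2+\gamma^2}$ and $S_{1,j}\coloneqq\sqrt{a_j^2+\gamma^2+2\gamma e^{-w_j}}$, and let $\Delta_j$ be the difference between the minimum of the $j$-th coordinate function with the $e^{-z_j}$ term and without it. Using $e^{z_j-w_j}=\gamma^{-1}(S_{\bullet,j}-a_j)$ together with $(S_{\bullet,j}-a_j)(S_{\bullet,j}+a_j)=S_{\bullet,j}^2-a_j^2$, a short computation gives $\gamma\psi(z_j^{\mathrm{out}}-w_j)=S_{0,j}-\gamma$ and $\gamma\psi(z_j^{\mathrm{in}}-w_j)+e^{-z_j^{\mathrm{in}}}=S_{1,j}-\gamma$, while the linear part contributes $a_j\log\frac{S_{1,j}-a_j}{S_{0,j}-a_j}$; adding these yields $\Delta_j=\Psi(w_j,a_j)$. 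Therefore $\min_z\varphi_\Lambda(z)$ equals a $\Lambda$-independent constant plus $\sum_{j\in\Lambda}\Psi(w_j,a_j)$, so the minimizing subsets are exactly those picking the $m-K$ indices of smallest $\Psi(w_j,a_j)$, i.e.\ the members of $\mathcal I$; combining with the coordinate formulas gives the claim.

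I expect the main obstacle to be the algebraic collapse in the last paragraph, namely verifying $\gamma\psi(z_j^{\mathrm{in}}-w_j)+e^{-z_j^{\mathrm{in}}}=S_{1,j}-\gamma$ and assembling $\Delta_j$ into the closed form $\Psi(w_j,a_j)$: the separation-and-quadratic steps are routine, but the cancellations there hinge on the rationalization identities for $S_{0,j}$ and $S_{1,j}$ and need to be carried out carefully.
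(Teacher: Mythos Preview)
Your proposal is correct and follows essentially the same approach as the paper: swap the two minimizations, solve the separable inner problem coordinate-wise, and then reduce the outer combinatorial minimization to choosing the $m-K$ smallest values of $\Psi(w_j,a_j)$. You give more detail than the paper on solving the first-order conditions (via the substitution $t_j=e^{z_j-w_j}$ and the resulting quadratic) and on verifying $\Delta_j=\Psi(w_j,a_j)$, but the strategy and the key identities are the same.
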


\begin{proof}
The minimization problem in \eqref{eq:generalized-prox-map-trimmed-exp} can be equivalently rewritten as
\begin{align}
    &\min_{z\in\mathbb{R}^m}\bigg\{\sum_{j=1}^m\big\{a_jz_j+\gamma\psi(z_j-w_j)\big\}+\min_{\substack{\Lambda\subset[m]\\|\Lambda|=m-K}}\sum_{j\in\Lambda}e^{-z_j}\bigg\}\\
    &=\min_{\substack{\Lambda\subset[m]\\|\Lambda|=m-K}}\min_{z\in\mathbb{R}^m}\bigg\{\sum_{j=1}^m\big\{a_jz_j+\gamma\psi(z_j-w_j)\big\}+\sum_{j\in\Lambda}e^{-z_j}\bigg\}\\
    &=\min_{\substack{\Lambda\subset[m]\\|\Lambda|=m-K}}\bigg\{\sum_{j\in\Lambda}\min_{z_j\in\mathbb{R}}\big\{a_jz_j+\gamma\psi(z_j-w_j)+e^{-z_j}\big\}+\sum_{j\notin\Lambda}\min_{z_j\in\mathbb{R}}\big\{a_jz_j+\gamma\psi(z_j-w_j)\big\}\bigg\}\\
    &=\min_{\substack{\Lambda\subset[m]\\|\Lambda|=m-K}}\bigg\{\sum_{j\in\Lambda}\big\{a_j(w_j-\log\gamma+\log(\sqrt{a_j^2+\gamma^2+2\gamma e^{-w_j}}-a_j))-\gamma+\sqrt{a_j^2+\gamma^2+2\gamma e^{-w_j}}\big\}\\
    &\hspace{6em}+\sum_{j\notin\Lambda}\big\{a_j(w_j-\log\gamma+\log(\sqrt{a_j^2+\gamma^2}-a_j))-\gamma+\sqrt{a_j^2+\gamma^2}\big\}\bigg\}\\
    &=\min_{\substack{\Lambda\subset[m]\\|\Lambda|=m-K}}\Bigg\{\sum_{j\in\Lambda}\bigg\{a_j\log\frac{\sqrt{a_j^2+\gamma^2+2\gamma e^{-w_j}}-a_j}{\sqrt{a_j^2+\gamma^2}-a_j}+\sqrt{a_j^2+\gamma^2+2\gamma e^{-w_j}}-\sqrt{a_j^2+\gamma^2}\bigg\}\\
    &\hspace{2em}+\sum_{j=1}^m\big\{a_j(w_j-\log\gamma+\log(\sqrt{a_j^2+\gamma^2}-a_j))-\gamma+\sqrt{a_j^2+\gamma^2}\big\}\Bigg\}\\
    &=\min_{\substack{\Lambda\subset[m]\\|\Lambda|=m-K}}\sum_{j\in\Lambda}\Psi(w_j,a_j)+\sum_{j=1}^m\big\{a_j(w_j-\log\gamma+\log(\sqrt{a_j^2+\gamma^2}-a_j))-\gamma+\sqrt{a_j^2+\gamma^2}\big\},
\end{align}
where the minimum value with respect to $z_j$ is attained at $z_j=w_j-\log\gamma+\log(\sqrt{a_j^2+\gamma^2+2\gamma e^{-w_j}}-a_j)$ for $j\in\Lambda$ and $z_j=w_j-\log\gamma+\log(\sqrt{a_j^2+\gamma^2}-a_j)$ for $j\notin\Lambda$.
This completes the proof.
\end{proof}

The resulting algorithm is novel, utilizing the prox-grad distance $D_{e,2}^{\gamma_1,\gamma_2}$ that is neither the Bregman divergence nor the $\phi$-divergence.
Under the assumptions that $R$ is prox-bounded and $\phi(a_j;\cdot)$ is sufficiently smooth such that the gradient of the first term of \eqref{problem:reformulated-TLR} is locally Lipschitz, as Assumptions \ref{assume:well-definedness} to \ref{assume:continuity-gradient-distance} are satisfied, Theorems \ref{thm:global-convergence-F-stat} and \ref{thm:global-convergence-M-stat} hold.
We only verify the validity of Assumptions \ref{assume:local-error-bound} and \ref{assume:inverse-local-error-bound} because the rest are immediate.

Since $\psi$ is $1$-strongly convex, we have
\begin{equation}
    \psi(\xi)\ge\psi(0)+\psi'(0)(\xi-0)+\frac{1}{2}(\xi-0)^2=\frac{1}{2}\xi^2.
\end{equation}
Thus, it holds that
\begin{equation}
    D_{e,2}^{\gamma_1,\gamma_2}((x,z),(y,w))\ge\frac{\gamma_1}{2}\|z-w\|^2+\frac{\gamma_2}{2}\|x-y\|^2\ge\frac{\min\{\gamma_1,\gamma_2\}}{2}\|(x,z)-(y,w)\|^2,
\end{equation}
which implies the fulfillment of Assumption \ref{assume:local-error-bound}.

Let $\tilde{L}>1$.
From simple calculations, we see that the solutions to the equation $\tilde{L}-\psi''(\xi)=0$ are $\xi^*_\pm\coloneqq\pm\log(\tilde{L}+\sqrt{\tilde{L}^2-1})$.
Since $\tilde{L}-\psi''(\xi)$ is nonnegative on $[0,\xi^*_+]$, it holds that $\tilde{L}\xi-\psi'(\xi)\ge-\psi'(0)=0$ for all $\xi\in[0,\xi^*_+]$.
Again, the nonnegativity of $\tilde{L}\xi-\psi'(\xi)$ implies the nonnegativity of $\tilde{L}\xi^2/2-\psi(\xi)$ on $[0,\xi^*_+]$.
By the symmetry of $\tilde{L}\xi^2/2-\psi(\xi)$, it is nonnegative for all $\xi\in[-\xi^*_+,\xi^*_+]$.
Thus, it holds that
\begin{align}
    D_{e,2}^{\gamma_1,\gamma_2}((x,z),(y,w))\le\frac{\tilde{L}\gamma_1}{2}\|z-w\|^2+\frac{\gamma_2}{2}\|x-y\|^2\le\frac{\max\{\tilde{L}\gamma_1,\gamma_2\}}{2}\|(x,z)-(y,w)\|^2
\end{align}
whenever $\|(x,z)-(y,w)\|\le\xi^*_+$, which implies that Assumption \ref{assume:inverse-local-error-bound} is satisfied.

Finally, we discuss conditions to ensure Theorem \ref{thm:KL} when $\phi(a_j;x)=\innerprod{a_j}{x}$.
The next lemma gives a sufficient condition for which the objective function of \eqref{problem:reformulated-TLR} enjoys KL property with some exponent, relying on the subanaliticity (cf. \cite{shiota1997,bolte2007KL}).

\begin{lemma}\label{lem:KL-exponent}
If $\phi(a_j;x)=\innerprod{a_j}{x}$ holds and if
$R(x)$ is a subanalytic function such that $\inf R \in \mathbb{R}$, $\dom R$ is closed, and $R|_{\dom R}$ is continuous, then the objective function of \eqref{problem:reformulated-TLR} has the KL property at any $(x^*,z^*) \in \dom\partial R \times \mathbb{R}^m$ with some exponent $\theta \in (0,1]$.
\end{lemma}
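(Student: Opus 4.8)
The plan is to invoke the structural stability of the Kurdyka--\L ojasiewicz property under the operations that build the objective of \eqref{problem:reformulated-TLR} out of elementary subanalytic pieces, appealing to \cite[Theorem 3.1 and Remark 3.2]{bolte2007KL}: a subanalytic function that is continuous on its closed domain satisfies the KL property at every point of $\dom\partial$ with \emph{some} exponent $\theta \in (0,1]$. Thus it suffices to show that the objective $\Theta(x,z) \coloneqq \sum_{j=1}^m l_\mathrm{LINEX}(b_j\innerprod{a_j}{x}-z_j) + T^{\exp}_K(z) + R(x)$ is subanalytic, that its domain $\mathbb{R}^n \times \mathbb{R}^m \times \dom R$ (in the $(x,z)$ variables, $\dom \Theta = \dom R \times \mathbb{R}^m$) is closed, and that $\Theta$ restricted to this domain is continuous; then the conclusion follows at any $(x^*,z^*) \in \dom \partial \Theta$. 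Note $\dom\partial \Theta \subset \dom\partial R \times \mathbb{R}^m$, matching the statement.

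First I would handle the smooth term: for each $j$, the map $(x,z) \mapsto b_j\innerprod{a_j}{x}-z_j$ is affine, hence (real) analytic and in particular subanalytic, and $l_\mathrm{LINEX}(\xi) = e^\xi - \xi - 1$ is real analytic on $\mathbb{R}$; composition of a globally subanalytic map with an analytic function, and finite sums of such, remain subanalytic (the exponential is not globally subanalytic, but it \emph{is} subanalytic on bounded sets, which is all that is needed since subanaliticity is a local notion and the KL property in Definition~\ref{def:KL} is local around $x^*$). Next, $T^{\exp}_K(z) = \min_{|\Lambda|=m-K}\sum_{j\in\Lambda} e^{-z_j}$ is a finite minimum of finite sums of the analytic functions $z_j \mapsto e^{-z_j}$; a finite minimum of subanalytic functions is subanalytic, and $T^{\exp}_K$ is finite-valued and continuous on all of $\mathbb{R}^m$. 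Finally, $R$ is subanalytic by hypothesis, with $\dom R$ closed and $R|_{\dom R}$ continuous. Summing a subanalytic function with finite (hence closed, everywhere-continuous) subanalytic functions preserves subanaliticity, closedness of the effective domain, and continuity on that domain. Therefore $\Theta$ meets the hypotheses of \cite[Theorem 3.1, Remark 3.2]{bolte2007KL}, which yields the desired KL property with an exponent $\theta \in (0,1]$ at every $(x^*,z^*) \in \dom \partial\Theta$, in particular at every $(x^*,z^*) \in \dom\partial R \times \mathbb{R}^m$.

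The main obstacle is the bookkeeping around the exponential: since $e^{\xi}$ is not globally subanalytic, one must be careful to state everything locally --- work on a bounded neighborhood of $(x^*,z^*)$, on which all the exponential terms are globally subanalytic (being restrictions of analytic functions to bounded sets), and observe that both subanaliticity and the KL inequality are local properties, so the global unboundedness of $e^\xi$ is irrelevant. A secondary point is to make explicit that the stability results being used --- subanaliticity of finite sums, of compositions with analytic maps, and of finite pointwise minima, together with lower semicontinuity and properness --- are standard (see \cite{shiota1997,bolte2007KL} and the references in the excerpt); citing them suffices and no new argument is required. Once these two caveats are dispatched, the proof is a direct application of the cited theorem, so I would keep it short.
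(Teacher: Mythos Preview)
Your approach is correct and actually more direct than the paper's. The paper does not argue that the full objective $\Theta$ is subanalytic and then apply \cite[Theorem~3.1]{bolte2007KL} once; instead it first writes $\Theta = \min_{|\Lambda|=m-K} F_\Lambda$ with $F_\Lambda(x,z) = \sum_j l_\mathrm{LINEX}(b_j\innerprod{a_j}{x}-z_j) + \sum_{j\in\Lambda} e^{-z_j} + R(x)$, shows each $F_\Lambda$ is subanalytic (invoking the bounded-from-below sum rule from \cite{shiota1997}), applies \cite[Theorem~3.1]{bolte2007KL} to each $F_\Lambda$ to get an exponent $\theta_\Lambda$, and finally appeals to \cite[Theorem~3.1]{li2017KL} to pass the KL property through the finite minimum with exponent $\theta=\min_\Lambda\theta_\Lambda$. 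Your route collapses the last two steps by using that a finite minimum of subanalytic functions is already subanalytic, so a single invocation of \cite{bolte2007KL} suffices; this avoids the Li--Pong citation at no cost, since neither argument produces an explicit exponent anyway.

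One small correction: your sentence ``$\dom\partial\Theta \subset \dom\partial R \times \mathbb{R}^m$, matching the statement'' points the inclusion the wrong way. The lemma asserts KL at every $(x^*,z^*)\in\dom\partial R\times\mathbb{R}^m$, so you need $\dom\partial R\times\mathbb{R}^m\subset\dom\partial\Theta$. This does hold (indeed with equality): writing $\Theta = S + G$ with $S$ smooth and $G(x,z)=R(x)+T^{\exp}_K(z)$ separable, one has $\partial G(x,z)=\partial R(x)\times\partial T^{\exp}_K(z)$, and $T^{\exp}_K$ is locally Lipschitz (finite minimum of smooth functions), so $\partial T^{\exp}_K(z)\neq\emptyset$ everywhere. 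The paper's proof sidesteps this by observing $\dom\partial F_\Lambda = \dom\partial R\times\mathbb{R}^m$ for each smooth-plus-$R$ piece $F_\Lambda$.
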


\begin{proof}
The objective function of \eqref{problem:reformulated-TLR} can be rewritten as
\begin{equation}\label{eq:min-representation-trimmed-logistic}
    \min_{\substack{\Lambda\subset[m]\\|\Lambda|=m-K}}\underbrace{\sum_{j=1}^ml_\mathrm{LINEX}(b_j\innerprod{a_j}{x}-z_j)+\sum_{j\in\Lambda}e^{-z_j}+R(x)}_{F_\Lambda(x,z)},
\end{equation}
when $\phi(a_j;x)=\innerprod{a_j}{x}$.
We firstly prove that $F_\Lambda$ is subanalytic.
It is easy to see that $l_\mathrm{LINEX}(\xi)=e^\xi-\xi-1$ is a real analytic, convex, and nonnegative function.
Thus, so is the function $(x,z)\mapsto\sum_{j=1}^ml_\mathrm{LINEX}(b_j\innerprod{a_j}{x}-z_j)+\sum_{j\in\Lambda}e^{-z_j}$ from which, in particular, it is subanalytic and bounded from below.
Hence, $F_\Lambda(x,z)$ is subanalytic by the fact that the sum of two subanalytic functions is subanalytic when these two are bounded from below \cite{shiota1997}. 

To prove the assertion, take any $(x^*,z^*) \in \dom\partial R\times \mathbb{R}^m = \dom\partial F_\Lambda$.
Since $F_\Lambda$ is a continuous subanalytic function with $\dom F_\Lambda = \dom R \times \mathbb{R}^m$ being closed, applying \cite[Theorem 3.1]{bolte2007KL} shows that $F_\Lambda(x,z)$ has KL property at $(x^*,z^*)$ with some exponent $\theta_\Lambda \in (0,1]$.
As the function \eqref{eq:min-representation-trimmed-logistic} is the minimum of finitely many $F_\Lambda$'s, it follows from \cite[Theorem 3.1]{li2017KL} that \eqref{eq:min-representation-trimmed-logistic} has KL property at $(x^*,z^*)$ with (at least) the exponent $\theta\coloneqq \min\{\theta_\Lambda:\Lambda \subset [m],~|\Lambda|=m-K\}$.
\end{proof}

Since we have $|\psi'(\xi)| = |\psi'(\xi)-\psi'(0)| \leq \max_{\zeta \in [-1,1]} |\psi''(\zeta)| \cdot |\xi-0| = \cosh(1)|\xi|$ for all $\xi \in [-1,1]$, one can see that Assumption \ref{assume:for-entire-convergence} is satisfied for $D_{e,2}^{\gamma_1,\gamma_2}$.
Consequently, under the conditions in Lemma \ref{lem:KL-exponent}, Theorem \ref{thm:KL} applies to the problem  \eqref{eq:min-representation-trimmed-logistic} with the GVDPGM employing the prox-grad distance $D_{e,2}^{\gamma_1,\gamma_2}$.
Furthermore, if $R$ is convex and coercive, then \eqref{eq:min-representation-trimmed-logistic} is also coercive and is the pointwise minimum of finitely many convex functions, and hence it follows from \citep[Corollary 1.6]{yagishita2025exact} that the generated sequence converges to a local minimum of \eqref{eq:min-representation-trimmed-logistic}.

\subsection{Interior gradient method for conic optimization}\label{subsec:IGM}
When $\mathcal{C}\subsetneq\mathbb{E}$, the sequence generated by the GVDPGM remains in $\mathcal{C}$, that is, the interior of $\closure\mathcal{C}$.
Gradient methods that remains in the interior in this manner are called interior gradient methods \citep{eggermont1990multiplicative,iusem1995interior,iusem1996multiplicative,auslender2004interior,auslender2006interior,bauschke2017descent,takahashi2026majorization,hua2016block}.
If $f$ is differentiable on $\mathcal{C}$ but has nondifferentiable points on its boundary, the projected gradient method cannot be used because the gradient information may not be accessible at the projected points.
However, in such cases, the interior gradient methods remain applicable since the sequence generated by them stays within the interior on which $f$ is differentiable.

Theorems \ref{thm:global-convergence-F-stat} and \ref{thm:global-convergence-M-stat} do not guarantee the stationarity when the accumulation point is on the boundary of $\mathcal{C}$.
Below, choosing appropriate prox-grad distances, we introduce interior gradient methods that allow access to a closed-form solution of the subproblem and that ensure convergence even on the boundary.

\subsubsection{Nonnegative orthant}
Let $\mathbb{R}_+^n$ be the nonnegative orthant and $\mathbb{R}_{++}^n$ be the interior of $\mathbb{R}_+^n$.
We consider the following constrained optimization problem:
\begin{equation}\label{problem:nonnegative-cone}
    \underset{x\in\mathbb{R}^n_+}{\mbox{minimize}} \quad f(x)+\tilde{g}(x),
\end{equation}
where $f$ is continuously differentiable.
By setting $g=\tilde{g}+\delta_{\mathbb{R}_+^n}$ using the indicator function of the nonnegative orthant $\delta_{\mathbb{R}_+^n}$, \eqref{problem:nonnegative-cone} can be regarded as a special case of \eqref{problem:general}.
We consider using the prox-grad distance with $\mathcal{C}=\mathbb{R}_{++}^n$.
The following two examples are optimization problems for which the interior gradient methods are effective.

\begin{example}[Poisson linear inverse problem \citep{shepp1982maximum,vardi1985statistical}]
Let $b\in\mathbb{R}_{++}^m$ and $a_j\in\mathbb{R}_+^n$ for $j=1,\ldots,m$.
Suppose that $a_j\neq0$ for all $j=1,\ldots,m$.
The Poisson linear inverse problem is formulated as
\begin{equation}\label{problem:PLIP}
    \underset{x\in\mathbb{R}_+^n}{\mbox{minimize}} \quad \underbrace{\sum_{j=1}^m\{\innerprod{a_j}{x}-b_j\log\innerprod{a_j}{x}\}}_{f(x)}+\tilde{g}(x).
\end{equation}
As $\innerprod{a_j}{x}>0$ holds for all $j$ whenever $x\in\mathbb{R}_{++}^n$, we see that
\begin{equation}
    \dom f=\interior\dom f=\{x\in\mathbb{R}^m\mid\innerprod{a_j}{x}>0~ \mbox{for all}~ j\}\supset\mathbb{R}_{++}^n,
\end{equation}
and hence $f$ is differentiable on $\mathbb{R}_{++}^n$.
On the other hand, $f$ is not differentiable at $0\in\mathbb{R}_+^n$.
\end{example}

\begin{example}[KL-NMF \citep{lee2000algorithms}]
Let $V\in\mathbb{R}_+^{m\times n}$.
Suppose that $V\neq0$.
The nonnegative matrix factorization with the Kullback--Leibler divergence (KL-NMF) is formulated as
\begin{equation}\label{problem:KL-NMF}
    \underset{(W,H)\in\mathbb{R}_+^{m\times r}\times\mathbb{R}_+^{r\times n}}{\mbox{minimize}} \quad \underbrace{\sum_{j=1}^m\sum_{j'=1}^n\{(WH)_{jj'}-V_{jj'}\log(WH)_{jj'}\}}_{f(W,H)}+\tilde{g}(W,H).
\end{equation}
The interior $\mathbb{R}_{++}^{m\times r}\times\mathbb{R}_{++}^{r\times n}$ is included in
\begin{equation}
    \dom f=\interior\dom f=\{(W,H)\in\mathbb{R}^{m\times r}\times\mathbb{R}^{r\times n}\mid(WH)_{jj'}>0~ \mbox{for all}~ (j,j')~ \mbox{satisfying}~ V_{jj'}>0\}.
\end{equation}
The function $f$ is differentiable on the interior, but is not differentiable at $(0,0)\in\mathbb{R}_+^{m\times r}\times\mathbb{R}_+^{r\times n}$.
\end{example}

While the interior gradient methods for smooth convex problems proposed by \citet{eggermont1990multiplicative,iusem1995interior}, and \citet{iusem1996multiplicative} do not require the global descent lemma for $f$, they are impractical because they require either knowledge of the local Lipschitz constant or an exact line search to determine the stepsize at each iteration.
The interior gradient methods of \citet{auslender2004interior,auslender2006interior,bauschke2017descent}, and \citet{takahashi2026majorization} coincide with the GVDPGM using a prox-grad distance of the form
\begin{equation}\label{eq:distance-nonnegative-cone}
    D_{\mathbb{R}_{++}^n}^{\gamma_1,\gamma_2}(x,y)\coloneqq\gamma_1\sum_{j=1}^n(y_j)^r\left(-\log\frac{x_j}{y_j}+\frac{x_j}{y_j}-1\right)+\frac{\gamma_2}{2}\|x-y\|^2
    \in [0,\infty]
\end{equation}
for $x\in\mathbb{R}^n, y\in\mathbb{R}_{++}^n$, where $\gamma_1,\gamma_2>0$, and $r\ge0$ (the choice of stepsize differs in each case).
When $r=0$, it is called the regularized Burg divergence \citep{bolte2003barrier,attouch2004regularized,auslender2006interior,bauschke2017descent,takahashi2026majorization}, which is the Bregman divergence generated by a kernel $h(x)=-\gamma_1\sum_{j=1}^n\log x_j+\frac{\gamma_2}{2}\|x\|^2$; when $r=1$, it is a regularized $\phi$-divergence \citep{bolte2003barrier,attouch2004regularized,auslender2006interior}; and when $r=2$, it is called the logarithmic-quadratic proximal distance \citep{auslender1999logarithmic,auslender1999interior,bolte2003barrier,auslender2004interior,auslender2006interior}.
For smooth problems (i.e., $\tilde{g}=0$), the solution of subproblem \eqref{eq:general-prox-map} with $D_{\mathbb{R}_{++}^n}^{\gamma_1,\gamma_2}$ is given by
\begin{equation}
    x^{k,i}_j=\frac{-(\nabla_j f(x^k)+\tilde{\gamma_1}(x^k_j)^{r-1}-\tilde{\gamma_2}x^k_j)+\sqrt{(\nabla_j f(x^k)+\tilde{\gamma_1}(x^k_j)^{r-1}-\tilde{\gamma_2}x^k_j)^2+4\tilde{\gamma_1}\tilde{\gamma_2}(x^k_j)^r}}{2\tilde{\gamma_2}},
\end{equation}
where $\tilde{\gamma_1}=\beta^i\gamma_1, \tilde{\gamma_2}=\beta^i\gamma_2$, and $\nabla_j f=\partial f/\partial x_j$ (see \citet{auslender2004interior,auslender2006interior}).
\citet{auslender2004interior,auslender2006interior} provided convergence analyses of their interior gradient method for smooth convex problems under the assumption of Lipschitz continuity of the gradient.
For composite problems, convergence results of the interior gradient methods are given by \citet{bauschke2017descent} and \citet{takahashi2026majorization} under the assumption of relative smoothness.
Note that only \citet{takahashi2026majorization} addressed nonconvex problems; in all other works, even nonconvexity of the smooth term was not allowed.
Although \citet{takahashi2026majorization} have showed that the smooth term of the KL-NMF is smooth relative to $h(x)=-\gamma_1\sum_{j=1}^n\log x_j+\frac{\gamma_2}{2}\|x\|^2$, this does not hold for most of nonconvex functions.
Moreover, their analysis also assumes the level-boundedness of $F$, which is not satisfied in pure KL-NMF (i.e., $g=0$).
Additionally, in fact, stationarity has not been established when the accumulation point is on the boundary.
Here, a convergence analysis of interior gradient methods for nonconvex composite problems that also accounts for the boundary is provided.
\citet{takahashi2026majorization} used the regularized Burg divergence (i.e., \eqref{eq:distance-nonnegative-cone} with $r=0$), but we show below that the GVDPGM using \eqref{eq:distance-nonnegative-cone} with $r>1$ can handle convergence on the boundary.

\begin{theorem}\label{thm:nonnegative-cone}
Let $\{\gamma_{1,k}\}_{k=0}^\infty, \{\gamma_{2,k}\}_{k=0}^\infty\subset[\gamma_{\min},\gamma_{\max}]$ with $\gamma_{\max}\ge\gamma_{\min}>0$.
Suppose that Assumptions \ref{assume:well-definedness} (i)--(iv) and \ref{assume:inclusion} hold, $\dom g=\mathbb{R}_+^n$, and $g$ is continuous on $\mathbb{R}_+^n$.
Let $\{x^k\}$ be a sequence generated by Algorithm \ref{alg:GVDPGM} with $D_k=D_{\mathbb{R}_{++}^n}^{\gamma_{1,k},\gamma_{2,k}}$ and $r>1$.
Then any accumulation point of $\{x^k\}$ is an F-stationary point of \eqref{problem:general}.
\end{theorem}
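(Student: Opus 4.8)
The goal is to show that any accumulation point $x^*$ of $\{x^k\}$ is F-stationary, and the only gap left by Theorem~\ref{thm:global-convergence-F-stat} is the case $x^*\in\partial\mathbb{R}_+^n=\closure\mathcal{C}\setminus\mathcal{C}$, so the proof splits into two cases. If $x^*\in\mathcal{C}=\mathbb{R}_{++}^n$, I would simply invoke Theorem~\ref{thm:global-convergence-F-stat} after checking its hypotheses: Assumptions~\ref{assume:local-error-bound}, \ref{assume:inverse-local-error-bound}, \ref{assume:continuity-distance}, \ref{assume:differentiability-distance}, \ref{assume:continuity-gradient-distance} all hold for $D_{\mathbb{R}_{++}^n}^{\gamma_1,\gamma_2}$ on compact subsets of $\mathbb{R}_{++}^n$ (the barrier part $(y_j)^r(-\log(x_j/y_j)+x_j/y_j-1)$ is smooth with locally bounded derivatives there, and the quadratic part $\frac{\gamma_2}{2}\|x-y\|^2$ alone gives the two-sided bound $\frac{\gamma_{\min}}{2}\|x-y\|^2\le D_k(x,y)$, so Assumption~\ref{assume:local-error-bound} holds with $\nu=1$, $\alpha'=\gamma_{\min}/2$ \emph{globally}; the upper bounds needed for \ref{assume:inverse-local-error-bound} come from local smoothness). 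Thus the real content is the boundary case.

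\textbf{Boundary case.} Let $\{x^k\}_K\to x^*$ with some coordinates $x^*_j=0$, say $j\in J_0$, and $x^*_j>0$ for $j\notin J_0$. Write the subproblem optimality condition for $x^{k+1}=x^{k,i_k}$: since $g=\tilde g+\delta_{\mathbb{R}_+^n}$ and $x^{k+1}\in\mathbb{R}_{++}^n$ (by Assumption~\ref{assume:inclusion}), the indicator contributes nothing and we get, coordinatewise,
\begin{equation}
    0=\nabla_jf(x^k)+\beta^{i_k}\nabla_j D_k(x^{k+1},x^k)+\xi^k_j,\qquad \xi^k\in\widehat\partial\tilde g(x^{k+1}).
\end{equation}
The key computation is that for the LQP/$\phi$-type distance with $r>1$,
\begin{equation}
    \nabla_j D_k(x,y)=\gamma_{1,k}(y_j)^{r-1}\!\left(1-\frac{y_j}{x_j}\right)+\gamma_{2,k}(x_j-y_j),
\end{equation}
so the barrier term is bounded by $\gamma_{\max}(y_j)^{r-1}\cdot\frac{y_j}{x_j}$ in magnitude when $x_j<y_j$, i.e.\ by $\gamma_{\max}(y_j)^r/x_j$. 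For $j\notin J_0$ both $x^k_j$ and $x^{k+1}_j$ stay bounded away from $0$ (using $\|x^{k+1}-x^k\|\to_K 0$ from Lemma~\ref{lem:boundedness-GVDPGM}, which applies since Assumption~\ref{assume:local-error-bound} holds), so $\nabla_jD_k(x^{k+1},x^k)\to_K 0$ there. For $j\in J_0$ the barrier term can blow up — but here I use its \emph{sign}: when $x^{k+1}_j<x^k_j$ the factor $(1-x^k_j/x^{k+1}_j)$ is negative, so $\nabla_j D_k(x^{k+1},x^k)\le\gamma_{2,k}(x^{k+1}_j-x^k_j)\to_K 0$; hence $\limsup_{k\to_K\infty}\beta^{i_k}\nabla_jD_k(x^{k+1},x^k)\le 0$ for $j\in J_0$ (using $\{\beta^{i_k}\}_K$ bounded, again from Lemma~\ref{lem:boundedness-GVDPGM}). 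Therefore, passing to the limit along a further subsequence, $\xi^k\to_K\xi^*$ with $\xi^*_j=-\nabla_jf(x^*)$ for $j\notin J_0$ and $\xi^*_j\ge-\nabla_jf(x^*)$ for $j\in J_0$; since $\tilde g$ is continuous on $\mathbb{R}_+^n$ we have $\xi^*\in\widehat\partial\tilde g(x^*)$, and combined with $0\in\mathcal T(x^*;\mathbb{R}_+^n)$-testing this gives $\nabla f(x^*)+\xi^*+\widehat\partial\delta_{\mathbb{R}_+^n}(x^*)\ni 0$, i.e.\ F-stationarity of $F=f+\tilde g+\delta_{\mathbb{R}_+^n}$ at $x^*$. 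Concretely: for $j\in J_0$ the normal-cone component $\widehat\partial\delta_{\mathbb{R}_+^n}(x^*)_j=(-\infty,0]$ absorbs the slack $\xi^*_j+\nabla_jf(x^*)\ge 0$... wait, the sign must be $\le 0$; I would instead argue via the subderivative characterization $F'(x^*;d)\ge0$ for all $d\in\mathcal T(x^*;\dom F)=\{d:d_j\ge0,\ j\in J_0\}$, writing $F'(x^*;d)=\innerprod{\nabla f(x^*)}{d}+\tilde g'(x^*;d)$ and bounding it below using the limiting inequality from the subproblem tested against $x^*+\eta d'$, exactly as in the proof of Theorem~\ref{thm:global-convergence-F-stat} but now carefully tracking that $D_k(x^*+\eta d',x^k)$ stays finite because $d_j\ge0$ for $j\in J_0$ keeps the $j$-th argument nonnegative (indeed positive for small $\eta$ if $d_j>0$, and $=0$ giving $D_k=+\infty$ only if some $d_j=0$ with $x^*_j=0$ — but then $x^*+\eta d'\notin\mathbb{R}_{++}^n=\mathcal{C}\subset\dom D_k(\cdot,x^k)$, so one perturbs $d'$ slightly to keep it in $\mathcal{C}$, using lower semicontinuity of $D_k(\cdot,x^k)$).

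\textbf{Main obstacle.} The delicate point is controlling $\beta^{i_k}\nabla D_k(x^{k+1},x^k)$ on the boundary coordinates $j\in J_0$, where the barrier gradient is unbounded a priori. This is precisely where $r>1$ is needed: the weight $(x^k_j)^{r-1}$ decays, and more importantly the one-sided sign argument above (or, in the $F'$-formulation, the fact that the test direction $d$ has $d_j\ge0$ so the competitor point $x^*+\eta d'$ does not make the barrier explode) must be made rigorous. I expect the bookkeeping to require a careful choice of the perturbed direction $d'\to d$ staying in $\mathbb{R}_{++}^n$ and a uniform (over $k\in K$) bound on $\beta^{i_k}D_k(x^*+\eta d',x^k)$, which follows from $\{\beta^{i_k}\}_K$ bounded, $x^k\to_K x^*$, continuity of the barrier on a neighborhood of $x^*$ \emph{within the relevant orthant face}, and the explicit form of $D_{\mathbb{R}_{++}^n}^{\gamma_1,\gamma_2}$. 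Everything else — verifying Assumptions~\ref{assume:well-definedness}(v) and \ref{assume:local-error-bound} via the quadratic part, local Lipschitzness of the barrier gradient on compacts of $\mathbb{R}_{++}^n$, and the reduction to Theorem~\ref{thm:global-convergence-F-stat} in the interior case — is routine.
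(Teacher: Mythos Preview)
Your overall plan---verify Assumptions~\ref{assume:local-error-bound} and \ref{assume:inverse-local-error-bound} from the quadratic part and local smoothness of the barrier, invoke Theorem~\ref{thm:global-convergence-F-stat} for accumulation points in $\mathbb{R}_{++}^n$, and treat the boundary via the subderivative characterization---matches the paper, and you are right to abandon the first-order-condition route (limits of Fr\'echet subgradients yield only Mordukhovich subgradients, and your one-sided sign argument also presumes $x^{k+1}_j<x^k_j$, which need not hold). But the boundary step in your second approach has a real gap. When $x^*_j=0$ and $d_j=0$, the liminf defining $F'(x^*;d)$ runs over all $(d',\eta)$ with $x^*+\eta d'\in\dom F\subset\mathbb{R}_+^n$, which forces only $d'_j\ge 0$; the case $d'_j=0$ is admissible, yet then $x^*+\eta d'\notin\mathbb{R}_{++}^n$ and $D_k(x^*+\eta d',x^k)=+\infty$, so the subproblem comparison is vacuous. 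Your fix---perturb $d'$ and invoke ``lower semicontinuity of $D_k(\cdot,x^k)$''---does not work: $D_k(\cdot,x^k)$ \emph{blows up} at the boundary, so lower semicontinuity gives no usable bound.

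The paper's device is to test the subproblem at $x^*+\eta d'+\delta^k$, where $\delta^k_j\coloneqq x^k_j$ for $j\in J_0$ and $0$ otherwise. This keeps the comparison point in $\mathbb{R}_{++}^n$ for \emph{every} admissible $d'$ (with $d'_j\ge 0$), while $\delta^k\to_K 0$; for $j\in J_0$ the barrier contribution becomes
\[
-(x_j^k)^r\log(\eta d'_j+x_j^k)+(x_j^k)^r\log x_j^k+(x_j^k)^{r-1}(\eta d'_j+x_j^k)-(x_j^k)^r,
\]
each term of which vanishes as $k\to_K\infty$ precisely because $r>1$ (the term $(x_j^k)^{r-1}(\eta d'_j+x_j^k)$ is where $r>1$, not just $r>0$, is indispensable). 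Continuity of $g$ on $\mathbb{R}_+^n$ then converts $g(x^*+\eta d'+\delta^k)\to g(x^*+\eta d')$, after which the argument closes exactly as in Theorem~\ref{thm:global-convergence-F-stat}. Your perturb-$d'$ route can in fact be salvaged---derive the limit inequality for $d'_j>0$, then pass to $d'_j=0$ at fixed $\eta$ using continuity of $g$ on $\mathbb{R}_+^n$---but that relies on continuity of $g$, not of $D_k$, and you never identify this step.
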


\begin{proof}
As we see that
\begin{equation}
    D_k(x,y)=D_{\mathbb{R}_{++}^n}^{\gamma_{1,k},\gamma_{2,k}}(x,y)\ge\frac{\gamma_{\min}}{2}\|x-y\|^2
\end{equation}
for any $x\in\mathbb{R}^n, y\in\mathbb{R}_{++}^n$, Assumption \ref{assume:local-error-bound} is satisfied (and thus, Assumption \ref{assume:well-definedness} (v) is as well).
On the other hand, for any $z\in\mathbb{R}_{++}^n$, there exists a bounded neighborhood $\mathcal{N}_z'\subset\mathbb{R}_{++}^n$ of $z$ and a positive constant $\tilde{L}$ such that
\begin{equation}
    \sum_{j=1}^n\left(-\log\frac{x_j}{y_j}+\frac{x_j}{y_j}-1\right)\le\frac{\tilde{L}}{2}\|x-y\|^2
\end{equation}
for all $x,y\in\mathcal{N}_z'$, because the gradient of $x\mapsto-\sum_{j=1}^n\log x_j$ is locally Lipschitz continuous on $\mathbb{R}_{++}^n$.
Accordingly, it holds that
\begin{align}
    D_{\mathbb{R}_{++}^n}^{\gamma_{1,k},\gamma_{2,k}}(x,y) &\le\gamma_{\max}\sum_{j=1}^n\left(\sup_{y\in\mathcal{N}_z'}\max_j(y_j)^r\right)\left(-\log\frac{x_j}{y_j}+\frac{x_j}{y_j}-1\right)+\frac{\gamma_{\max}}{2}\|x-y\|^2\\
    &\le\frac{\gamma_{\max}}{2}\left(1+\tilde{L}\sup_{y\in\mathcal{N}_z'}\max_j(y_j)^r\right)\|x-y\|^2,
\end{align}
which implies Assumption \ref{assume:inverse-local-error-bound}.

Let $\{x^k\}_K$ be a subsequence of $\{x^k\}$ converging to some point $x^*$.
It follows from Proposition \ref{prop:property-GVDPGM} (iii) and the lower semicontinuity of $F$ that $x^*\in\dom F\subset\mathbb{R}_+^n$.
If $x^*\in\mathbb{R}_{++}^n$ holds, the stationarity of $x^*$ is established from Theorem \ref{thm:global-convergence-F-stat}.

The remaining part of this proof assumes that $x^*\in\mathbb{R}_+^n\setminus\mathbb{R}_{++}^n$.
We define $\{\delta^k\}\subset\mathbb{R}_+^n$ as
\begin{align}
    \delta_j^k\coloneqq
    \begin{cases}
        x_j^k, &x_j^*=0,\\
        0, &\mbox{otherwise},
    \end{cases}
\end{align}
then $\delta^k\to_K0$.
Note that $\{\beta^{i_k}\}_K$ is bounded and $\|x^{k+1}-x^k\|\to_K0$ by Lemma \ref{lem:boundedness-GVDPGM}.
Let $d\in\mathcal{T}(x^*;\dom F)$ be fixed.
For any $(d',\eta)$ satisfying $\eta>0$ and $x^*+\eta d'\in\dom F\subset\mathbb{R}_+^n$, from the optimality of $x^{k+1}$ to the subproblem, we have
\begin{align}
    &\innerprod{\nabla f(x^k)}{x^{k+1}}+g(x^{k+1})\\
    &\le\innerprod{\nabla f(x^k)}{x^{k+1}}+\beta^{i_k}D_k(x^{k+1},x^k)+g(x^{k+1})\\
    &\le\innerprod{\nabla f(x^k)}{x^*+\eta d'+\delta^k}+\beta^{i_k}D_k(x^*+\eta d'+\delta^k,x^k)+g(x^*+\eta d'+\delta^k).
\end{align}
We note that $x^*+\eta d'+\delta^k\in\mathbb{R}_{++}^n$ holds for all sufficiently small $\eta>0$ and $d'$ sufficiently close to $d$.
Rearranging the above yields
\begin{align}\label{eq:optimality-nn-cone}
\begin{split}
    &\innerprod{\nabla f(x^k)}{x^{k+1}-x^*-\eta d'-\delta^k}+g(x^{k+1})-g(x^*+\eta d'+\delta^k)\\
    &\le\overline{\beta}\gamma_{\max}\sum_{j=1}^n(x_j^k)^r\left(-\log\frac{x_j^*+\eta d'_j+\delta_j^k}{x_j^k}+\frac{x_j^*+\eta d'_j+\delta_j^k}{x_j^k}-1\right)+\frac{\overline{\beta}\gamma_{\max}}{2}\|x^*+\eta d'+\delta^k-x^k\|^2,
\end{split}
\end{align}
where $\overline{\beta}\coloneqq\sup_{k\in K}\beta^{i_k}<\infty$.
For $j$ satisfying $x_j^*=0$, since $x_j^k\to_Kx_j^*=0$, we have
\begin{align}
    &(x_j^k)^r\left(-\log\frac{x_j^*+\eta d'_j+\delta_j^k}{x_j^k}+\frac{x_j^*+\eta d'_j+\delta_j^k}{x_j^k}-1\right)\\
    &=-(x_j^k)^r\log(\eta d'_j+x_j^k)+(x_j^k)^r\log x_j^k+(x_j^k)^{r-1}(\eta d'_j+x_j^k)-(x_j^k)^r\to_K0
\end{align}
There exists $\tilde{L}>0$ such that it holds that
\begin{align}
    &(x_j^k)^r\left(-\log\frac{x_j^*+\eta d'_j+\delta_j^k}{x_j^k}+\frac{x_j^*+\eta d'_j+\delta_j^k}{x_j^k}-1\right)\\
    &\le\frac{\tilde{L}}{2}(x_j^*+\eta d'_j-x_j^k)^2
\end{align}
for any $j$ satisfying $x_j^*\neq0$, sufficiently small $\eta>0$, $d'$ sufficiently close to $d$, and sufficiently large $k\in K$.
Consequently, taking the limit $k\to_K\infty$ of \eqref{eq:optimality-nn-cone} yields
\begin{equation}
    \innerprod{\nabla f(x^*)}{-\eta d'}+g(x^*)-g(x^*+\eta d')\le\frac{\overline{\beta}\gamma_{\max}(1+\tilde{L})}{2}\|\eta d'\|^2
\end{equation}
because $g$ is continuous on $\mathbb{R}_+^n$ and $\nabla f$ is continuous on $\dom F\subset\interior\dom f$.
Rearranging this, dividing both sides by $\eta$, and taking the lower limit $(d',\eta)\to(d,0)$ give
\begin{align}
    F'(x^*;d)=\innerprod{\nabla f(x^*)}{d}+g'(x^*;d)\ge0,
\end{align}
which implies that $x^*$ is an F-stationary point.
\end{proof}

Theorem \ref{thm:nonnegative-cone} can be considered the first flawless convergence result of the interior gradient method for nonconvex composite problems, which does not require any global descent lemma and level-boundedness.
Since $r>1$, the above setting lies beyond the BPGM framework.
As it is generally difficult to establish conditions analogous to the relative smoothness, we emphasize that it is appropriate not to assume any global descent lemma.
Note that the analyses by \citet{hua2016block} and \citet{bonettini2016variable} are not applicable to the GVDPGM with \eqref{eq:distance-nonnegative-cone}, even when $\tilde{g}$ is convex, due to their assumptions on the distance.

Finally, we discuss the subproblem in the case where $\tilde{g}\neq0$.
\citet{takahashi2026majorization} have derived closed form solutions of the subproblem with the $\ell_1$ regularization $\tilde{g}(W,H)=\lambda_1\|W\|_1+\lambda_2\|H\|_1$ and the Tikhonov regularization $\tilde{g}(W,H)=\lambda_1\|W\|^2+\lambda_2\|H\|^2$.
However, since $\lambda_1\|W\|_1+\lambda_2\|H\|_1=\lambda_1\sum_{j,j'}W_{jj'}+\lambda_2\sum_{j,j'}H_{jj'}$ holds on $\mathbb{R}_+^{m\times r}\times\mathbb{R}_+^{r\times n}$ and these functions are smooth, they can be included in \( f \), eliminating the need to consider the composite form.
Here, we consider a nonconvex nonsmooth function for which the subproblem solution can be easily computed.
The trimmed $\ell_1$ norm is defined by
\begin{equation}
    T_K(x)\coloneqq\min_{\substack{\Lambda\subset[n]\\|\Lambda|=n-K}}\sum_{j\in\Lambda}|x_j|
\end{equation}
for $x\in\mathbb{R}^n$, which is a nonconvex nonsmooth function introduced by \citet{luo2013new} and \citet{huang2015two} to obtain a more clear-cut sparse solution than the $\ell_1$ norm.
The trimmed $\ell_1$ norm is known as an exact penalty function of the cardinality constraint \citep{ahn2017difference,gotoh2018dc,amir2021trimmed,lu2023exact,yagishita2025exact}:
\begin{equation}
    \|x\|_0\coloneqq|\{j\mid x_j\neq0\}|\le K.
\end{equation}
When $\tilde{g}$ is chosen as the indicator function of the cardinality constraint, the assumption on the domain of $\tilde{g}$ in Theorem \ref{thm:nonnegative-cone} is not satisfied; however, this assumption is satisfied when $T_K$ is used as a penalty function.
Although the trimmed $\ell_1$ norm is not only nonconvex and nonsmooth but also nonseparable, the solution of the subproblem can be computed as follows.

\begin{proposition}
Let $a\in\mathbb{R}^n, y\in\mathbb{R}_{++}^n, \gamma_1,\gamma_2>0$, and $\lambda>0$.
We define $\mathcal{I}$ as the set consisting of index sets $I\subset[n]$ of size $n-K$ such that $\Psi(y_j,a_j)\le\Psi(y_{j'},a_{j'})$ for any $j\in I,~ j'\notin I$, where
\begin{align}
    \Psi(y_j,a_j) &\coloneqq\Upsilon(\chi(y_j,a_j+\lambda),y_j,a_j+\lambda)-\Upsilon(\chi(y_j,a_j),y_j,a_j),\\
    \Upsilon(\xi,\upsilon,\alpha) &\coloneqq(\alpha+\gamma_1\upsilon^{r-1}-\gamma_2\upsilon)\xi+\frac{\gamma_2}{2}\xi^2-\gamma_1\upsilon^r\log\xi,\\
    \chi(\upsilon,\alpha) &\coloneqq\frac{-(\alpha+\gamma_1\upsilon^{r-1}-\gamma_2\upsilon)+\sqrt{(\alpha+\gamma_1\upsilon^{r-1}-\gamma_2\upsilon)^2+4\gamma_1\gamma_2\upsilon^r}}{2\gamma_2}.
\end{align}
The set
\begin{equation}\label{eq:generalized-prox-map-trimmed}
    \argmin_{x\in\mathbb{R}^n}\left\{\innerprod{a}{x}+D_{\mathbb{R}_{++}^n}^{\gamma_1,\gamma_2}(x,y)+\lambda T_K(x)\right\}
\end{equation}
consists of a vector $x^*$ such that
\begin{align}
    x^*_j=
    \begin{cases}
        \chi(y_j,a_j+\lambda), &j\in I,\\
        \chi(y_j,a_j), &j\notin I
    \end{cases}
\end{align}
for some $I\in\mathcal{I}$.
\end{proposition}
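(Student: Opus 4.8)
The plan is to reduce \eqref{eq:generalized-prox-map-trimmed} to finitely many coordinate-separable one-dimensional strictly convex minimizations, solve each of them in closed form, and then optimize over the combinatorial choice of the index set $\Lambda$. First I would observe that, since $y\in\mathbb{R}_{++}^n$, the term $-\log(x_j/y_j)$ makes $D_{\mathbb{R}_{++}^n}^{\gamma_1,\gamma_2}(x,y)=+\infty$ whenever some $x_j\le 0$, so the minimization may be restricted to $x\in\mathbb{R}_{++}^n$, where $|x_j|=x_j$ and hence $T_K(x)=\min_{|\Lambda|=n-K}\sum_{j\in\Lambda}x_j$. Interchanging this finite minimum with the minimization over $x$ and using separability, the subproblem becomes
\begin{equation}
\min_{\substack{\Lambda\subset[n]\\|\Lambda|=n-K}}\Bigl\{\sum_{j\in\Lambda}\min_{\xi>0}\bigl(\Upsilon(\xi,y_j,a_j+\lambda)+c_j\bigr)+\sum_{j\notin\Lambda}\min_{\xi>0}\bigl(\Upsilon(\xi,y_j,a_j)+c_j\bigr)\Bigr\},
\end{equation}
where $c_j\coloneqq\gamma_1 y_j^r\log y_j-\gamma_1 y_j^r+\tfrac{\gamma_2}{2}y_j^2$ collects the $\xi$-independent terms; this identity follows by expanding $\alpha x_j+\gamma_1 y_j^r(-\log(x_j/y_j)+x_j/y_j-1)+\tfrac{\gamma_2}{2}(x_j-y_j)^2$ for $\alpha\in\{a_j,\,a_j+\lambda\}$ and matching it with the definition of $\Upsilon$.

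Next I would analyze $\xi\mapsto\Upsilon(\xi,\upsilon,\alpha)$ on $(0,\infty)$ for fixed $\upsilon>0$: it is strictly convex, being the sum of a linear term, the strictly convex $-\gamma_1\upsilon^r\log\xi$, and the strictly convex quadratic $\tfrac{\gamma_2}{2}\xi^2$; it tends to $+\infty$ as $\xi\downarrow 0$ because of the logarithm and to $+\infty$ as $\xi\to\infty$ because $\gamma_2>0$; hence it has a unique minimizer, determined by setting its derivative to zero. Clearing the denominator in the stationarity equation yields $\gamma_2\xi^2+(\alpha+\gamma_1\upsilon^{r-1}-\gamma_2\upsilon)\xi-\gamma_1\upsilon^r=0$, whose constant coefficient $-\gamma_1\upsilon^r<0$ forces exactly one positive root, namely $\chi(\upsilon,\alpha)$. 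Substituting this back, the inner minimization in the $j$th term equals $\Upsilon(\chi(y_j,a_j+\lambda),y_j,a_j+\lambda)+c_j$ for $j\in\Lambda$ and $\Upsilon(\chi(y_j,a_j),y_j,a_j)+c_j$ for $j\notin\Lambda$; adding and subtracting $\Upsilon(\chi(y_j,a_j),y_j,a_j)$ on the indices $j\in\Lambda$ rewrites the bracket as $\sum_{j\in[n]}\bigl(c_j+\Upsilon(\chi(y_j,a_j),y_j,a_j)\bigr)+\sum_{j\in\Lambda}\Psi(y_j,a_j)$, in which only the last sum depends on $\Lambda$.

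Consequently, the outer problem reduces to minimizing $\sum_{j\in\Lambda}\Psi(y_j,a_j)$ over $\Lambda$ of size $n-K$, whose set of minimizers is, by the standard exchange argument, exactly $\mathcal{I}$; and for each $I\in\mathcal{I}$ the point $x^*$ with $x^*_j=\chi(y_j,a_j+\lambda)$ for $j\in I$ and $x^*_j=\chi(y_j,a_j)$ for $j\notin I$ attains the minimum of \eqref{eq:generalized-prox-map-trimmed}, which is the asserted form (nonemptiness also follows from the uniqueness per branch together with the finiteness of the number of branches, or directly from Proposition~\ref{prop:nonemptyness-compactness}). I expect the argument to be essentially bookkeeping, with no serious obstacle; the two points needing care are checking that the relevant root of the quadratic is the unique positive one and that, after eliminating $x$, the additive constant is genuinely independent of $\Lambda$, so that the combinatorial comparison collapses to comparing the scalars $\Psi(y_j,a_j)$. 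Both become transparent once the $\Upsilon$-decomposition above is established.
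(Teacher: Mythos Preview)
Your proposal is correct and follows essentially the same route as the paper: restrict to $\mathbb{R}_{++}^n$, interchange the finite minimum in $T_K$ with the minimization over $x$, separate into one-dimensional problems expressed via $\Upsilon$, identify their unique minimizers as $\chi(y_j,\cdot)$, and reduce the outer combinatorial problem to selecting the $n-K$ smallest values of $\Psi(y_j,a_j)$. Your treatment is slightly more detailed than the paper's (you spell out the strict convexity and coercivity of $\Upsilon(\cdot,\upsilon,\alpha)$, verify that $\chi$ is the unique positive root via the sign of the constant coefficient, and track the $\Lambda$-independent constants $c_j$ explicitly), but the argument is the same.
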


\begin{proof}
It is easy to see that $\chi(\upsilon,\alpha)$ is a unique optimal solution of $\min_{\xi>0}\Upsilon(\xi,\upsilon,\alpha)$ for $\upsilon>0,~ \alpha\in\mathbb{R}$.
The minimization problem in \eqref{eq:generalized-prox-map-trimmed} can be equivalently rewritten as
\begin{align}
    &\min_{x\in\mathbb{R}_{++}^n}\bigg\{\sum_{j=1}^n\big\{(a_j+\gamma_1(y_j)^{r-1}-\gamma_2y_j)x_j+\frac{\gamma_2}{2}(x_j)^2-\gamma_1(y_j)^r\log x_j\big\}+\lambda\min_{\substack{\Lambda\subset[n]\\|\Lambda|=n-K}}\sum_{j\in\Lambda}x_j\bigg\}\\
    &=\min_{\substack{\Lambda\subset[n]\\|\Lambda|=n-K}}\min_{x\in\mathbb{R}_{++}^n}\bigg\{\sum_{j=1}^n\big\{(a_j+\gamma_1(y_j)^{r-1}-\gamma_2y_j)x_j+\frac{\gamma_2}{2}(x_j)^2-\gamma_1(y_j)^r\log x_j\big\}\big\}+\lambda\sum_{j\in\Lambda}x_j\bigg\}\\
    &=\min_{\substack{\Lambda\subset[n]\\|\Lambda|=n-K}}\bigg\{\sum_{j\in\Lambda}\min_{x_j>0}\Upsilon(x_j,y_j,a_j+\lambda)+\sum_{j\notin\Lambda}\min_{x_j>0}\Upsilon(x_j,y_j,a_j)\bigg\}\\
    &=\min_{\substack{\Lambda\subset[n]\\|\Lambda|=n-K}}\bigg\{\sum_{j\in\Lambda}\Upsilon(\chi(y_j,a_j+\lambda),y_j,a_j+\lambda)+\sum_{j\notin\Lambda}\Upsilon(\chi(y_j,a_j),y_j,a_j)\bigg\}\\
    &=\min_{\substack{\Lambda\subset[n]\\|\Lambda|=n-K}}\bigg\{\sum_{j\in\Lambda}\big\{\Upsilon(\chi(y_j,a_j+\lambda),y_j,a_j+\lambda)-\Upsilon(\chi(y_j,a_j),y_j,a_j)\big\}+\sum_{j=1}^n\Upsilon(\chi(y_j,a_j),y_j,a_j)\bigg\}\\
    &=\min_{\substack{\Lambda\subset[n]\\|\Lambda|=n-K}}\sum_{j\in\Lambda}\Psi(a_j,w_j)+\sum_{j=1}^n\Upsilon(\chi(y_j,a_j),y_j,a_j).
\end{align}
This completes the proof.
\end{proof}

\subsubsection{Positive semidefinite cone and second-order cone}
Let $\mathbb{S}_+^n$ be the positive semidefinite cone and $\mathbb{L}_+^n$ be the second order cone, namely, $\mathbb{S}^n_+=\{X\in\mathbb{S}^n\mid X\succeq0\}$ and $\mathbb{L}_+^n=\{x\in\mathbb{R}^n\mid x_n\ge(\sum_{j=1}^{n-1}x_j^2)^{1/2}\}$, where $\mathbb{S}^n$ is the set of symmetric $n\times n$ matrices.
The interiors of $\mathbb{S}_+^n$ and $\mathbb{L}_+^n$ are denoted by $\mathbb{S}_{++}^n$ and $\mathbb{L}_{++}^n$, respectively.
We consider the following constrained optimization problems:
\begin{align}
    \underset{X\in\mathbb{S}^n_+}{\mbox{minimize}} \quad f(X),\label{problem:PSD-cone}\\
    \underset{x\in\mathbb{L}^n_+}{\mbox{minimize}} \quad f(x).\label{problem:SO-cone}
\end{align}
where $f$ is continuously differentiable.
For these problems, the use of
\begin{align}
    D_{\mathbb{S}_{++}^n}^{\gamma_1,\gamma_2}(X,Y)\coloneqq\gamma_1\det(Y)^r\left(-\log\frac{\det(X)}{\det(Y)}+\innerprod{X}{Y^{-1}}-n\right)+\frac{\gamma_2}{2}\|X-Y\|^2,\label{eq:distance-PSD-cone}\\
    D_{\mathbb{L}_{++}^n}^{\gamma_1,\gamma_2}(x,y)\coloneqq\gamma_1\innerprod{y}{Jy}^r\left(-\log\frac{\innerprod{x}{Jx}}{\innerprod{y}{Jy}}+2\frac{\innerprod{x}{Jy}}{\innerprod{y}{Jy}}-2\right)+\frac{\gamma_2}{2}\|x-y\|^2,\label{eq:distance-SO-cone}
\end{align}
are considered, where $X\in\mathbb{S}^n, Y\in\mathbb{S}_{++}^n, x\in\mathbb{R}^n, y\in\mathbb{L}_{++}^n, \gamma_1, \gamma_2>0, r\ge0$, and $J$ is a diagonal matrix with its first $n-1$ entries being $-1$ and the last entry being $1$.
In this case, the corresponding subproblems of the GVDPGM have closed form solutions that are contained in the interiors (see \citet{auslender2006interior} for details).
\citet{auslender2006interior} showed convergence results of the interior gradient methods using \eqref{eq:distance-PSD-cone} and \eqref{eq:distance-SO-cone} with $r=0$ for the above conic optimization problems under the assumptions of the convexity of $f$ and the Lipschitz continuity of $\nabla f$.
Here, the convergence of the interior gradient methods using \eqref{eq:distance-PSD-cone} and \eqref{eq:distance-SO-cone} with $r>1$ is established without assuming either the convexity or the Lipschitz continuity.
The proofs follow the same approach as in the case of the nonnegative orthant, but for completeness, they are provided in the appendix.

\begin{theorem}\label{thm:PSD-cone}
Let $\{\gamma_{1,k}\}_{k=0}^\infty, \{\gamma_{2,k}\}_{k=0}^\infty\subset[\gamma_{\min},\gamma_{\max}]$ with $\gamma_{\max}\ge\gamma_{\min}>0$.
Assume the following:
\begin{itemize}
    \item The function $f:\mathbb{S}^n\to(-\infty,\infty]$ is continuously differentiable on $\interior\dom f$, bounded from below on $\mathbb{S}_+^n$, and lower semicontinuous;
    \item It holds that $\mathbb{S}_{++}^n\subset\interior\dom f$ and $\dom f\cap\mathbb{S}_+^n\subset\interior\dom f$;
    \item The function $\nabla f$ is locally Lipschitz continuous on $\interior\dom f$.
\end{itemize}
Let $\{X^k\}$ be a sequence generated by Algorithm \ref{alg:GVDPGM} with $D_k=D_{\mathbb{S}_{++}^n}^{\gamma_{1,k},\gamma_{2,k}}$ and $r>1$ for \eqref{problem:PSD-cone}.
Then any accumulation point of $\{X^k\}$ is an F-stationary point of \eqref{problem:PSD-cone}.
\end{theorem}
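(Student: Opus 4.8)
The plan is to mirror the proof of Theorem~\ref{thm:nonnegative-cone}, replacing the coordinatewise treatment of the zero set of the limit point by a spectral block decomposition at $X^*$. First I would verify the hypotheses of the general theory for $g=\delta_{\mathbb{S}_+^n}$ and $D_k=D_{\mathbb{S}_{++}^n}^{\gamma_{1,k},\gamma_{2,k}}$: prox-boundedness of $g$ with respect to $D_k$ is trivial with threshold $0$ (both are nonnegative); Assumption~\ref{assume:inclusion} holds because $-\log\det X$ acts as a barrier on the boundary of $\mathbb{S}_+^n$; Assumption~\ref{assume:local-error-bound} (hence Assumption~\ref{assume:well-definedness}(v)) follows from $D_{\mathbb{S}_{++}^n}^{\gamma_1,\gamma_2}(X,Y)\ge\frac{\gamma_{\min}}{2}\|X-Y\|^2$; and Assumption~\ref{assume:inverse-local-error-bound} follows from the local Lipschitz continuity of the gradient $X\mapsto-X^{-1}$ of $-\log\det$ on $\mathbb{S}_{++}^n$ together with the local boundedness of $\det(Y)^r$. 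By Theorem~\ref{thm:global-convergence-F-stat}, accumulation points in $\mathbb{S}_{++}^n$ are F-stationary, so it remains to treat a subsequence $X^k\to_K X^*$ with $X^*\in\mathbb{S}_+^n\setminus\mathbb{S}_{++}^n$; by Lemma~\ref{lem:boundedness-GVDPGM}, $\{\beta^{i_k}\}_K$ is bounded and $X^{k+1}\to_K X^*$.

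Let $P$ be the orthogonal projection onto $\ker X^*$ and set $\Delta^k\coloneqq PX^kP$, which tends to $0$ along $K$ and is positive definite on $\ker X^*$ because $X^k\succ0$. Fixing $d\in\mathcal{T}(X^*;\dom F)$ and any $(D',\eta)$ with $\eta>0$ and $X^*+\eta D'\in\dom F\subset\mathbb{S}_+^n$, I would show that $Z^k\coloneqq X^*+\eta D'+\Delta^k\in\mathbb{S}_{++}^n$ for all small $\eta$: in an orthonormal basis adapted to $(\ker X^*)^\perp\oplus\ker X^*$, the matrix $Z^k$ has diagonal blocks $A+\eta B$ (with $A\succ0$ the range block of $X^*$) and $\eta E+F^k$ (with $F^k\succ0$ the $\ker X^*$-block of $X^k$) and off-diagonal block $\eta C$; for small $\eta$ one has $A+\eta B\succ0$, and since $X^*+\eta D'\succeq0$ forces $\eta E-\eta^2C^\top(A+\eta B)^{-1}C\succeq0$, adding the positive definite $F^k$ makes the corresponding Schur complement positive definite. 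Plugging $Z^k$ into the optimality of $X^{k+1}$ for the $k$-th subproblem, discarding the nonnegative term $\beta^{i_k}D_k(X^{k+1},X^k)$, and using that $\delta_{\mathbb{S}_+^n}$ vanishes at $X^{k+1}$ and at $Z^k$, I obtain $\innerprod{\nabla f(X^k)}{X^{k+1}-Z^k}\le\beta^{i_k}D_k(Z^k,X^k)$.

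The crux is to show $\limsup_{k\to_K\infty}D_k(Z^k,X^k)\le\frac{\gamma_{\max}}{2}\eta^2\|D'\|^2$. Since $Z^k-X^k\to_K\eta D'$, the quadratic part of $D_k(Z^k,X^k)$ already has $\limsup$ at most $\frac{\gamma_{\max}}{2}\eta^2\|D'\|^2$, so it suffices to show that the Burg part (carrying the factor $\gamma_{1,k}\le\gamma_{\max}$) tends to $0$. I would expand $\det(X^k)^r\big({-\log(\det Z^k/\det X^k)}+\innerprod{Z^k}{(X^k)^{-1}}-n\big)$ as $\det(X^k)^r({-\log\det Z^k})+\det(X^k)^r\log\det X^k+\innerprod{Z^k}{\det(X^k)^{r-1}\mathrm{adj}(X^k)}-n\det(X^k)^r$. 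The second summand vanishes because $t^r\log t\to0$ as $t\searrow0$; the fourth vanishes; the third vanishes because $\det(X^k)^{r-1}\to\det(X^*)^{r-1}=0$ (the only place the hypothesis $r>1$ enters) while $\mathrm{adj}(X^k)$ stays bounded; and for the first, a Schur-complement computation gives $\det Z^k\ge\det(A+\eta B)\det F^k$ whereas $\det X^k\le(\det A+o(1))\det F^k$, so $\det X^k/\det Z^k$ remains bounded, hence $-\log\det Z^k\le-\log\det X^k+O(1)$, and since $\det Z^k$ is also bounded this yields $\det(X^k)^r({-\log\det Z^k})\to0$. Letting $k\to_K\infty$ with $\overline{\beta}\coloneqq\sup_{k\in K}\beta^{i_k}<\infty$ gives $-\eta\innerprod{\nabla f(X^*)}{D'}\le\overline{\beta}\frac{\gamma_{\max}}{2}\eta^2\|D'\|^2$; dividing by $\eta$ and taking $\liminf_{(D',\eta)\to(d,0)}$ gives $F'(X^*;d)=\innerprod{\nabla f(X^*)}{d}\ge0$, and since $d\in\mathcal{T}(X^*;\dom F)$ was arbitrary, $X^*$ is F-stationary.

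\emph{The main obstacle} is exactly this last step: as $X^k$ degenerates towards the singular matrix $X^*$, both $(X^k)^{-1}$ inside $\innerprod{Z^k}{(X^k)^{-1}}$ and $-\log\det Z^k$ (in the case where $X^*+\eta D'$ is itself singular) blow up, and one must verify that the vanishing weight $\det(X^k)^r$, the adjugate identity $\det(X^k)^{r-1}\mathrm{adj}(X^k)=\det(X^k)^r(X^k)^{-1}$, and the Schur-complement determinant bounds jointly control them; this is also the reason why the choice $r>1$ is essential and why the regularized Burg divergence ($r=0$) of \citet{takahashi2026majorization} does not suffice to handle stationarity on the boundary.
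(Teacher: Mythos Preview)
Your proof is correct and follows the paper's overall two-case strategy (interior vs.\ boundary accumulation points), but your treatment of the boundary case differs from the paper's in the choice of perturbation and the bookkeeping of the blowing-up terms. You take $\Delta^k = PX^kP$ with $P$ the orthogonal projection onto $\ker X^*$, which is the faithful matrix analogue of the coordinatewise $\delta^k$ in Theorem~\ref{thm:nonnegative-cone}, and then control the Burg part via the adjugate identity $\det(X^k)^r(X^k)^{-1}=\det(X^k)^{r-1}\mathrm{adj}(X^k)$ (this is exactly where $r>1$ enters) together with Schur-complement and Fischer bounds comparing $\det Z^k$ and $\det X^k$ through the common factor $\det F^k$. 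The paper instead uses the simpler perturbation $\Delta^k=\lambda_{\min}(X^k)I$: positivity of $Z^k=X^*+\eta D'+\lambda_{\min}(X^k)I$ is immediate, the eigenvalues of $Z^k$ are explicitly $\lambda_j(X^*+\eta D')+\lambda_{\min}(X^k)$ so $\log\det Z^k$ splits into a sum handled term by term, and the trace term is disposed of via $\langle Z^k,(X^k)^{-1}\rangle\le\|Z^k\|\cdot\sqrt{n}/\lambda_{\min}(X^k)$ combined with $\det(X^k)^r/\lambda_{\min}(X^k)\le C\lambda_{\min}(X^k)^{r-1}\to0$. The paper's route is shorter and avoids block decompositions; your route is a cleaner conceptual parallel to the orthant proof and makes the role of $r>1$ more transparent through the adjugate.
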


\begin{theorem}\label{thm:SO-cone}
Let $\{\gamma_{1,k}\}_{k=0}^\infty, \{\gamma_{2,k}\}_{k=0}^\infty\subset[\gamma_{\min},\gamma_{\max}]$ with $\gamma_{\max}\ge\gamma_{\min}>0$.
Assume the following:
\begin{itemize}
    \item The function $f:\mathbb{R}^n\to(-\infty,\infty]$ is continuously differentiable on $\interior\dom f$, bounded from below on $\mathbb{L}_+^n$, and lower semicontinuous;
    \item It holds that $\mathbb{L}_{++}^n\subset\interior\dom f$ and $\dom f\cap\mathbb{L}_+^n\subset\interior\dom f$;
    \item The function $\nabla f$ is locally Lipschitz continuous on $\interior\dom f$.
\end{itemize}
Let $\{x^k\}$ be a sequence generated by Algorithm \ref{alg:GVDPGM} with $D_k=D_{\mathbb{L}_{++}^n}^{\gamma_{1,k},\gamma_{2,k}}$ and $r>1$ for \eqref{problem:SO-cone}.
Then any accumulation point of $\{x^k\}$ is an F-stationary point of \eqref{problem:SO-cone}.
\end{theorem}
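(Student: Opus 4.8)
The plan is to recast \eqref{problem:SO-cone} as an instance of \eqref{problem:general} by taking $\mathcal{C}=\mathbb{L}_{++}^n$ and $g=\delta_{\mathbb{L}_+^n}$, to verify Assumptions~\ref{assume:well-definedness}, \ref{assume:inclusion}, \ref{assume:local-error-bound} and \ref{assume:inverse-local-error-bound} for $D_k=D_{\mathbb{L}_{++}^n}^{\gamma_{1,k},\gamma_{2,k}}$ so that Theorem~\ref{thm:global-convergence-F-stat} disposes of accumulation points lying in $\mathbb{L}_{++}^n$, and then to handle accumulation points on $\partial\mathbb{L}_+^n$ by adapting the perturbation argument from the proof of Theorem~\ref{thm:nonnegative-cone} to the non-separable geometry of the second-order cone.

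\textbf{Checking the assumptions.} Write $h(x)\coloneqq-\log\innerprod{x}{Jx}$, the standard logarithmically homogeneous self-concordant barrier of $\mathbb{L}_{++}^n$; it is strictly convex and $C^\infty$ there. A direct computation identifies the first summand of $D_{\mathbb{L}_{++}^n}^{\gamma_1,\gamma_2}(x,y)$ with $\gamma_1\innerprod{y}{Jy}^rD_h(x,y)\ge0$, where nonnegativity uses convexity of $h$; since the quadratic term $\frac{\gamma_2}{2}\|\cdot-y\|^2$ is supercoercive and vanishes only at $y$, $D_{\mathbb{L}_{++}^n}^{\gamma_1,\gamma_2}$ is a prox-grad distance and moreover $D_k(x,y)\ge\frac{\gamma_{\min}}{2}\|x-y\|^2$ for all $x\in\mathbb{R}^n$, $y\in\mathbb{L}_{++}^n$, $k\ge0$. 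This gives Assumptions~\ref{assume:well-definedness}~(v) and \ref{assume:local-error-bound} ($\nu=1$, $\alpha'=\gamma_{\min}/2$); since $g+\gamma D_k(\cdot,y)\ge0$ for every $\gamma>0$, the prox-boundedness threshold is $0<1$, giving Assumption~\ref{assume:well-definedness}~(iv); the rest of Assumption~\ref{assume:well-definedness} is immediate from the hypotheses, noting $\dom F=\dom f\cap\mathbb{L}_+^n\subset\interior\dom f$ and $\dom g=\mathbb{L}_+^n=\closure\mathcal{C}$. Assumption~\ref{assume:inclusion} holds automatically because the term $-\gamma_{1,k}\innerprod{x^k}{Jx^k}^r\log\innerprod{x}{Jx}$ forces the $k$th subproblem objective to $+\infty$ on $\partial\mathbb{L}_+^n$, so its nonempty compact solution set lies in $\mathbb{L}_{++}^n$ (see \citet{auslender2006interior}). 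For Assumption~\ref{assume:inverse-local-error-bound}, given $z\in\mathbb{L}_{++}^n$ choose a small closed ball $\mathcal{N}_z'\subset\mathbb{L}_{++}^n$ around $z$: there $\innerprod{\cdot}{J\cdot}$ is bounded and bounded away from $0$, hence $\nabla h$ is Lipschitz and $\innerprod{y}{Jy}^r$ bounded, so $D_k(x,y)\le L'\|x-y\|^2$ on $\mathcal{N}_z'$ uniformly in $k$. Thus Theorem~\ref{thm:global-convergence-F-stat} applies; and since, by Proposition~\ref{prop:property-GVDPGM}~(iii) and lower semicontinuity of $F$, every accumulation point lies in $\dom F\subset\interior\dom f$, it only remains to treat accumulation points in $\mathbb{L}_+^n\setminus\mathbb{L}_{++}^n$.

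\textbf{Boundary accumulation points.} Let $\{x^k\}_K\to x^*\in\mathbb{L}_+^n\setminus\mathbb{L}_{++}^n$, so $\innerprod{x^*}{Jx^*}=0$; Lemma~\ref{lem:boundedness-GVDPGM} gives $\overline\beta\coloneqq\sup_{k\in K}\beta^{i_k}<\infty$ and $x^{k+1}\to_K x^*$. I would set $\epsilon_k\coloneqq\innerprod{x^k}{Jx^k}^{1/2}>0$ and $\delta^k\coloneqq\epsilon_k e_n$, where $e_n$ is the last coordinate vector; then $\delta^k\to_K0$, and because $e_n$ lies in the interior of the cone one has $x^*+\eta d'+\delta^k\in\mathbb{L}_{++}^n$ whenever $x^*+\eta d'\in\mathbb{L}_+^n$. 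Fix $d\in\mathcal{T}(x^*;\dom F)$; since $x^*\in\interior\dom f$, locally $\dom F=\mathbb{L}_+^n$, so $d\in\mathcal{T}(x^*;\mathbb{L}_+^n)$. For $(d',\eta)$ with $\eta>0$ and $x^*+\eta d'\in\dom F$, put $\zeta^k\coloneqq x^*+\eta d'+\delta^k\in\mathbb{L}_{++}^n$; comparing $x^{k+1}$ with $\zeta^k$ in the $k$th subproblem and using $\beta^{i_k}D_k(x^{k+1},x^k)\ge0$ yields
\[\innerprod{\nabla f(x^k)}{x^{k+1}-x^*-\eta d'-\delta^k}+g(x^{k+1})-g(\zeta^k)\le\beta^{i_k}D_k(\zeta^k,x^k),\]
with $g(x^{k+1})=g(\zeta^k)=0$.

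\textbf{The limiting estimate, and the main obstacle.} The hard part will be bounding $\limsup_{k\to_K\infty}\beta^{i_k}D_k(\zeta^k,x^k)$. Its quadratic part contributes at most $\overline\beta\frac{\gamma_{\max}}{2}\|\zeta^k-x^k\|^2\to_K\overline\beta\frac{\gamma_{\max}}{2}\eta^2\|d'\|^2$. For the barrier part $\beta^{i_k}\gamma_{1,k}\innerprod{x^k}{Jx^k}^rD_h(\zeta^k,x^k)$, expand $D_h(\zeta^k,x^k)=-\log\frac{\innerprod{\zeta^k}{J\zeta^k}}{\innerprod{x^k}{Jx^k}}+2\frac{\innerprod{\zeta^k}{Jx^k}}{\innerprod{x^k}{Jx^k}}-2$: by the choice of $\delta^k$, $\innerprod{\zeta^k}{J\zeta^k}\ge\epsilon_k^2=\innerprod{x^k}{Jx^k}$, so the first term multiplied by $\innerprod{x^k}{Jx^k}^r$ is $\le0$; the second multiplied by $\innerprod{x^k}{Jx^k}^r$ equals $2\innerprod{x^k}{Jx^k}^{r-1}\innerprod{\zeta^k}{Jx^k}$, which tends to $0$ along $K$ exactly because $r>1$ makes $\innerprod{x^k}{Jx^k}^{r-1}\to_K0$ while $\innerprod{\zeta^k}{Jx^k}\to_K\innerprod{x^*+\eta d'}{Jx^*}$ stays bounded; and $-2\innerprod{x^k}{Jx^k}^r\to_K0$. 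Hence $\limsup_{k\to_K\infty}\innerprod{x^k}{Jx^k}^rD_h(\zeta^k,x^k)\le0$, and (using $D_h\ge0$ and $\beta^{i_k}\gamma_{1,k}\le\overline\beta\gamma_{\max}$) the barrier part contributes at most $0$, so $\limsup_{k\to_K\infty}\beta^{i_k}D_k(\zeta^k,x^k)\le\frac{\overline\beta\gamma_{\max}}{2}\eta^2\|d'\|^2$. Passing to the limit along $K$ in the displayed inequality (via continuity of $\nabla f$ on $\interior\dom f$, $x^{k+1}\to_Kx^*$, $\delta^k\to_K0$), then rearranging, dividing by $\eta$ and taking the lower limit $(d',\eta)\to(d,0)$, gives $F'(x^*;d)=\innerprod{\nabla f(x^*)}{d}+g'(x^*;d)\ge0$; as $g'(x^*;d)=0$ for $d\in\mathcal{T}(x^*;\mathbb{L}_+^n)$, this holds for all $d\in\mathcal{T}(x^*;\dom F)$, so $x^*$ is F-stationary. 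The genuinely delicate point, absent from the separable orthant case, is the coupling of the perturbation scale $\epsilon_k=\innerprod{x^k}{Jx^k}^{1/2}$ with the exponent condition $r>1$: together these are precisely what tame the blow-up of the logarithmic barrier $D_h(\zeta^k,x^k)$ as $x^k$ approaches $\partial\mathbb{L}_+^n$. The same blueprint, with $h(X)=-\log\det X$ and perturbation direction $I_n$, proves Theorem~\ref{thm:PSD-cone}.
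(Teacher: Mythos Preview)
Your proof is correct and follows essentially the same approach as the paper's: verify the standing assumptions so that Theorem~\ref{thm:global-convergence-F-stat} handles interior accumulation points, then for boundary points compare $x^{k+1}$ with an interiorly perturbed feasible point and exploit $r>1$ to kill the barrier contribution before passing to the limit. The only notable variation is your perturbation scale $\epsilon_k=\innerprod{x^k}{Jx^k}^{1/2}$ (the paper uses $\innerprod{x^k}{Jx^k}$), which buys the clean inequality $\innerprod{\zeta^k}{J\zeta^k}\ge\innerprod{x^k}{Jx^k}$ so that the log term is $\le 0$ outright rather than having to be shown to vanish term by term.
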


\section{Conclusion}\label{sec:conclusion}
In the paper, we have conducted convergence analyses of proximal gradient-type methods without global descent lemma.
Thanks to our results, it become possible to apply a new proximal gradient-type method with a general proximal term to the trimmed logistic regression problem.
As a byproduct, new convergence results of the interior gradient methods for conic optimization are also provided.
We hope that our results will lead to the development of new proximal gradient-type methods.

We have conducted the convergence analysis in the presence of the KL property only for the case of the monotone Armijo condition ($p=1$).
By utilizing the techniques from a very recent paper by \citet{kanzow2025convergence}, it may be possible to derive a similar result for the nonmonotone case as well.
When the interior gradient methods are considered, our analysis under the KL assumption (Theorem \ref{thm:KL}) cannot account for the boundary.
Solving this issue is an important challenge.

\section*{Acknowledgments}
The first author was supported partly by the JSPS Grant-in-Aid for Early-Career Scientists 25K21158.
The second author was supported partly by the JSPS Grant-in-Aid for Early-Career Scientists 25K15010.
The authors would like to thank Akifumi Okuno, Hironori Fujisawa, and Keisuke Yano for their helpful comments.

\appendix
\def\thesection{Appendix \Alph{section}}

\section{Proofs}\label{sec:proofs}
In this section, we provide the proofs omitted in the main body of this paper.

\begin{proof}[Proof of Theorem \ref{thm:PSD-cone}]
From the assumptions of Theorem \ref{thm:PSD-cone}, setting $g=\delta_{\mathbb{S}_+^n}$ and $\mathcal{C}=\mathbb{S}_{++}^n$, Assumption \ref{assume:well-definedness} (i)--(iv) holds.
As we see that
\begin{equation}
    D_k(X,Y)=D_{\mathbb{S}_{++}^n}^{\gamma_{1,k},\gamma_{2,k}}(X,Y)\ge\frac{\gamma_{\min}}{2}\|X-Y\|^2
\end{equation}
for any $X\in\mathbb{S}^n, Y\in\mathbb{S}_{++}^n$, Assumption \ref{assume:local-error-bound} is satisfied (and thus, Assumption \ref{assume:well-definedness} (v) is as well).
On the other hand, for any $Z\in\mathbb{S}_{++}^n$, there exists a bounded neighborhood $\mathcal{N}_Z'\subset\mathbb{S}_{++}^n$ of $Z$ and a positive constant $\tilde{L}$ such that
\begin{equation}
    -\log\frac{\det(X)}{\det(Y)}+\innerprod{X}{Y^{-1}}-n=-\log\frac{\det(X)}{\det(Y)}+\innerprod{Y^{-1}}{X-Y}\le\frac{\tilde{L}}{2}\|X-Y\|^2
\end{equation}
for all $X,Y\in\mathcal{N}_Z'$, because the gradient of $X\mapsto-\log\det(X)$ is locally Lipschitz continuous on $\mathbb{S}_{++}^n$.
Accordingly, it holds that
\begin{align}
    D_{\mathbb{S}_{++}^n}^{\gamma_{1,k},\gamma_{2,k}}(X,Y) &\le\gamma_{\max}\left(\sup_{Y\in\mathcal{N}_Z'}\det(Y)^r\right)\left(-\log\frac{\det(X)}{\det(Y)}+\innerprod{X}{Y^{-1}}-n\right)+\frac{\gamma_{\max}}{2}\|X-Y\|^2\\
    &\le\frac{\gamma_{\max}}{2}\left(1+\tilde{L}\sup_{Y\in\mathcal{N}_Z'}\det(Y)^r\right)\|X-Y\|^2,
\end{align}
which implies Assumption \ref{assume:inverse-local-error-bound}.

Let $\{X^k\}_K$ be a subsequence of $\{X^k\}$ converging to some point $X^*$.
It follows from Proposition \ref{prop:property-GVDPGM} (iii) and the lower semicontinuity of $f+\delta_{\mathbb{S}_+^n}$ that $X^*\in\dom f\cap\mathbb{S}_+^n\subset\mathbb{S}_+^n$.
If $X^*\in\mathbb{S}_{++}^n$ holds, the stationarity of $X^*$ is established from Theorem \ref{thm:global-convergence-F-stat}.

The remaining part of this proof assumes that $X^*\in\mathbb{S}_+^n\setminus\mathbb{S}_{++}^n$.
Note that $\det(X^k)\to_K0$.
We define $\{\Delta^k\}\subset\mathbb{S}_{++}^n$ as $\Delta^k\coloneqq\lambda_{\min}(X^k)I$, where $\lambda_{\min}(X^k)$ is the smallest eigenvalue of $X^k$ and $I$ is the identity matrix, then $\Delta^k\to_K0$.
Note that $\{\beta^{i_k}\}_K$ is bounded and $\|X^{k+1}-X^k\|\to_K0$ by Lemma \ref{lem:boundedness-GVDPGM}.
Let $D\in\mathcal{T}(X^*;\dom f\cap\mathbb{S}_+^n)$ be fixed.
For any $(D',\eta)$ satisfying $\eta>0$ and $X^*+\eta D'\in\dom f\cap\mathbb{S}_+^n\subset\mathbb{S}_+^n$, from the optimality of $X^{k+1}$ to the subproblem, we have
\begin{align}
    \innerprod{\nabla f(X^k)}{X^{k+1}} &\le\innerprod{\nabla f(X^k)}{X^{k+1}}+\beta^{i_k}D_k(X^{k+1},X^k)\\
    &\le\innerprod{\nabla f(X^k)}{X^*+\eta D'+\Delta^k}+\beta^{i_k}D_k(X^*+\eta D'+\Delta^k,X^k).
\end{align}
We note that $X^*+\eta D'+\Delta^k\in\mathbb{S}_{++}^n$ holds.
Rearranging the above yields
\begin{align}\label{eq:optimality-PSD-cone}
\begin{split}
    &\innerprod{\nabla f(X^k)}{X^{k+1}-X^*-\eta D'-\Delta^k}\\
    &\le\overline{\beta}\gamma_{\max}\det(X^k)^r\left(-\log\frac{\det(X^*+\eta D'+\Delta^k)}{\det(X^k)}+\innerprod{X^*+\eta D'+\Delta^k}{(X^k)^{-1}}-n\right)\\
    &\quad+\frac{\overline{\beta}\gamma_{\max}}{2}\|X^*+\eta D'+\Delta^k-X^k\|^2,
\end{split}
\end{align}
where $\overline{\beta}\coloneqq\sup_{k\in K}\beta^{i_k}<\infty$.
It holds that
\begin{align}
    &\det(X^k)^r\left(-\log\frac{\det(X^*+\eta D'+\Delta^k)}{\det(X^k)}+\innerprod{X^*+\eta D'+\Delta^k}{(X^k)^{-1}}-n\right)\\
    &\le-\det(X^k)^r\log\det(X^*+\eta D'+\Delta^k)+\det(X^k)^r\log\det(X^k)\\
    &\quad+\det(X^k)^r\|X^*+\eta D'+\Delta^k\|\|(X^k)^{-1}\|-n\det(X^k)^r\\
    &\le-\det(X^k)^r\log\det(X^*+\eta D'+\Delta^k)+\det(X^k)^r\log\det(X^k)\\
    &\quad+\det(X^k)^r\|X^*+\eta D'+\Delta^k\|\frac{\sqrt{n}}{\lambda_{\min}(X^k)}-n\det(X^k)^r\\
    &=-\det(X^k)^r\left(\sum_{j=1}^n\log(\lambda_j+\lambda_{\min}(X^k))\right)+\det(X^k)^r\log\det(X^k)\\
    &\quad+\det(X^k)^r\|X^*+\eta D'+\Delta^k\|\frac{\sqrt{n}}{\lambda_{\min}(X^k)}-n\det(X^k)^r\\
    &\to_K0,
\end{align}
where $\lambda_j$ is the $j$th largest eigenvalue of $X^*+\eta D'$.
Consequently, taking the limit $k\to_K\infty$ of \eqref{eq:optimality-PSD-cone} yields
\begin{equation}
    \innerprod{\nabla f(X^*)}{-\eta D'}\le\frac{\overline{\beta}\gamma_{\max}}{2}\|\eta D'\|^2
\end{equation}
because $\nabla f$ is continuous on $\dom f\cap\mathbb{S}_+^n\subset\interior\dom f$.
Rearranging this, dividing both sides by $\eta$, and taking the limit $(D',\eta)\to(D,0)$ give
\begin{align}
    f'(X^*;D)=\innerprod{\nabla f(X^*)}{D}\ge0,
\end{align}
which implies that $X^*$ is an F-stationary point.
\end{proof}

\begin{proof}[Proof of Theorem \ref{thm:SO-cone}]
From the assumptions of Theorem \ref{thm:SO-cone}, setting $g=\delta_{\mathbb{L}_+^n}$ and $\mathcal{C}=\mathbb{L}_{++}^n$, Assumption \ref{assume:well-definedness} (i)--(iv) holds.
As we see that
\begin{equation}
    D_k(x,y)=D_{\mathbb{L}_{++}^n}^{\gamma_{1,k},\gamma_{2,k}}(x,y)\ge\frac{\gamma_{\min}}{2}\|x-y\|^2
\end{equation}
for any $x\in\mathbb{R}^n, y\in\mathbb{L}_{++}^n$, Assumption \ref{assume:local-error-bound} is satisfied (and thus, Assumption \ref{assume:well-definedness} (v) is as well).
On the other hand, for any $z\in\mathbb{L}_{++}^n$, there exists a bounded neighborhood $\mathcal{N}_z'\subset\mathbb{L}_{++}^n$ of $z$ and a positive constant $\tilde{L}$ such that
\begin{equation}
    -\log\frac{\innerprod{x}{Jx}}{\innerprod{y}{Jy}}+2\frac{\innerprod{x}{Jy}}{\innerprod{y}{Jy}}-2=-\log\frac{\innerprod{x}{Jx}}{\innerprod{y}{Jy}}+2\frac{\innerprod{Jy}{x-y}}{\innerprod{y}{Jy}}\le\frac{\tilde{L}}{2}\|x-y\|^2
\end{equation}
for all $x,y\in\mathcal{N}_z'$, because the gradient of $x\mapsto-\log\innerprod{x}{Jx}$ is locally Lipschitz continuous on $\mathbb{L}_{++}^n$.
Accordingly, it holds that
\begin{align}
    D_{\mathbb{L}_{++}^n}^{\gamma_{1,k},\gamma_{2,k}}(x,y) &\le\gamma_{\max}\left(\sup_{y\in\mathcal{N}_z'}\innerprod{y}{Jy}^r\right)\left(-\log\frac{\innerprod{x}{Jx}}{\innerprod{y}{Jy}}+2\frac{\innerprod{x}{Jy}}{\innerprod{y}{Jy}}-2\right)+\frac{\gamma_{\max}}{2}\|x-y\|^2\\
    &\le\frac{\gamma_{\max}}{2}\left(1+\tilde{L}\sup_{y\in\mathcal{N}_z'}\innerprod{y}{Jy}^r\right)\|x-y\|^2,
\end{align}
which implies Assumption \ref{assume:inverse-local-error-bound}.

Let $\{x^k\}_K$ be a subsequence of $\{x^k\}$ converging to some point $x^*$.
It follows from Proposition \ref{prop:property-GVDPGM} (iii) and the lower semicontinuity of $f+\delta_{\mathbb{L}_+^n}$ that $x^*\in\dom f\cap\mathbb{L}_+^n\subset\mathbb{L}_+^n$.
If $x^*\in\mathbb{L}_{++}^n$ holds, the stationarity of $x^*$ is established from Theorem \ref{thm:global-convergence-F-stat}.

The remaining part of this proof assumes that $x^*\in\mathbb{L}_+^n\setminus\mathbb{L}_{++}^n$.
Note that $\innerprod{x^k}{Jx^k}\to_K0$.
We define $\{\delta^k\}\subset\mathbb{L}_{++}^n$ as $\delta^k\coloneqq(0,\ldots,0,\innerprod{x^k}{Jx^k})^\top$, then $\delta^k\to_K0$.
Note that $\{\beta^{i_k}\}_K$ is bounded and $\|x^{k+1}-x^k\|\to_K0$ by Lemma \ref{lem:boundedness-GVDPGM}.
Let $d\in\mathcal{T}(x^*;\dom f\cap\mathbb{L}_+^n)$ be fixed.
For any $(d',\eta)$ satisfying $\eta>0$ and $x^*+\eta d'\in\dom f\cap\mathbb{L}_+^n\subset\mathbb{L}_+^n$, from the optimality of $x^{k+1}$ to the subproblem, we have
\begin{align}
    \innerprod{\nabla f(x^k)}{x^{k+1}} &\le\innerprod{\nabla f(x^k)}{x^{k+1}}+\beta^{i_k}D_k(x^{k+1},x^k)\\
    &\le\innerprod{\nabla f(x^k)}{x^*+\eta d'+\delta^k}+\beta^{i_k}D_k(x^*+\eta d'+\delta^k,x^k).
\end{align}
We note that $x^*+\eta d'+\delta^k\in\mathbb{L}_{++}^n$ holds.
Rearranging the above yields
\begin{align}\label{eq:optimality-SO-cone}
\begin{split}
    &\innerprod{\nabla f(x^k)}{x^{k+1}-x^*-\eta d'-\delta^k}\\
    &\le\overline{\beta}\gamma_{\max}\innerprod{x^k}{Jx^k}^r\left(-\log\frac{\innerprod{x^*+\eta d'+\delta^k}{J(x^*+\eta d'+\delta^k)}}{\innerprod{x^k}{Jx^k}}+2\frac{\innerprod{x^*+\eta d'+\delta^k}{Jx^k}}{\innerprod{x^k}{Jx^k}}-2\right)\\
    &\quad+\frac{\overline{\beta}\gamma_{\max}}{2}\|x^*+\eta d'+\delta^k-x^k\|^2,
\end{split}
\end{align}
where $\overline{\beta}\coloneqq\sup_{k\in K}\beta^{i_k}<\infty$.
It holds that
\begin{align}
    &\innerprod{x^k}{Jx^k}^r\left(-\log\frac{\innerprod{x^*+\eta d'+\delta^k}{J(x^*+\eta d'+\delta^k)}}{\innerprod{x^k}{Jx^k}}+2\frac{\innerprod{x^*+\eta d'+\delta^k}{Jx^k}}{\innerprod{x^k}{Jx^k}}-2\right)\\
    &=-\innerprod{x^k}{Jx^k}^r\log\frac{\innerprod{x^*+\eta d'}{J(x^*+\eta d')}+2(x^*_n+\eta d'_n)\innerprod{x^k}{Jx^k}+\innerprod{x^k}{Jx^k}^2}{\innerprod{x^k}{Jx^k}}\\
    &\quad+2\innerprod{x^k}{Jx^k}^{r-1}\innerprod{x^*+\eta d'+\delta^k}{Jx^k}-2\innerprod{x^k}{Jx^k}^r\\
    &\to_K0.
\end{align}
Consequently, taking the limit $k\to_K\infty$ of \eqref{eq:optimality-SO-cone} yields
\begin{equation}
    \innerprod{\nabla f(x^*)}{-\eta d'}\le\frac{\overline{\beta}\gamma_{\max}}{2}\|\eta d'\|^2
\end{equation}
because $\nabla f$ is continuous on $\dom f\cap\mathbb{L}_+^n\subset\interior\dom f$.
Rearranging this, dividing both sides by $\eta$, and taking the limit $(d',\eta)\to(d,0)$ give
\begin{align}
    f'(x^*;d)=\innerprod{\nabla f(x^*)}{d}\ge0,
\end{align}
which implies that $x^*$ is an F-stationary point.
\end{proof}

\bibliography{reference.bib}

\begin{thebibliography}{52}
\providecommand{\natexlab}[1]{#1}
\providecommand{\url}[1]{\texttt{#1}}
\expandafter\ifx\csname urlstyle\endcsname\relax
  \providecommand{\doi}[1]{doi: #1}\else
  \providecommand{\doi}{doi: \begingroup \urlstyle{rm}\Url}\fi

\bibitem[Ahn et~al.(2017)Ahn, Pang, and Xin]{ahn2017difference}
Miju Ahn, Jong-Shi Pang, and Jack Xin.
\newblock Difference-of-convex learning: directional stationarity, optimality, and sparsity.
\newblock \emph{SIAM Journal on Optimization}, 27\penalty0 (3):\penalty0 1637--1665, 2017.

\bibitem[Amir et~al.(2021)Amir, Basri, and Nadler]{amir2021trimmed}
Tal Amir, Ronen Basri, and Boaz Nadler.
\newblock The trimmed lasso: Sparse recovery guarantees and practical optimization by the generalized soft-min penalty.
\newblock \emph{SIAM Journal on Mathematics of Data Science}, 3\penalty0 (3):\penalty0 900--929, 2021.

\bibitem[Attouch and Teboulle(2004)]{attouch2004regularized}
Hedy Attouch and Marc Teboulle.
\newblock Regularized {Lotka--Volterra} dynamical system as continuous proximal-like method in optimization.
\newblock \emph{Journal of optimization theory and applications}, 121:\penalty0 541--570, 2004.

\bibitem[Attouch et~al.(2010)Attouch, Bolte, Redont, and Soubeyran]{attouch2010proximal}
H{\'e}dy Attouch, J{\'e}r{\^o}me Bolte, Patrick Redont, and Antoine Soubeyran.
\newblock Proximal alternating minimization and projection methods for nonconvex problems: An approach based on the kurdyka-{\l}ojasiewicz inequality.
\newblock \emph{Mathematics of operations research}, 35\penalty0 (2):\penalty0 438--457, 2010.

\bibitem[Attouch et~al.(2013)Attouch, Bolte, and Svaiter]{attouch2013convergence}
Hedy Attouch, J{\'e}r{\^o}me Bolte, and Benar~Fux Svaiter.
\newblock Convergence of descent methods for semi-algebraic and tame problems: proximal algorithms, forward--backward splitting, and regularized gauss--seidel methods.
\newblock \emph{Mathematical programming}, 137\penalty0 (1):\penalty0 91--129, 2013.

\bibitem[Auslender and Teboulle(2004)]{auslender2004interior}
Alfred Auslender and Marc Teboulle.
\newblock Interior gradient and epsilon-subgradient descent methods for constrained convex minimization.
\newblock \emph{Mathematics of Operations Research}, 29\penalty0 (1):\penalty0 1--26, 2004.

\bibitem[Auslender and Teboulle(2006)]{auslender2006interior}
Alfred Auslender and Marc Teboulle.
\newblock Interior gradient and proximal methods for convex and conic optimization.
\newblock \emph{SIAM Journal on Optimization}, 16\penalty0 (3):\penalty0 697--725, 2006.

\bibitem[Auslender et~al.(1999{\natexlab{a}})Auslender, Teboulle, and Ben-Tiba]{auslender1999interior}
Alfred Auslender, Marc Teboulle, and Sami Ben-Tiba.
\newblock Interior proximal and multiplier methods based on second order homogeneous kernels.
\newblock \emph{Mathematics of Operations Research}, 24\penalty0 (3):\penalty0 645--668, 1999{\natexlab{a}}.

\bibitem[Auslender et~al.(1999{\natexlab{b}})Auslender, Teboulle, and Ben-Tiba]{auslender1999logarithmic}
Alfred Auslender, Marc Teboulle, and Sami Ben-Tiba.
\newblock A logarithmic-quadratic proximal method for variational inequalities.
\newblock \emph{Computational Optimization: A Tribute to Olvi Mangasarian Volume I}, pages 31--40, 1999{\natexlab{b}}.

\bibitem[Bauschke et~al.(2017)Bauschke, Bolte, and Teboulle]{bauschke2017descent}
Heinz~H Bauschke, J{\'e}r{\^o}me Bolte, and Marc Teboulle.
\newblock A descent lemma beyond lipschitz gradient continuity: first-order methods revisited and applications.
\newblock \emph{Mathematics of Operations Research}, 42\penalty0 (2):\penalty0 330--348, 2017.

\bibitem[Beck(2017)]{beck2017first}
Amir Beck.
\newblock \emph{First-order methods in optimization}.
\newblock SIAM, 2017.

\bibitem[Becker and Fadili(2012)]{becker2012quasi}
Stephen Becker and Jalal Fadili.
\newblock A quasi-newton proximal splitting method.
\newblock \emph{Advances in neural information processing systems}, 25, 2012.

\bibitem[Bertsekas(1999)]{bertsekas1999nonlinear}
Dimitri~P Bertsekas.
\newblock \emph{Nonlinear Programming}.
\newblock Athena scientific, 2nd edition, 1999.

\bibitem[Bolte and Teboulle(2003)]{bolte2003barrier}
J{\'e}r{\^o}me Bolte and Marc Teboulle.
\newblock Barrier operators and associated gradient-like dynamical systems for constrained minimization problems.
\newblock \emph{SIAM journal on control and optimization}, 42\penalty0 (4):\penalty0 1266--1292, 2003.

\bibitem[Bolte et~al.(2007)Bolte, Daniilidis, and Lewis]{bolte2007KL}
J\'{e}r\^{o}me Bolte, Aris Daniilidis, and Adrian Lewis.
\newblock The {\l}ojasiewicz inequality for nonsmooth subanalytic functions with applications to subgradient dynamical systems.
\newblock \emph{SIAM Journal on Optimization}, 17\penalty0 (4):\penalty0 1205--1223, 2007.

\bibitem[Bolte et~al.(2014)Bolte, Sabach, and Teboulle]{bolte2014proximal}
J{\'e}r{\^o}me Bolte, Shoham Sabach, and Marc Teboulle.
\newblock Proximal alternating linearized minimization for nonconvex and nonsmooth problems.
\newblock \emph{Mathematical Programming}, 146\penalty0 (1):\penalty0 459--494, 2014.

\bibitem[Bolte et~al.(2018)Bolte, Sabach, Teboulle, and Vaisbourd]{bolte2018first}
J{\'e}r{\^o}me Bolte, Shoham Sabach, Marc Teboulle, and Yakov Vaisbourd.
\newblock First order methods beyond convexity and {L}ipschitz gradient continuity with applications to quadratic inverse problems.
\newblock \emph{SIAM Journal on Optimization}, 28\penalty0 (3):\penalty0 2131--2151, 2018.

\bibitem[Bonettini et~al.(2016)Bonettini, Loris, Porta, and Prato]{bonettini2016variable}
Silvia Bonettini, Ignace Loris, Federica Porta, and Marco Prato.
\newblock Variable metric inexact line-search-based methods for nonsmooth optimization.
\newblock \emph{SIAM journal on optimization}, 26\penalty0 (2):\penalty0 891--921, 2016.

\bibitem[Chen and Rockafellar(1997)]{chen1997convergence}
George~HG Chen and R~Tyrrell Rockafellar.
\newblock Convergence rates in forward--backward splitting.
\newblock \emph{SIAM Journal on Optimization}, 7\penalty0 (2):\penalty0 421--444, 1997.

\bibitem[De~Marchi(2023)]{de2023proximal}
Alberto De~Marchi.
\newblock Proximal gradient methods beyond monotony.
\newblock \emph{Journal of Nonsmooth Analysis and Optimization}, 4\penalty0 (Original research articles), 2023.

\bibitem[De~Marchi and Themelis(2022)]{de2022proximal}
Alberto De~Marchi and Andreas Themelis.
\newblock Proximal gradient algorithms under local lipschitz gradient continuity: A convergence and robustness analysis of panoc.
\newblock \emph{Journal of Optimization Theory and Applications}, 194\penalty0 (3):\penalty0 771--794, 2022.

\bibitem[Eggermont(1990)]{eggermont1990multiplicative}
PPB Eggermont.
\newblock Multiplicative iterative algorithms for convex programming.
\newblock \emph{Linear Algebra and its Applications}, 130:\penalty0 25--42, 1990.

\bibitem[Fukushima and Mine(1981)]{fukushima1981generalized}
Masao Fukushima and Hisashi Mine.
\newblock A generalized proximal point algorithm for certain non-convex minimization problems.
\newblock \emph{International Journal of Systems Science}, 12\penalty0 (8):\penalty0 989--1000, 1981.

\bibitem[Gafni and Bertsekas(1982)]{gafni1982convergence}
Eli~M Gafni and Dimitri~P Bertsekas.
\newblock Convergence of a gradient projection method.
\newblock \emph{Technical report LIDS-P-1201}, 1982.

\bibitem[Gotoh et~al.(2018)Gotoh, Takeda, and Tono]{gotoh2018dc}
Jun-ya Gotoh, Akiko Takeda, and Katsuya Tono.
\newblock Dc formulations and algorithms for sparse optimization problems.
\newblock \emph{Mathematical Programming}, 169\penalty0 (1):\penalty0 141--176, 2018.

\bibitem[Hua and Yamashita(2016)]{hua2016block}
Xiaoqin Hua and Nobuo Yamashita.
\newblock Block coordinate proximal gradient methods with variable bregman functions for nonsmooth separable optimization.
\newblock \emph{Mathematical Programming}, 160\penalty0 (1):\penalty0 1--32, 2016.

\bibitem[Huang et~al.(2015)Huang, Liu, Shi, Van~Huffel, and Suykens]{huang2015two}
Xiaolin Huang, Yipeng Liu, Lei Shi, Sabine Van~Huffel, and Johan~AK Suykens.
\newblock Two-level $\ell_1$ minimization for compressed sensing.
\newblock \emph{Signal Processing}, 108:\penalty0 459--475, 2015.

\bibitem[Iusem(1995)]{iusem1995interior}
Alfredo~N Iusem.
\newblock An interior point multiplicative method for optimization under positivity constraints.
\newblock \emph{Acta Applicandae Mathematica}, 38:\penalty0 163--184, 1995.

\bibitem[Iusem et~al.(1996)Iusem, Svaiter, and Teboulle]{iusem1996multiplicative}
Alfredo~N Iusem, BF~Svaiter, and Marc Teboulle.
\newblock Multiplicative interior gradient methods for minimization over the nonnegative orthant.
\newblock \emph{SIAM journal on control and optimization}, 34\penalty0 (1):\penalty0 389--406, 1996.

\bibitem[Jia et~al.(2023)Jia, Kanzow, and Mehlitz]{jia2023convergence}
Xiaoxi Jia, Christian Kanzow, and Patrick Mehlitz.
\newblock Convergence analysis of the proximal gradient method in the presence of the kurdyka--{\l}ojasiewicz property without global lipschitz assumptions.
\newblock \emph{SIAM Journal on Optimization}, 33\penalty0 (4):\penalty0 3038--3056, 2023.

\bibitem[Kanzow and Lehmann(2025)]{kanzow2025convergence}
Christian Kanzow and Leo Lehmann.
\newblock Convergence of nonmonotone proximal gradient methods under the {K}urdyka--{\l}ojasiewicz property without a global {L}ipschitz assumption.
\newblock \emph{Journal of Optimization Theory and Applications}, 207\penalty0 (1):\penalty0 4, 2025.

\bibitem[Kanzow and Mehlitz(2022)]{kanzow2022convergence}
Christian Kanzow and Patrick Mehlitz.
\newblock Convergence properties of monotone and nonmonotone proximal gradient methods revisited.
\newblock \emph{Journal of Optimization Theory and Applications}, 195\penalty0 (2):\penalty0 624--646, 2022.

\bibitem[Lee and Seung(2000)]{lee2000algorithms}
Daniel Lee and H~Sebastian Seung.
\newblock Algorithms for non-negative matrix factorization.
\newblock \emph{Advances in neural information processing systems}, 13, 2000.

\bibitem[Lee et~al.(2014)Lee, Sun, and Saunders]{lee2014proximal}
Jason~D Lee, Yuekai Sun, and Michael~A Saunders.
\newblock Proximal {Newton}-type methods for minimizing composite functions.
\newblock \emph{SIAM Journal on Optimization}, 24\penalty0 (3):\penalty0 1420--1443, 2014.

\bibitem[Li and Pong(2017)]{li2017KL}
Guoyin Li and Ting~Kei Pong.
\newblock Calculus of the exponent of kurdyka-{\l}ojasiewicz inequality and its applications to linear convergence of first-order methods.
\newblock \emph{Foundations of Computational Mathematics}, 18:\penalty0 1199--1232, 2017.

\bibitem[Lu et~al.(2018)Lu, Freund, and Nesterov]{lu2018relatively}
Haihao Lu, Robert~M Freund, and Yurii Nesterov.
\newblock Relatively smooth convex optimization by first-order methods, and applications.
\newblock \emph{SIAM Journal on Optimization}, 28\penalty0 (1):\penalty0 333--354, 2018.

\bibitem[Lu et~al.(2023)Lu, Li, and Xiang]{lu2023exact}
Zhaosong Lu, Xiaorui Li, and Shuhuang Xiang.
\newblock Exact penalization for cardinality and rank-constrained optimization problems via partial regularization.
\newblock \emph{Optimization Methods and Software}, 38\penalty0 (2):\penalty0 412--433, 2023.

\bibitem[Luo et~al.(2013)Luo, Wang, and Zhang]{luo2013new}
Ziyan Luo, Yingnan Wang, and Xianglilan Zhang.
\newblock New improved penalty methods for sparse reconstruction based on difference of two norms.
\newblock \emph{Technical report}, 2013.
\newblock \doi{10.13140/RG.2.1.3256.3369}.

\bibitem[Rockafellar and Wets(2009)]{rockafellar2009variational}
R~Tyrrell Rockafellar and Roger J-B Wets.
\newblock \emph{Variational analysis}, volume 317.
\newblock Springer Science \& Business Media, 2009.

\bibitem[Rousseeuw(1984)]{rousseeuw1984least}
Peter~J Rousseeuw.
\newblock Least median of squares regression.
\newblock \emph{Journal of the American statistical association}, 79\penalty0 (388):\penalty0 871--880, 1984.

\bibitem[Shepp and Vardi(1982)]{shepp1982maximum}
Larry~A Shepp and Yehuda Vardi.
\newblock Maximum likelihood reconstruction for emission tomography.
\newblock \emph{IEEE transactions on medical imaging}, 1\penalty0 (2):\penalty0 113--122, 1982.

\bibitem[Shiota(1997)]{shiota1997}
Masahiro Shiota.
\newblock \emph{Geometry of Subanalytic and Semialgebraic Sets}.
\newblock Springer Science \& Business Media, 1997.

\bibitem[Sun et~al.(2020)Sun, Cui, Gao, and Wang]{sun2020trimmed}
Hongwei Sun, Yuehua Cui, Qian Gao, and Tong Wang.
\newblock Trimmed lasso regression estimator for binary response data.
\newblock \emph{Statistics \& Probability Letters}, 159:\penalty0 108679, 2020.

\bibitem[Takahashi et~al.(2026)Takahashi, Tanaka, and Ikeda]{takahashi2026majorization}
Shota Takahashi, Mirai Tanaka, and Shiro Ikeda.
\newblock Majorization-minimization bregman proximal gradient algorithms for nmf with the kullback--leibler divergence.
\newblock \emph{Journal of Optimization Theory and Applications}, 208\penalty0 (1):\penalty0 1--34, 2026.

\bibitem[Tseng and Yun(2009)]{tseng2009coordinate}
Paul Tseng and Sangwoon Yun.
\newblock A coordinate gradient descent method for nonsmooth separable minimization.
\newblock \emph{Mathematical Programming}, 117:\penalty0 387--423, 2009.

\bibitem[Van~Nguyen(2017)]{van2017forward}
Quang Van~Nguyen.
\newblock Forward-backward splitting with bregman distances.
\newblock \emph{Vietnam Journal of Mathematics}, 45\penalty0 (3):\penalty0 519--539, 2017.

\bibitem[Vardi et~al.(1985)Vardi, Shepp, and Kaufman]{vardi1985statistical}
Yehuda Vardi, Larry~A Shepp, and Linda Kaufman.
\newblock A statistical model for positron emission tomography.
\newblock \emph{Journal of the American statistical Association}, 80\penalty0 (389):\penalty0 8--20, 1985.

\bibitem[Varian(1975)]{varian1975bayesian}
H~Varian.
\newblock A bayesian approach to real estate assessment.
\newblock \emph{Studies in Bayesian Econometrics and Statistics in Honor of Leonard J/Savage}, 1975.

\bibitem[Yagishita(2024)]{yagishita2024fast}
Shotaro Yagishita.
\newblock Fast algorithm for sparse least trimmed squares via trimmed-regularized reformulation.
\newblock \emph{arXiv preprint arXiv:2410.04554}, 2024.

\bibitem[Yagishita and Gotoh(2025)]{yagishita2025exact}
Shotaro Yagishita and Jun-ya Gotoh.
\newblock Exact penalization at d-stationary points of cardinality-or rank-constrained problem.
\newblock \emph{Optimization}, pages 1--35, 2025.

\bibitem[Zellner(1986)]{zellner1986bayesian}
Arnold Zellner.
\newblock Bayesian estimation and prediction using asymmetric loss functions.
\newblock \emph{Journal of the American Statistical Association}, 81\penalty0 (394):\penalty0 446--451, 1986.

\bibitem[Zhang and Hager(2004)]{zhang2004nonmonotone}
Hongchao Zhang and William~W Hager.
\newblock A nonmonotone line search technique and its application to unconstrained optimization.
\newblock \emph{SIAM journal on Optimization}, 14\penalty0 (4):\penalty0 1043--1056, 2004.

\end{thebibliography}
\bibliographystyle{plainnat}

\end{document}